\newtheorem{thm}{Theorem}[section]
\newtheorem{lem}[thm]{Lemma}
\newtheorem{prop}[thm]{Proposition}
\theoremstyle{remark}
\newtheorem{rem}[thm]{Remark}
\numberwithin{equation}{section}
\newcommand{\beq}{\begin{equation}}
\newcommand{\eeq}{\end{equation}}
\def\real{\hbox{\rm\setbox1=\hbox{I}\copy1\kern-.45\wd1 R}}
\def\prob{\hbox{\rm\setbox1=\hbox{I}\copy1\kern-.45\wd1 P}}
\def\natural{\hbox{\rm\setbox1=\hbox{I}\copy1\kern-.45\wd1 N}}
\def\integers{\hbox{\rm\setbox1=\hbox{-}\copy1\kern-1.2\wd1 Z}}
\newcommand{\B}{\mbox{$\mathcal B$}}
\begin{document}
\title{Joint coboundaries}
\author{Terry Adams and Joseph Rosenblatt}
\date{July, 2016}

\begin{abstract}  We ask under what conditions on the function $f$, and a set of maps $\mathcal T$, it is the case that $f$ is a coboundary for some map in $\mathcal T$.  We also consider for a function $f$, and a set of maps $\mathcal T$, when we have $f$ being a coboundary for all the maps in $\mathcal T$.
\end{abstract}

\maketitle

\section {\bf Introduction}\label{intro}

Let $T:\mathcal F \to \mathcal F$ be a continuous linear mapping where $\mathcal F$ is a Banach space (of functions). Suppose $\mathcal E$ is another Banach space which is a subspace of $\mathcal F$ with $T(\mathcal E) \subset \mathcal E$.  Given $f\in \mathcal E$,
we say that $f$ is a {\em $\tau$-coboundary} with {\em transfer
function} $h \in \mathcal F$ if $f = h - T(h)$.

We are interested in being
able to recognize when $f$ is a $T$-coboundary for different Banach
spaces $\mathcal F$ and $\mathcal E$, and various mappings $T$.  In particular, given
a class of continuous linear maps $\mathcal T$, we study the following:

\begin{enumerate}
\item Given $f$, how large is the set $\mathcal S \subset \mathcal T$ such that $f$ is a $T$-coboundary for every $T \in \mathcal S$?
\item Given $f$, does there always exist some $T\in \mathcal T$ for which $f$ is a $T$-coboundary?
\item  For which $\mathcal S \subset \mathcal T$ does there exist a function $f\in \mathcal E$ which is a $T$-coboundary for every $T\in \mathcal S$?
\end{enumerate}

It is important that we be able to vary the
ambient space $\mathcal F$ containing the transfer function.  For example,
consider $\mathcal E = C(\mathbb T)$ with $\mathbb T$ being the circle group,
and let $\mathcal T$ consist of all
rotations $T(h) = h\circ \tau_\alpha$ where $h \in \mathcal F$, for some
class of functions $\mathcal F$ on $\mathbb T$.  Here $\tau_\alpha$ is the rotation given by
$\tau_\alpha(\beta) = \alpha\beta$ for all $\alpha,\beta \in \mathbb T$.
If we take $f$ to be a
trigonometric polynomial, then for any rotation $\tau_\alpha$ there
is a transfer function $h$ which is itself a trigonometric
polynomial for which $f = h - h \circ\tau_\alpha$.  However, if $f
\in C(\mathbb T)$ and is not a trigonometric polynomial, then with
$\mathcal F = L_1(\mathbb T)$, there is only a first category set of $\alpha$ (possibly empty)
for which the coboundary equation can hold.  It is an open question
whether or not the same thing is true when $\mathcal F$ is allowed to consist
of any Lebesgue measurable function.  See Baggett~\cite{Baggett} and Baggett,
Medina, and Merill~\cite{BMM}.

We want to consider the coboundary questions in a common context that arises in ergodic theory.  Take a standard Lebesgue probability space $(X,\mathcal B,p)$ and a {\em map}
$\tau:X\to X$ i.e. a measure-preserving invertible transformation of $(X,\mathcal B,p)$.
Given $r, 1 \le r \le \infty$ and $1 \le s \le \infty$, we study the class of functions  $f\in L_r(X)$ such that $f$ is a {\em $\tau$-coboundary} with transfer function in $L_s(X)$ i.e.
there exists $h \in L_s(X)$ such that $f = h - h\circ\tau$.

For the most part, we consider the coboundary problem with $r = s$, but there are interesting issues if we allow $s < r$ too.  
When $s < r$ and
$f\in L_r(X)$, then we could possibly solve the coboundary equation with  $\tau$-transfer functions in $L_s(X)$, when we could not solve this equation with transfer function in $L_r(X)$.    The special case where $s=0$ (i.e., the transfer functions are only assumed to be measurable) is 
a very interesting case, just as it is in the problem of Baggett~\cite{Baggett,BMM}.  This type of problem for ergodic maps is discussed some in Section~\ref{gencases}

There have been many articles dealing with coboundaries and their characterizations in various contexts.  In the standard
case of maps, a function $f\in L_r(X)$ is a $\tau$-coboundary with transfer function in $L_r(X)$ if and only if the sums $S_n^\tau f = \sum\limits_{k=0}^{n-1} f\circ \tau^k$,
with $n \ge 1$, are uniformly $L_r$-norm bounded i.e. $\sup\limits_{n \ge 1} \|S_n^\tau f\|_r < \infty$.  See Lin and Sine~\cite{LS} for these
results for all $r, 1 \le r \le \infty$.  Note: the result when $1 < r < \infty$ is quite a bit older, going back to at least
Browder~\cite{B}.

On some level, it is easy to tell if $f$ is a $\tau$-coboundary.  We just compute $\|S_n^\tau f\|_r$ and see if it
is bounded as $n$ varies.  If not, then $f$ is not a $\tau$-coboundary.  If these norms are uniformly bounded,
at least, then  $f$ is a $\tau$-coboundary.  But this is somewhat misleading.  Indeed, is it
so clear that there cannot exist a non-zero function that is a coboundary for all (ergodic) maps?  Also, is it
possible that there is a mean-zero function which is not a coboundary for any (ergodic) map?
This article looks at a variety of questions of this type
related to the coboundary equation . For example, in what circumstances is a
given function $f$ a coboundary for a set $\mathcal T$ of maps (i.e., $\mathcal T$ has a non-trivial common
coboundary)?  If $\tau$ and $\sigma$
commute, then $\mathcal T = \{\tau,\sigma\}$ has a common coboundary.  The same is true in somewhat more general cases (e.g., $\sigma\tau\sigma^{-1}$ is a power of $\tau$).  It is natural to ask what properties of $\sigma$
and $\tau$ are necessary and sufficient for there to be a non-trivial common coboundary.  Although we do not have an explicit example of a pair of maps for which there are no non-trivial common coboundaries, the generic pair of maps have no common coboundaries.  This issue is discussed in Section~\ref{nojoint}.
Additionally, even when $\mathcal T$ consists of mutually commuting maps, it may be that there are no common coboundaries
if $\tau$ varies over all of $\mathcal T$.  We can ask, for instance, when there is a $\tau$-coboundary which is
a $\tau^k$-coboundary for all $k \in \mathbb Z$.  This question is discussed in Section~\ref{powers}.

\section{Role of coboundaries in rate of norm convergence}

There is an aspect of coboundaries that should be kept in mind.
The set of $\tau$-coboundaries  $\mathcal D_r^\tau = \{h - h\circ \tau: h \in L_s(X)\}$ is always a linear space.
The standard case is when $r=s,
1 \le r < \infty$.  Let $\mathcal I^\tau$ denote the invariant functions $\{f\in L_r(X): f\circ \tau = f \}$.
We always have $\mathcal D_r^\tau + \mathcal I^\tau$ is $L_r$-norm dense in $L_r(X)$.   When $\tau$ is ergodic,
$\mathcal I^\tau$ consists just of the constant functions, and $\mathcal D^\tau$ is $L_r$-norm dense in the mean-zero functions
in $L_r(X)$.
At the same time, the category statement in Proposition~\ref{mostfcnsnotcob} shows that the generic function is not a coboundary, a well-known fact at least for transfer functions in $L_1(X)$.

Let $A_n^\tau f$ be the standard ergodic average $\frac 1n\sum\limits_{k=0}^{n-1} f\circ \tau^k$.  It is an interesting aspect of the approximation of mean-zero functions by coboundaries that this is what completely determines the rate that $\|A_n^\tau f\|_r$ tends to zero.  First, take $f \in L_r(X)$
and $h \in L_r(X)$ with $1 \le r < \infty$.  Then
\[\|A_n^\tau f\|_r \le \|A_n^\tau (f - (h-h\circ \tau))\|_r + 2\|h\|_r/n.\]
So
\[\|A_n^\tau f\|_r\le \inf_{\|h\|_r\le L}\|f - (h-h\circ \tau)\|_r + 2L/n.\]
Hence, with $\tau$ ergodic, the approximation of mean-zero $f$ by
coboundaries gives an upper-bound for the rate that $\|A_n^\tau f\|_r$ goes to zero.

On the other hand, let $S_n^\tau f =\sum\limits_{k=0}^{n-1} f\circ \tau^k$.
We can express $f - A_n^\tau f = h - h \circ \tau$ with $h = \frac
1n\sum\limits_{k=1}^n S_k^\tau f$.  Indeed,
\begin{eqnarray*}
f - A_n^\tau f &=&
\frac 1n \sum\limits_{k=0}^{n-1} (f - f\circ \tau^k)\\
&=&\frac 1n \sum\limits_{k=0}^{n-1} (S_k^\tau f - S_k^\tau f\circ \tau)\\
& = & \frac 1n \sum\limits_{k=1}^n S_k^\tau f -
\left (\frac 1n \sum\limits_{k=1}^n S_k^\tau f\right )\circ \tau.
\end{eqnarray*}
Here $\|h\|_r\le \frac 1n\sum\limits_{k=1}^n k\|f\|_r\le n\|f\|_r$. So
\[ \|A_n^\tau f\|_r \ge  \inf_{\|h\|_r\le n\|f\|_r}\|f - (h-h\circ \tau)\|_r.\]
This gives a lower-bound for $ \|A_n^\tau f\|_r$ in terms of how well $f$ is
approximated by $\tau$-coboundaries.

These two bounds are not exactly comparable.  Moreover, of course the value of the norm $\|h\|_r$ of the transfer function plays a role too.  But nonetheless, these estimates show one important traditional role for coboundaries.

The rate that $\|A_n^\tau f\|_r$ tends to zero, is of course directly connected to the size of $S_n^\tau f$.  There are interesting, good results relating to the growth of  the norms of cocycles.  There is a large literature on this subject.  Here are some articles to look at for results: Derriennic~\cite{Derriennic}, Derriennic and Lin~\cite{DL}, Gomilko, Hasse, and Tomilov~\cite{GHT}, and Rosenblatt~\cite{R2014}.  These articles give some evidence that the following holds (or at least it would be interesting if it did not hold).

\medskip

\noindent {\bf Conjecture}: Take $\tau$ ergodic, $1\le r \le \infty$, and
any function $\phi(n) = o(n)$.  Then there is (and one can explicitly construct) a mean-zero $f\in L_\infty(X)$ such that $\|S_n^\tau f\|_r \sim \phi(n)$ as $n\to \infty$.

\medskip

\section{Algebra gives common coboundaries}\label{commute}

The most basic example is when $\tau$ and $\sigma$ commute.  Then for any $h \in L_r(X)$, we have
$(h - h\circ\tau) - (h - h\circ\tau)\circ \sigma = (h - h\circ\sigma) - (h - h\circ\sigma)\circ \tau$.  Hence,
$f = (h - h\circ\tau) - (h - h\circ\tau)\circ \sigma$ is both a $\sigma$-coboundary and a $\tau$-coboundary.

This can be generalized to include algebraic conditions on the group that $\sigma$ and $\tau$ generate.  For example,
suppose $\tau\sigma = \sigma\tau^2$.  Then
\begin{eqnarray*}
(h - h\circ\tau) - (h - h\circ\tau)\circ \sigma &=& (h - h\circ\tau) - (h\circ \sigma - h\circ\tau\circ \sigma)\\
&=& (h - h\circ \tau) - (h\circ\sigma - h\circ \sigma\circ \tau^2)\\
&=&(h - h\circ\sigma -h\circ\sigma\circ\tau) - (h - h\circ\sigma -h\circ\sigma\circ\tau)\circ\tau.
\end{eqnarray*}
Hence, the $\sigma$- coboundary $(h - h\circ\tau) - (h - h\circ\tau)\circ \sigma$ is also a $\tau$-coboundary.

Also, if $\sigma=\tau^n$ for some $n$, then any $\sigma$-coboundary is a $\tau$-coboundary by a collapsing sum argument. For
example, $h - h\circ \tau^2 = h + h\circ \tau - h\circ \tau - h\circ \tau^2 = (h + h\circ \tau)-(h+h\circ \tau)\circ \tau$. But
more generally, if $n$ is allowed to be an integrable function on $X$, then this remains true.  See Dajani~\cite{D} for
the details on this.

\begin{rem}  There are also a number of related results in both Kornfeld~\cite{K} and Kornfeld and Losert~\cite{KL}.
These results are in the direction of finding algebraic conditions on $\mathcal T$ that guarantee there are non-trivial
common coboundaries.  But certainly we are far from having a very general algebraic condition on a set of maps (even just two maps) which guarantee
they do have common coboundaries (in some transfer class), let alone a characterization of this.
\end{rem}

\section{Coboundaries and automatic continuity}\label{automatic}

In this section,
we consider first a related issue: when can we represent all mean-zero functions as sums of coboundaries.  This is
very closely related to the existence of unique invariant means on $L_\infty(X)$.  So for this phenomenon to occur
the maps must be quite far from having any properties such as in Section~\ref{commute}.  So this suggests at least
 the possibility of having explicit maps with no common coboundaries.

For example, consider a finite set $\tau_1,\dots,\tau_m$ of maps for which the integral with respect
to $p$ is the unique invariant mean on $L_\infty(X)$.  In Rosenblatt~\cite{R1991}, it is  shown that
this is equivalent to automatic continuity of $\{\tau_1,\dots,\tau_m\}$-invariant linear functionals of the Lebesgue
spaces $L_r(X), 1 < r \le \infty$.  In particular, this uniqueness property implies that for every mean-zero $f \in L_r(X)$, there exist $h_k,k=1,\dots,m$
in $L_2(X)$ such that $f = \sum\limits_{k=1}^m h_k - h_k\circ \tau_k$ i.e. $f$ is a sum of $\tau_k$-coboundaries.
See also Lind~\cite{Lind} where one obtains this type of representation with commuting maps, by allowing the transfer functions to be measurable.  This clearly shows the impact of the class of the transfer function since this could not happen with commuting maps if the transfer functions were in $L_r(X), r \ge 1$.

From Rosenblatt~\cite{R1981}, using Rosenblatt~\cite{R1991}, we see that we have this particular example.
The toral automorphisms correspond to the unimodular group  (i.e., the integer entry $n\times n$ matrices with determinant
$1$ or $-1$).  Generally, $SL(n,\mathbb Z)$ is the subgroup of matrices whose determinant is $1$, a subgroup of index two in
the unimodular group.

\begin{prop}\label{autoeg} Consider the probability space $(\mathbb T^2,\mathcal B,p)$ where $p$ is the usual Lebesgue measure
on the two-torus $\mathbb T^2$, and $\mathcal B$ is the Lebesgue measurable sets.  Let $1 < r \le \infty$.  Let $\alpha_1(x,y) = (y,x)$ and $\alpha_2(x,y)
= (x,xy)$ for all $(x,y) \in \mathbb T^2$.  Then for all mean-zero $f\in L_r(\mathbb T^2)$, there exist $h_1,h_2 \in L_r(\mathbb T^2)$
such that $f = h_1 - h_1\circ \alpha_1+ h_2 - h_2\circ\alpha_2$.
\end{prop}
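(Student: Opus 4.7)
My plan is to reduce Proposition~\ref{autoeg} to the general framework recalled in the preceding paragraphs: once the pair $\{\alpha_1,\alpha_2\}$ is shown to have unique invariant mean on $L_\infty(\T^2)$ equal to Lebesgue measure, the sum-of-coboundaries representation follows from automatic continuity together with an algebraic Hahn--Banach argument.

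First I would rewrite $\alpha_1,\alpha_2$ in additive coordinates on $\T^2 = \R^2/\Z^2$ as the toral automorphisms given by the matrices
\[
s = \begin{pmatrix}0 & 1 \\ 1 & 0\end{pmatrix}, \qquad t = \begin{pmatrix}1 & 0 \\ 1 & 1\end{pmatrix}.
\]
A direct computation gives $sts = \begin{pmatrix}1 & 1 \\ 0 & 1\end{pmatrix}$; together with $t$ this produces both elementary shears, which are known to generate $SL(2,\Z)$, and then adjoining $s$ (whose determinant is $-1$) yields the full unimodular group $GL(2,\Z)$. So $\{\alpha_1,\alpha_2\}$ generates the unimodular action on $\T^2$, and any mean invariant under the pair is automatically invariant under the whole group. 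By Rosenblatt~\cite{R1981} the only such mean is Lebesgue, so $\{\alpha_1,\alpha_2\}$ has the unique invariant mean property. By Rosenblatt~\cite{R1991} this is equivalent to automatic continuity: for each $1 < r \le \infty$, every $\{\alpha_1,\alpha_2\}$-invariant linear functional on $L_r(\T^2)$ is norm continuous, and hence (by uniqueness of the invariant mean applied to such a functional, decomposing into positive parts where necessary) a scalar multiple of $\int \cdot \, dp$.

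Finally I would run the standard Hahn--Banach argument. Set
\[
E = \bigl\{h_1 - h_1\circ\alpha_1 + h_2 - h_2\circ\alpha_2 : h_1,h_2 \in L_r(\T^2)\bigr\},
\]
a linear subspace of the mean-zero subspace $L_r^0(\T^2)$. If $E \ne L_r^0$, pick $f_0 \in L_r^0 \setminus E$ and extend the (well-defined) linear functional $e + cf_0 \mapsto c$ on $E \oplus \C f_0$ to a linear functional $\Lambda$ on all of $L_r(\T^2)$. Vanishing on $E$ forces $\Lambda(h\circ\alpha_k) = \Lambda(h)$ for $k=1,2$ and every $h\in L_r(\T^2)$, so $\Lambda$ is $\{\alpha_1,\alpha_2\}$-invariant; by the preceding step $\Lambda$ is a scalar multiple of $\int \cdot \, dp$, and since $\int f_0\,dp = 0$ this contradicts $\Lambda(f_0) = 1$. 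Hence $E = L_r^0(\T^2)$, which is the claim.

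The main obstacle is, in my view, the careful invocation of the two Rosenblatt results, especially at the endpoint $r = \infty$: one must ensure that the automatic continuity statement of~\cite{R1991} really applies to \emph{every} (not just positive) invariant linear functional, and that the unique-invariant-mean conclusion of~\cite{R1981} upgrades to ``every continuous invariant functional is a scalar multiple of Lebesgue integration''. Granting that background, the Hahn--Banach step above is entirely routine, and it gives transfer functions in $L_r$ rather than only $L_2$.
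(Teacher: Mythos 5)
Your argument is correct, but it takes a genuinely different route from the paper's. The paper first invokes the uniqueness of the $\{\alpha_1,\alpha_2\}$-invariant mean to obtain, via Rosenblatt~\cite{R1991}, a representation $f=\sum_{k=1}^m h_k-h_k\circ\beta_k$ in which the $\beta_k$ are arbitrary elements of the countable group generated by $\alpha_1$ and $\alpha_2$; the actual work in the paper's proof is then a telescoping identity that rewrites $h-h\circ\beta$, for a word $\beta=\alpha_{i_1}^{e_1}\cdots\alpha_{i_n}^{e_n}$, as a sum of coboundaries for the generators themselves, using $h-h\circ\alpha^{-1}=(-h\circ\alpha^{-1})-(-h\circ\alpha^{-1})\circ\alpha$ to absorb the inverse letters. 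You instead run the Hahn--Banach/automatic-continuity duality directly on the two-generator coboundary subspace $E$: any algebraic linear functional vanishing on $E$ kills $h-h\circ\alpha_k$ for every $h$, hence is $\{\alpha_1,\alpha_2\}$-invariant, hence continuous by~\cite{R1991} and a multiple of $\int\cdot\,dp$ by uniqueness of the invariant mean, which forces $E=L_r^0(\mathbb T^2)$. In effect you re-derive the cited representation theorem in exactly the form needed, bypassing the word-reduction step entirely. Both proofs rest on the same external inputs (Trott~\cite{Trott} for the fact that $\alpha_1,\alpha_2$ generate the unimodular group, and the two Rosenblatt papers for uniqueness of the invariant mean and automatic continuity), and the caveats you flag about $r=\infty$ and about non-positive invariant functionals apply equally to the paper's version, which simply cites those results. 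Your route is arguably cleaner and more self-contained; the paper's telescoping identity has the separate merit of being an explicit algebraic manipulation in the spirit of Section~\ref{commute}, and of making visible why only the generators are ever needed.
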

\begin{proof} The uniqueness of the $\{\alpha_1,\alpha_2\}$-invariant mean implies that for every $f \in L_r(\mathbb T^2)$, there exists $\beta_k,k=1,\dots,m$
in the countable group generated by $\{\alpha_1,\alpha_2\}$  and $h_k, k=1,\dots,m$ in $L_r(\mathbb T^2)$ such that $f = \sum\limits_{k=1}^m h_k - h_k\circ \beta_k$.
But actually each $f_k - f_k\circ \beta_k$ is of the form either $h-h\circ\alpha_1$ or $h - h\circ\alpha_2$.  Indeed, suppose $\beta = \alpha_{i_1}^{e_1}\dots\alpha_{i_n}^{e_n}$
where each $\alpha_{i_j}$ is either $\alpha_1$ or $\alpha_2$ and each $e_j$ is either $1$ or $-1$.   Let
$\gamma_k = \alpha_{i_1}^{e_1}\dots\alpha_{i_k}^{e_k}$ for $k \ge 1$, and $\gamma_0 = 1$.
Then $h - h\circ \beta = \sum\limits_{k=1}^m h\circ \gamma_{k-1} - h\circ \gamma_k = \sum\limits_{k=1}^m h\circ \gamma_{k-1} - (h\circ \gamma_{k-1}) \circ \alpha_{i_k}^{e_k}$.
Notice also that $h - h\circ \alpha^{-1} = (-h\circ \alpha^{-1}) - (-h\circ \alpha^{-1})\circ \alpha$.  Thus, the terms
$h\circ \gamma_{k-1} - (h\circ \gamma_{k-1}) \circ \alpha_{i_k}^{e_k}$ are each of the form either $H - H\circ \alpha_1$ or $H - H\circ \alpha_2$ for some $H \in L_r(\mathbb T^2)$.
Hence, $h - h\circ \beta$ is of the form $h_1 - h_1 \circ \alpha_1+ h_2 - h_2\circ \alpha_2$.
\end{proof}

\begin{rem}\label{notjointerg} It is also probably the case that there are two ergodic mappings $\tau_1,\tau_2$ of $(X,\beta,p)$
on $\mathbb T^2$ with the same property as in Proposition~\ref{autoeg}.  But an example is not known at this time.
\end{rem}

This representation gives the following using the same notation.

\begin{prop}\label{notjoint}  There is a function $f \in L_r(\mathbb T)$ which is an $\alpha_1$-coboundary but not an $\alpha_2$-coboundary.
\end{prop}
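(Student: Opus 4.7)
The plan is to identify a linear functional that must annihilate every $\alpha_2$-coboundary, and then to exhibit an explicit $\alpha_1$-coboundary on which this functional is non-zero. (I read the statement as $f\in L_r(\mathbb T^2)$, since $\alpha_1,\alpha_2$ act on $\mathbb T^2$.)

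First, I would exploit the fact that $\alpha_2(x,y) = (x,xy)$ fixes the first coordinate: for each $x\in\mathbb T$, the restriction of $\alpha_2$ to the fiber $\{x\}\times\mathbb T$ is the rotation $y\mapsto xy$, which preserves one-dimensional Lebesgue measure. Consequently, for any $\alpha_2$-coboundary $f = h - h\circ\alpha_2$ with $h\in L_r(\mathbb T^2)$, Fubini together with the invariance of $dy$ under $y\mapsto xy$ yields
\[\int_{\mathbb T} f(x,y)\,dy = \int_{\mathbb T} h(x,y)\,dy - \int_{\mathbb T} h(x,xy)\,dy = 0\]
for a.e. $x$. Thus the partial $y$-mean $F_f(x) \deq \int_{\mathbb T} f(x,y)\,dy$ vanishes almost everywhere for every $\alpha_2$-coboundary.

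Next, I would produce an $\alpha_1$-coboundary whose partial $y$-mean is not a.e. zero. The natural choice is $h(x,y) = g(x)$ for some non-constant $g\in L_r(\mathbb T)$ (e.g., $g(x) = e^{2\pi i x}$). Since $\alpha_1(x,y) = (y,x)$, we have $h\circ\alpha_1(x,y) = g(y)$, and
\[f(x,y) \deq h(x,y) - h(\alpha_1(x,y)) = g(x) - g(y)\]
is an $\alpha_1$-coboundary by construction. Its partial mean is $F_f(x) = g(x) - \int_{\mathbb T} g$, which is non-constant and hence non-zero on a set of positive measure. Combined with the first step, $f$ cannot be an $\alpha_2$-coboundary, finishing the proof.

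The main conceptual point — which is really the only obstacle — is recognizing the right invariant: the non-ergodicity of $\alpha_2$ hands us the family of continuous linear functionals $f\mapsto\int f(x,y)\,dy$, indexed by $x\in\mathbb T$, every one of which must annihilate $\alpha_2$-coboundaries. Once this is noticed, the rest is forced. Note that this argument is self-contained, does not rely on Proposition~\ref{autoeg}, and yields an effective, explicit example rather than a mere existence statement.
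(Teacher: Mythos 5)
Your proof is correct, but it is a genuinely different argument from the one in the paper. The paper argues indirectly: it invokes Proposition~\ref{autoeg} (every mean-zero function in $L_r(\mathbb T^2)$ is a sum of an $\alpha_1$-coboundary and an $\alpha_2$-coboundary) and observes that if every $\alpha_1$-coboundary were also an $\alpha_2$-coboundary, the $\alpha_2$-coboundaries would exhaust all mean-zero functions, contradicting the fact that the coboundaries of a single map form a first category subspace. That argument is purely existential. Your argument is direct and constructive: since $\alpha_2$ preserves each fiber $\{x\}\times\mathbb T$ and acts there by a measure-preserving rotation, Fubini shows that the conditional expectation $F_f(x)=\int_{\mathbb T} f(x,y)\,dy$ (which is exactly the projection onto the $\alpha_2$-invariant functions $\mathcal I_{\alpha_2}$ that the paper identifies in the proof of its next proposition) annihilates every $\alpha_2$-coboundary with transfer function in $L_1(\mathbb T^2)$, while the explicit $\alpha_1$-coboundary $f(x,y)=g(x)-g(y)$ has $F_f(x)=g(x)-\int g\not\equiv 0$ for non-constant $g$. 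This is the standard fact that coboundaries have zero conditional expectation on the invariant $\sigma$-algebra, applied to the non-ergodic map $\alpha_2$. What your route buys is an explicit example valid for all $1\le r\le\infty$ with no reliance on the automatic-continuity machinery behind Proposition~\ref{autoeg}; what it does not give (and the paper's does) is the sharper conclusion that not even the \emph{mean-zero-on-fibers} $\alpha_1$-coboundaries can all be $\alpha_2$-coboundaries. One small caveat: your Fubini step needs the transfer function to be fiberwise integrable, so it covers transfer functions in $L_r$, $r\ge 1$ (as the proposition requires), but would not rule out a merely measurable transfer function.
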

\begin{proof}  Consider the functions $f = h_1 - h_1\circ \alpha_1$.  If these are always $\alpha_2$-coboundaries, then Proposition~\ref{autoeg} shows that the mean-zero functions
in $L_r(\mathbb T^2)$ would be exactly the $\alpha_2$-coboundaries.  This is impossible because the coboundaries with respect to a single map are always a first category
subspace.
\end{proof}

We are interested in examples of pairs of maps that do not have a common coboundary in some Lebesgue space.  The two maps $\alpha_1$ and $\alpha_2$ above seemed at first
as good candidates to prove this.  However, it turns out to actually be otherwise.  Now, we have seen that some algebraic condition will ensure
this, but the maps above do not satisfy any of those conditions.  In fact, $\alpha_1$ and $\alpha_2$ generate the unimodular group; see Trott~\cite{Trott}.  However,
Proposition~\ref{autoeg} can be used to prove the following.

\begin{prop} The maps $\alpha_1$ and $\alpha_2$ have a non-trivial common coboundary in  $L_2(\mathbb T^2)$ with transfer functions in $L_2(\mathbb T^2)$.
\end{prop}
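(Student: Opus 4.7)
The plan is a proof by contradiction that bootstraps Proposition~\ref{autoeg} into a topological direct sum via the open mapping theorem, and then refutes the resulting closedness using the Lin--Sine characterization of coboundaries. Write $\mathcal D_i = \{h - h\circ\alpha_i : h \in L_2(\mathbb T^2)\}$ for $i = 1, 2$ and let $L_2^0(\mathbb T^2)$ denote the mean-zero subspace. Proposition~\ref{autoeg} asserts precisely that $\mathcal D_1 + \mathcal D_2 = L_2^0(\mathbb T^2)$, so the goal is to show $\mathcal D_1 \cap \mathcal D_2 \neq \{0\}$.

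Suppose for contradiction $\mathcal D_1 \cap \mathcal D_2 = \{0\}$. The bounded linear map $T \colon L_2(\mathbb T^2) \oplus L_2(\mathbb T^2) \to L_2^0(\mathbb T^2)$ defined by $T(h_1, h_2) = (h_1 - h_1\circ\alpha_1) + (h_2 - h_2\circ\alpha_2)$ is surjective by Proposition~\ref{autoeg}, so the open mapping theorem supplies a constant $C > 0$ such that every mean-zero $f$ admits a preimage with $\|h_i\|_2 \le C\|f\|_2$. The components $g_i := h_i - h_i\circ\alpha_i$ then give $f = g_1 + g_2$ with $\|g_i\|_2 \le 2C\|f\|_2$, and by the assumed uniqueness of this decomposition these $g_i$ equal the algebraic projections $\pi_i(f)$. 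Hence $\pi_1$ is a bounded projection onto $\mathcal D_1$, and $\mathcal D_2 = \ker \pi_1$ is closed in $L_2^0(\mathbb T^2)$.

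The main step --- the one I expect a reader to want verified --- is to show that $\mathcal D_2$ is \emph{not} closed, which will give the contradiction. I would take $f_0(x,y) = y$, which is mean-zero. Since $\alpha_2^k(x,y) = (x, x^k y)$, the cocycle sum computes as $S_n^{\alpha_2} f_0(x,y) = y(1 + x + \cdots + x^{n-1})$, so $\|S_n^{\alpha_2} f_0\|_2^2 = n$. Unboundedness of these norms excludes $f_0$ from $\mathcal D_2$ by Lin and Sine~\cite{LS}. On the other hand, the identity displayed in Section~2 expresses $f_0 - A_n^{\alpha_2} f_0 = h_n - h_n\circ\alpha_2$ with $h_n \in L_2(\mathbb T^2)$, and $\|A_n^{\alpha_2} f_0\|_2 = 1/\sqrt{n} \to 0$, so $f_0$ lies in the $L_2$-closure of $\mathcal D_2$. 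This contradicts the closedness derived in the previous paragraph, and so $\mathcal D_1 \cap \mathcal D_2$ must contain a nonzero element. Any such element is a nontrivial common coboundary for $\alpha_1$ and $\alpha_2$ with $L_2$ transfer functions.
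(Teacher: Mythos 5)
Your proof is correct, and although its first half mirrors the paper's strategy, the decisive step is carried out by a genuinely different and more elementary route. Both arguments run by contradiction and combine Proposition~\ref{autoeg} with the open mapping theorem to conclude that, absent a nontrivial common coboundary, $\mathcal D_{\alpha_2}=\{h-h\circ\alpha_2\}$ would be $L_2$-closed: the paper restricts the sum map to $\mathcal I_{\alpha_1}^\perp\times\mathcal I_{\alpha_2}^\perp$ so that it becomes a bijection and hence an isomorphism, whereas you extract bounded algebraic projections from the direct-sum decomposition; these are interchangeable. Where you diverge is in refuting closedness. The paper identifies $\overline{\mathcal D_{\alpha_2}}$ with $\mathcal I_{\alpha_2}^\perp$ and takes $f(x,y)=\phi(y)$ with $\phi$ a mean-zero continuous function on $\mathbb T$ that is not an $L_2$-coboundary for any infinite-order rotation (the Fourier/Diophantine construction of Remark~\ref{rot}), arguing fiber by fiber. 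You instead use the single character $f_0(x,y)=y$: the computation $\|S_n^{\alpha_2}f_0\|_2^2=n$ excludes $f_0$ from $\mathcal D_{\alpha_2}$ by Lin and Sine~\cite{LS}, while $\|A_n^{\alpha_2}f_0\|_2=n^{-1/2}\to 0$ together with the telescoping identity of Section~2 places $f_0$ in the closure. This is self-contained and avoids the auxiliary construction entirely. Note that the paper's fiberwise argument would actually fail for your $f_0$, since $y$ is a $\tau_x$-coboundary on every fiber with transfer function $y/(1-x)$; the obstruction your computation detects is precisely that $x\mapsto 1/(1-x)$ is not square-integrable, a global rather than fiberwise failure. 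Both routes reach the same conclusion, but yours is shorter and needs fewer external inputs, while the paper's exhibits a larger class of functions in $\overline{\mathcal D_{\alpha_2}}\setminus\mathcal D_{\alpha_2}$.
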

\begin{proof} Let $\mathcal I_\alpha = \{f\in L_2(\mathbb T^2): f = f\circ \alpha\}$ and $\mathcal D_\alpha = \{g - g\circ \alpha: f\in L_2^0(\mathbb T^2)\}$.  Then $\mathcal I_\alpha^\perp$
is the $L_2$-norm closure $\overline {\mathcal D_{\alpha}}$ of $\mathcal D_\alpha$.  Also, $L_2(\mathbb T^2) = \mathcal I_{\alpha} + \overline {\mathcal D_{\alpha}}$.  Consider the mapping
$A:\mathcal I_{\alpha_1}^\perp\times \mathcal I_{\alpha_2}^\perp\to L_2^0(\mathbb T^2)$ by $A(h,g) = h-h\circ \alpha_1+g -g \circ \alpha_2$.   Then $A$
would be a continuous linear map.  It is onto from Proposition~\ref{autoeg}.

Assume that
$\alpha_1$ and $\alpha_2$ have no non-trivial common coboundaries in $L_2(\mathbb T^2)$ with transfer functions in $L_2(\mathbb T^2)$. We claim that $A$ is then one-to-one.
Suppose $(h_0,g_0)\in \mathcal I_{\alpha_1}^\perp\times \mathcal I_{\alpha_2}^\perp$ such that $A(h_0,g_0) = 0$.  That is, $ h_0 - h_0\circ\alpha_1 = -g_0 - (-g_0)\circ \alpha_2$.  Thus, $H = h_0- h_0\circ\alpha_1$
is a common coboundary for $\alpha_1$ and $\alpha_2$, and so  $H$ must be zero.  But then  $h_0 \in \mathcal I_{\alpha_1}$ and $g_0 \in \mathcal I_{\alpha_2}$.  Since $h_0\in \mathcal I_{\alpha_1}^\perp$, we have $h_0 = 0$,
and since $g_0\in  \mathcal I_{\alpha_2}^\perp$, we have $g_0 =0$.  Hence, $(h_0,g_0)$ zero.

So if $\alpha_1$ and $\alpha_2$ have no non-trivial common coboundaries, then $A$ is an isomorphism.  Hence, if $(h_n)$ is in $L_2^0(\mathbb T^2)$ and $h_n-h_n\circ \alpha_2$
converges in $L_2$-norm to $H$, we would have $(h_n)$ converging in $L_2$-norm too and so there would be $h \in L_2^0(\mathbb T^2)$ such that $H = h-h\circ \alpha_2$.
That is, $\mathcal D_{\alpha_2}$ is $L_2$-norm closed.   We claim this is not the case.

We can see that $\mathcal D_{\alpha_2}$ is not $L_2$-norm closed as follows.  It is clear that $\mathcal I_{\alpha_2}$ is all $f\in L_2(\mathbb T^2)$ such that for a.e. $x$, the function $f(x,y)$ is constant a.e. in $y$.
But then $\mathcal I_{\alpha_2}^\perp$ consists of all functions $f \in L_2(\mathbb T^2)$ such that $f(x,y)$ is mean-zero in $y$ for a.e. $x$.  Now consider a mean-zero function $\phi\in L_2(\mathbb T)$ which is not a $\tau_x$-coboundary
for any $x\in \mathbb T$ of infinite order; there are such functions that are in $C(\mathbb T)$.  See Remark~\ref{rot}.
Now let $f(x,y) = \phi(y)$.  Then $f\in \mathcal I_{\alpha_2}^\perp$ i.e. it is in the $L_2$-norm
closure of $\mathcal D_{\alpha_2}$.  But if $f\in \mathcal D_{\alpha_2}$ too, then $f(x,y) = \phi(y) = h(x,y) - h\circ \alpha_2(x,y) = h(x,y) - h(x,xy)$ for some $h \in L_2(\mathbb T^2)$.  Hence, for a.e. $x$, $h(x,y)$ is
an $L_2(\mathbb T)$ function in $y$, and $\phi$ would be a $\tau_x$-coboundary with transfer function $H(y) = h(x,y) \in L_2(\mathbb T)$.  However, a.e. $x$ is infinite order too, and $\phi$ cannot have this property with
respect to any $x$ of infinite order.
\end{proof}

\begin{rem}\label{extend}  We can extend this result using duals in the Lebesgue spaces.  This would give, for any $1 \le r < \infty$, a non-trivial common coboundary $H\in L_r(\mathbb T^2)$
with transfer functions in $L_r(\mathbb T^2)$ also.  The argument does not work in $L_\infty(\mathcal T^2)$, so it is not clear if there is a non-trivial bounded common coboundary for $\alpha_1$ and
$\alpha_2$ with transfer functions in $L_\infty(\mathbb T^2)$.
\end{rem}

\begin{rem}\label{anothereg} The method above can give examples more easily if we switch to the generators $\alpha_1$ and $\alpha_3 =\alpha_1\circ \alpha_2$ of $SL(2,\mathbb Z)$.  Then $\alpha_3$ is
ergodic so it is well-known that the coboundaries $\mathcal D_{\alpha_3}$ is not $L_2$-norm closed.  There may be a way of using the existence of non-trivial common coboundaries for one pair of
generators of the unimodular group to get the same thing for another pair of generators, but it is not clear at this time how to do this.
\end{rem}

It is not obvious what can be said if one needs to use more than two toral automorphisms to write a given mean-zero function as a sum of coboundaries.  But at least we can ask how to analyze other pairs of toral automorphisms.
The same idea as above would work if the maps $\alpha_i,i=1,2$ are such that invariance with respect to these maps implies automatic 
continuity of the linear form.  One expects this to be connected to the size 
the subgroup generated by $\alpha_1$ and $\alpha_2$.  But it seems that the random pair $\alpha_i,i=1,2$ generates a thin group and so would be  far from being obviously large enough for the arguments above to work.

\section{Generic case for no common coboundaries}\label{nojoint}

Suppose $\tau$ and $\sigma$ are ergodic.  To say there are not
common coboundaries with transfer function in $L_r(X)$ would mean
that if $f = h-h\circ\tau = g - g\circ \sigma$, for some $h, g \in
L_r(X)$,  then $f = 0$.  That is, if $h-h\circ\tau + (-g) -
(-g)\circ \sigma = 0$, and $h,g \in L_r(X)$ are mean-zero, then $h=
g = 0$.  Let $L_r^0(X)$ denote the mean-zero functions in $L_r(X)$.
We see in this situation that the map $S(h,g) = h-h\circ\tau + g - g\circ \sigma$ from
$L_r^0(X)\bigoplus L_r^0(X)$ to $L_r^0(X)$ is one-to-one.

\begin{prop}\label{dual}  For $1 < p \le \infty$, the map $S$ is one-to-one if and only if, with $q$ the dual index to $p$,
for all $\epsilon > 0$, and  $K_1,K_2 \in L_q^0(X)$, there exists $H
\in L_q(X)$ such that $\|H - H \circ \tau- K_1\|_q \le \epsilon$ and
$\|H - H \circ \sigma - K_2\|_q \le \epsilon$.  Also, $S$ is
one-to-one on $L_1(X)$ if for $K_1,K_2 \in L_\infty^0(X)$, there
exists $H \in L_\infty(X)$ such that $H - H \circ \tau$ approximates
$K_1$ in the weak* topology and simultaneously $H - H \circ \sigma$
approximates $K_2$ in the weak* topology.
\end{prop}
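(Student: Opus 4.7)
The plan is to derive the equivalence from the standard duality identity $\ker S=(\mathrm{range}\,S^*)^\perp$, after an explicit computation of the adjoint. Since $\tau$ and $\sigma$ are measure-preserving, for $h,g\in L_p^0(X)$ and $K\in L_q(X)$ the substitution formula gives
\[\int K\cdot S(h,g)\,dp=\int(K-K\circ\tau^{-1})h\,dp+\int(K-K\circ\sigma^{-1})g\,dp,\]
so $S^*(K)=(K-K\circ\tau^{-1},\,K-K\circ\sigma^{-1})$ in the natural pairing.

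To bridge the mismatch between $\tau^{-1},\sigma^{-1}$ in the adjoint and $\tau,\sigma$ in the proposition's statement, I would introduce the invertible isometry $\iota:L_q^0\oplus L_q^0\to L_q^0\oplus L_q^0$ given by $\iota(K_1,K_2)=(-K_1\circ\tau,\,-K_2\circ\sigma)$. A direct check yields $\iota\circ S^*(K)=(K-K\circ\tau,\,K-K\circ\sigma)$, and $\iota$ is a homeomorphism both in norm and (whenever it applies) in the weak-* topology, so $\mathrm{range}\,S^*$ is norm (respectively weak-*) dense in $L_q^0\oplus L_q^0$ if and only if the set $\{(H-H\circ\tau,\,H-H\circ\sigma):H\in L_q(X)\}$ is.

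With this reformulation, each case is routine. For $1<p<\infty$, $L_p^0\oplus L_p^0$ is reflexive, so $\ker S=0$ is equivalent to $\mathrm{range}\,S^*$ being weak-* dense; by Hahn-Banach the weak-* closure of a linear subspace of a reflexive space coincides with its norm closure, which yields the stated $\epsilon$-approximation in $\|\cdot\|_q$. For $p=\infty$, $L_\infty^0\oplus L_\infty^0$ is non-reflexive, but $S$ is weak-* continuous because each $h\mapsto h\circ\tau$ on $L_\infty$ is the adjoint of $k\mapsto k\circ\tau^{-1}$ on $L_1$; hence $S$ has a bounded pre-adjoint $T:L_1^0\to L_1^0\oplus L_1^0$ of the same form as $S^*$, and $S=T^*$ is injective if and only if $T$ has norm-dense range, giving the $q=1$ statement after one more application of $\iota$. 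For $p=1$, the duality still gives $\ker S=0$ iff $\mathrm{range}\,S^*$ is weak-* dense in $L_\infty^0\oplus L_\infty^0$, but here weak-* density is strictly weaker than norm density; the weak-* approximation condition is exactly what is needed to conclude injectivity of $S$.

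The main difficulty is not analytical but organizational: one has to track $\tau$ versus $\tau^{-1}$ and norm versus weak-* density carefully across the reflexive range and the two endpoints. The isometry $\iota$ reconciles the first issue in a single line, and the standard adjoint/pre-adjoint dichotomy for $L_\infty$ and $L_1$ handles the second, so no deeper input is required.
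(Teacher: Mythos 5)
Your proof is correct and follows essentially the same route as the paper's: compute $S^*(K)=(K-K\circ\tau^{-1},\,K-K\circ\sigma^{-1})$, invoke the standard duality that $S$ is one-to-one iff its (pre)adjoint has (weak-*, resp.\ norm) dense range in each of the three regimes, and then remove the $\tau^{-1}$ versus $\tau$ discrepancy by a change of variables. The only cosmetic difference is that you perform that change of variables on the dual side via the isometry $\iota$, whereas the paper performs it on the primal side by replacing $S$ with $T(h,g)=h-h\circ\tau^{-1}+g-g\circ\sigma^{-1}$ and dualizing $T$; these are the same manipulation seen from the two ends of the pairing.
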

\begin{proof} We have $S$ one-to-one if and only whenever $\langle S(h,g),K\rangle = 0$ for all $K\in L_q(X)$, then $h = g = 0$.
In the case that $1 \le p < \infty$, this means that the dual
operator $S^*$ has dense range. But the dual operator $S^*$ maps
$L_q(X) \to L_q^0(X)\bigoplus L_q^0(X)$ by $S^*(H) = (H -
H\circ\tau^{-1}, h - H\circ\sigma^{-1})$.  In the case that $p =
\infty$, we recognize that $S$ is a dual operator to this same dual
form, and so a similar argument can be used.  The case of $p = 1$ is
the same using the weak* topology in place of the norm topologies.

But also $h-h\circ\tau + g - g\circ \sigma = (-h\circ\tau) - (-h\circ\tau)\circ\tau^{-1} + (-g\circ\sigma) - (-g\circ\sigma)\circ\sigma^{-1}$, so $S$ is one-to-one if and
only if the similar mapping $T(h,g) = h- h\circ\tau^{-1} + g- g \circ\sigma^{-1}$ is one-to-one.  The dual argument above applied to $T$ gives the
result.
\end{proof}

\begin{rem} We would like to get a result for the biggest space $L_1(X)$, and one that allows measurable transfer functions.  But the argument below
does not carry that far at this time.
\end{rem}

The dual property shows that $\tau$ and $\sigma$ have no common
coboundaries if and only if we can carry out the following two
separate approximations.  We state this for $1 \le q < \infty$, but
it works the same for $q = \infty$ using the weak* topology.  The
approximations are this: given $\epsilon > 0$, and $K_i \in
L_q^0(X),i=1,2$ there exists $H_i\in L_q^0(X)$ such that a) $\|H_1 -
H_1 \circ \tau - K_1\|_q \le \epsilon$ and $\|H_1 - H_1 \circ \sigma
\|_q \le \epsilon$, and
b)  $\|H_2 - H_2 \circ \sigma - K_2 \|_q \le
\epsilon$ and $\|H_2 - H_2 \circ \tau\|_q \le \epsilon$.
These would be necessary properties, and if they hold then taking $H
= H_1+H_2$ gives the needed function in Proposition~\ref{dual}.

An additional simplification is useful.  We can reduce this joint
approximation property to the case where the functions $K_i$ are
mean-zero and take only the values $\pm 1$. Such functions
span a dense subspace of $L_q^0(X)$ (in the weak* topology in the
case that $q = \infty$). But then linearity of the approximation
process here allows us to add approximate solutions and get the
general approximation from that for these simpler functions $K_i$.

These remarks set up the following existence result. It is implicitly constructive, but does not give explicit, easily described examples.
\begin{prop}  The generic pair of maps $(\sigma,\tau)$ have no common
boundaries with transfer functions in $L_r^0(X), 1 \le p \le \infty$.
\end{prop}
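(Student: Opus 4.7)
The plan is to argue by Baire category in the Polish space $\mathcal{M}\times\mathcal{M}$, where $\mathcal{M}$ is the group of invertible measure-preserving transformations of $(X,\mathcal{B},p)$ under the weak topology. By Proposition~\ref{dual} and the two reductions immediately preceding the statement (the split into two separate approximations and the restriction to $K_1,K_2$ in a countable family of $\pm 1$-valued mean-zero functions), it suffices to exhibit the set of pairs with no nontrivial common coboundary as a dense $G_\delta$. For each rational $\epsilon>0$ and each $(K_1,K_2)$ in the countable family, let $\mathcal{U}(\epsilon,K_1,K_2)\subset\mathcal{M}\times\mathcal{M}$ denote the set of $(\sigma,\tau)$ for which there exist $H_1,H_2\in L_q^0(X)$ satisfying all four inequalities $\|H_1-H_1\circ\tau-K_1\|_q\le\epsilon$, $\|H_1-H_1\circ\sigma\|_q\le\epsilon$, $\|H_2-H_2\circ\sigma-K_2\|_q\le\epsilon$, $\|H_2-H_2\circ\tau\|_q\le\epsilon$. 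Openness of $\mathcal{U}(\epsilon,K_1,K_2)$ is immediate from continuity of the composition $(\sigma,H)\mapsto H\circ\sigma$ in the weak topology: the same witnesses $H_1,H_2$ work throughout a neighborhood of any $(\sigma_0,\tau_0)\in\mathcal{U}(\epsilon,K_1,K_2)$, and a standard diagonalization with $\epsilon_n\downarrow\epsilon$ promotes this to genuine openness.

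The main step is density. Given a basic neighborhood $\mathcal{O}=\mathcal{O}_\sigma\times\mathcal{O}_\tau$ determined by finitely many test sets $A_1,\ldots,A_m$ and tolerance $\delta>0$, I would build a pair $(\sigma,\tau)\in\mathcal{O}$ together with $H_1,H_2$ via a Halmos-conjugacy / Rokhlin tower construction. First, choose $N$ very large and approximate $\tau_0$ by a cyclic period-$N$ transformation $\tau\in\mathcal{O}_\tau$ with Rokhlin tower $F,\tau F,\ldots,\tau^{N-1}F$, selecting the base $F$ so that the tower partition refines the level sets of $K_1$ well enough in the $L_q$-sense and is probabilistically independent of the algebra generated by $\{A_i\}\cup\{\sigma_0(A_i)\}$. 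Define $H_1$ as a partial-sum step function, $H_1|_{\tau^jF}=-\sum_{i<j}\bar K_{1,i}$ with $\bar K_{1,i}$ the mean of $K_1$ on $\tau^iF$; the mean-zero hypothesis on $K_1$ together with fineness of the tower delivers $\|H_1-H_1\circ\tau-K_1\|_q\le\epsilon$. Next, replace $\sigma_0$ by a $\sigma\in\mathcal{O}_\sigma$ that permutes within each level $\tau^jF$ up to a small leakage $\eta$: the independence built into the tower makes the cells $A_i\cap\tau^jF$ and $\sigma_0(A_i)\cap\tau^jF$ have essentially equal measure, so a cell-by-cell measure-preserving matching inside each level yields such a $\sigma$ with $p(\sigma A_i\triangle\sigma_0 A_i)<\delta$, and then $\|H_1-H_1\circ\sigma\|_q$ is bounded by the arbitrarily small leakage. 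A symmetric construction on a distinctly finer scale $M\gg N$, using a $\sigma$-Rokhlin tower of height $M$ and a step function built from $K_2$-averages, supplies $H_2$ and the remaining two inequalities.

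The hard part is the possible conflict between the $H_1$ and $H_2$ constructions: exactly preserving a $\tau$-tower under $\sigma$ and simultaneously a $\sigma$-tower under $\tau$ would make $\sigma$ and $\tau$ commute on the resulting joint grid and therefore actually produce common coboundaries (see Section~\ref{commute}), contradicting the conclusion we are after. The way out is to insist only on approximate preservation, with errors $\eta,\eta'>0$ chosen small enough in $L_q$-norm to deliver the four inequalities but large enough not to force exact commutation; the scale separation $N\ll M$ and the freedom to adjust $\sigma$ and $\tau$ independently on the fine partitions let the two constructions proceed without interfering. With density so established, $\bigcap_{\epsilon,K_1,K_2}\mathcal{U}(\epsilon,K_1,K_2)$ is a dense $G_\delta$ in $\mathcal{M}\times\mathcal{M}$, and by Proposition~\ref{dual} every pair in this residual set has no nontrivial common coboundaries with transfer functions in $L_r^0(X)$.
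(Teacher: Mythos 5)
Your overall framework (Baire category in $\mathcal T\times\mathcal T$, the dual characterization of Proposition~\ref{dual}, reduction to a countable family of $\pm1$-valued mean-zero $K$'s, openness of the approximation sets) matches the paper. But there is a genuine gap at the density step, and it is precisely the point you flag as ``the hard part.'' You define a single set $\mathcal U(\epsilon,K_1,K_2)$ requiring all four inequalities simultaneously, and your density argument must therefore produce one pair $(\sigma,\tau)$ in a given neighborhood together with both $H_1$ and $H_2$. Your two constructions pull in opposite directions: to get $\|H_1-H_1\circ\sigma\|_q$ small you force $\sigma$ to (nearly) preserve the levels of a $\tau$-tower, and to get $\|H_2-H_2\circ\tau\|_q$ small you force $\tau$ to (nearly) preserve the levels of a $\sigma$-tower; as you yourself observe, doing both exactly drives the pair toward a commuting structure, which would manufacture common coboundaries. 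The assertion that ``scale separation $N\ll M$ and the freedom to adjust $\sigma$ and $\tau$ independently'' resolves the interference is not an argument --- after the second perturbation you must re-verify the first two inequalities for the modified $\tau$, and nothing in the proposal controls that. Since the entire content of the proposition lives in this density claim, the proof is not complete as written.

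The paper's proof removes the obstruction structurally rather than analytically. It never constructs a pair satisfying both approximations at once: it defines $\mathcal E_1$ by the two inequalities involving $H_1$ alone and $\mathcal E_2$ by the two involving $H_2$ alone, proves each is a dense $G_\delta$ in $\mathcal T\times\mathcal T$ separately, and lets Baire category supply the intersection. Moreover, within the density argument for $\mathcal E_1$ only one map is perturbed: $\sigma$ is frozen as a member of a countable dense family of rank one maps, a tall $\sigma$-Rokhlin tower with levels $R_j$ is refined into columns $R_{(j,i)}$ with $\sigma(R_{(j,i)})=R_{(j+1,i)}$, and $H$ is taken to be the value $i$ on the $i$-th column --- so near-$\sigma$-invariance of $H$ comes for free from the tower geometry (error of order $M^q/N$), not from perturbing $\sigma$. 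Then $\tau$ alone is built to shift the column index by $\pm1$ according to the sign of $K$ on the pieces $P_l$, which gives $H-H\circ\tau\approx K$ while keeping $\tau$ in the prescribed weak neighborhood. If you want to repair your write-up, adopt this decoupling: prove density of the two single-approximation sets separately and intersect, rather than attempting the joint construction.
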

\begin{proof}  Assume first that $1 < p \le \infty$.
Let $\mathcal D$ be a countable set of functions taking only the
values $\pm 1$ which span a dense subspace of $L_q^0(X)$.
Consider the set of pairs $(\sigma,\tau)$ such that for all $m \ge
1$ and $K \in \mathcal D$, there exists $H \in L_q(X)$ such that
$\|H - H\circ \sigma\|_q < 1/m$ and $\|H - H\circ \tau - K\|_q <
1/m$.  This set $\mathcal E_1$ is a countable intersection of open
sets in $\mathcal T\times \mathcal T$.  We will show that this is a
dense set in $\mathcal T\times \mathcal T$.  We can reverse the
roles of $\sigma$ and $\tau$ to prove the same thing for the
corresponding set $\mathcal E_2$.  But then the intersection
$\mathcal  E_1\cap \mathcal E_2$ is a dense $\mathcal G_\delta$ set
for pairs $(\sigma,\tau)$ for which both a) and b) hold.  By
Proposition~\ref{dual} and the discussion above, this proves the
result.

Now fix an ergodic, rank one map $\sigma$. We show that there is a
dense $G_\delta$ set $\mathcal E_1(\sigma)$ consisting of maps
$\tau$ such for all $\epsilon > 0$ and $K \in \mathcal D$, there
exists $H\in L_q(X)$ such that $\|H - H\circ\sigma\|_q  < \epsilon$
and $\|H - H \circ\tau - K\|_q < \epsilon$.  If we take a countable
dense set $\Sigma$ of rank, one maps, then the set
$\bigcap\limits_{\sigma\in \Sigma}\mathcal E_1(\sigma)$ would again
be a dense $\mathcal G_\delta$ set. But then using the density of
this set and the choice of $\Sigma$ being dense, this shows that
$\mathcal E_1$ is a dense set.

So fix an ergodic, rank one map $\sigma$ and a countable set
$\mathcal D$ as above. Suppose $l \ge 1$. Let $\mathcal D(l,\sigma)$
be all $H \in L_q(X)$ such that $\|H - H\circ\sigma\|_q < 1/l$.
Consider the following set $\mathcal G = \bigcap\limits_{K \in
\mathcal D}\bigcap\limits_{l=1}^\infty \bigcup\limits_{H \in
\mathcal D(l,\sigma)} \{\tau \in \mathcal T: \|H - H \circ\tau -
K\|_q < 1/l\}$.  The union here is a union of open sets in the weak
topology on $\mathcal T$.  So $\mathcal G$ is a $G_\delta$ set in
$\mathcal T$. We claim it is dense because each $\bigcup\limits_{H
\in \mathcal D(l,\sigma)} \{\tau \in \mathcal T: \|H - H \circ\tau -
K\|_q < 1/l\}$ is dense.

To see this density, it suffices to show that for any $\tau_0$,
$\epsilon > 0$ and $K \in \mathcal D$, there exists $\tau$ close to
$\tau_0$ in the weak topology, and there exists $H \in L_q(X)$, such
that $\|H - H\circ\sigma\|_q \le \epsilon$ and $\|H - H \circ\tau -
K\|_q \le \epsilon$.

Since $\sigma$ is rank one, there is a Rokhlin tower $\mathcal R$
with levels $R_j,j=1,\dots,N$ for $\sigma$ whose levels can be used
to approximate any previously chosen measurable partition of $X$.  We
need only increase $N$ and decrease $\epsilon = 1 -
p(\bigcup\limits_{j=1}^N R_j)$ to achieve this approximation to any
desired degree of accuracy.

To get close to $\tau_0$, we can proceed as follows.  Take a weak
neighborhood $W$ of $\tau_0$.  Consider pairwise disjoint $(P_l)$
and pairwise disjoint $(Q_l)$ with $p(P_l) = p(Q_l)$ for all.  Assume
that for each $l$, both $P_l$ and $Q_l$ are equal to unions
of (the same number) of levels $R_j$.   There is such a choice of
sets so that both $(P_l)$ and $(Q_l))$ are close to being partitions
of $X$, and  so that if $\tau$ maps $P_l$ to $Q_l$ for all $l$,
then $\tau \in W$. Indeed, we may assume that if $\tau$ just has
$p(\tau(P_l)\Delta Q_l)$ sufficiently small for all $l$,
then that suffices to guarantee that  $\tau \in W$.  By an
additional approximation if needed, we may also assume without loss of
generality that $K$ is
a constant $a_l$ on the sets $P_l$ for all, and the norm of $K$
restricted to the complement of $\bigcup\limits_{j=1}^J P_l$ is small. Note that by choice of the
functions $K$ being used, all $a_l$ are either $\pm 1$. Here
it is worth observing that $\tau_0$ does not necessarily (and probably
does not) preserve the partition of $P_l$ and $Q_l$ into the levels
$R_j$.

We partition each $R_j$ into sets $R_{(j,i)}, i=1,\dots,M$ of equal
measure. Here $p(R_j) = (1-\epsilon)/N$ for all $j$, and
$p(R_{(j,i)}) = (1-\epsilon)/NM$ for each $(j,i)$.  We assume that
these sets are chosen so that for $j=1,\dots,N-1$ and $i=1,\dots,M$, $\sigma(R_{(j,i)}) =
R_{(j+1,i)}$.  We take as our function $H$ a function which has the
value $i$ on $\bigcup\limits_{j=1}^J R_{(j,i)}$.  It is important
that for a sufficiently tall Rokhlin tower, $H$ is close to being
$\sigma$-invariant.  Actually, the error here for the
$\sigma$-invariance is  controlled by $\|H - H\circ \sigma\|_q^q \le
2\sum\limits_{i=1}^M i^q/MN \le 2M^q/N$.

Now we define $\tau$ as follows.  Here is the basic idea, and first step
 in defining $\tau$.  Take $\tau$ to almost map
$P_1$ to $Q_1$.  But also, for $x
\in P_1$, we want $H(x) - H(\tau (x)) = a_1$.  Now $P_l$ is a union
of some $R_{(j_1,i)}, j_1 \in \mathcal J_1,i=1,\dots,M$, and $Q_l$ is a union of
some
$R_{(j_2,i)}, j_2 \in \mathcal J_2, i=1,\dots, M$. Here $J_1$ and $J_2$ have the
same number of terms because $p(P_1) = p(Q_1)$. Choose any bijection $t$ of $J_1$ with $J_2$.
If $a_1 = -1$, we
take $\tau$ to be a map such that for all $j_1\in J_1$,
we have $\tau(R_{(j_1,i)}) = R_{(t(j_1),i+1)}$, except that for $i = M$
we leave $\tau$ undefined on all $R_{(j_1,M)}$.  Note that this also leaves $R_{(j_1,1)}$ with
$j_1\in \mathcal J_1$ not
in the defined range of $\tau$ for now.
If $a_1 = 1$, we
take $\tau$ to be measure-preserving such that for all $j_1 \in J_1$,
we have $\tau(R_{(j_1,i)}) = R_{(t(j_1),i-1)}$, except that for $i = 1$
we leave $\tau$ undefined.  This leaves $R_{(j,M)}$ with
$j\in \mathcal J_1$ not
in the defined range of $\tau$ for now.
We continue this process in the same fashion through
all of the pairs $P_l, Q_l$ depending on the values $a_l = \pm 1$.

Because $K$ is mean-zero, there are as many
exceptions in the above where $i=M$ as where $i=1$.  That is, taking
into account the number of $j$ such that $R_{(j,i)} \subset P_l$, we
have the same number of times that $\tau$ is not defined on $R_{(a,M)}$
as where $\tau$ does not have $R_{(b,M)}$ in its range.  We take any
one-to-one correspondence of these sets,
and take $\tau$ to map $R_{(a,M)}$ to $R_{(b,M)}$ in a
measure-preserving fashion.  Since $H = M$ on these sets, we would
have now $\tau$ defined and $H - H\circ \tau = 0$ on all the sets $R_{(a,M)}$.
We can carry out the same process for the cases where
$\tau$ is not defined on $R_{(a,1)}$ and
where $\tau$ does not have $R_{(b,1)}$ in its range.   These are not actually
as important because $H$ is small there.  In any case,
the result is that $\tau$
maps $P_l$ to $Q_l$ except for the relevant sets on the ends: $R_{(j,M)}$ or
$R_{(j,1)}$.  The error here is controlled by $N/MN  = 1/M$.  By
choosing $M$ sufficiently large, this will allow $\tau$ to be in
$W$.  But we know that $H$ is almost $\sigma$-invariant, if $N$ is large enough,
and $\epsilon$ is small enough.  Also, the choices of
the shifts of the $R_{(j,i)}$ by $\tau$ makes it so that $H - H\circ\tau$ is almost equal to $K$
in norm, by a factor controlled by $1/M$ also (since $N$ is large and $\epsilon$ is small).

The case that $p=1$ is handled similarly.  Because we use the weak*
topology, we can make approximations in the construction that would
not be possible if we were using the $L_\infty$-norm topology.

\end{proof}

\section{General cases}\label{gencases}

It is well-known that given a map $\tau$, the $\tau$-coboundaries are a set of first category.  That is, the generic function is
not a $\tau$-coboundary.  There is a dual version of this where the roles of $f$ and $\tau$ are reversed.

\begin{prop}\label{notany}  For any non-zero mean-zero $f \in L_r(X)$, with $1 \le r \le \infty$, the maps $\tau$ for which $f$ is not a $\tau$-coboundary, with a transfer function in $L_1(X)$, are a dense $G_\delta$ set.
\end{prop}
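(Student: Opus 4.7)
The plan is to show that the complementary set
\[
\mathcal B(f) := \{\tau \in \mathcal T : f = h - h\circ\tau \text{ for some } h \in L_1(X)\}
\]
is of first category in the Polish group $\mathcal T$ endowed with the weak topology; the Baire category theorem then yields a dense $G_\delta$ complement. Since $f\in L_r(X)\subseteq L_1(X)$, the Lin--Sine theorem applied at exponent $1$ gives $\tau\in\mathcal B(f)$ if and only if $\sup_n\|S_n^\tau f\|_1<\infty$, so
\[
\mathcal B(f)=\bigcup_{N\ge 1} C_N,\qquad C_N:=\{\tau\in\mathcal T:\|S_n^\tau f\|_1\le N\text{ for every }n\ge 1\}.
\]
Each $C_N$ is closed: the map $\tau\mapsto g\circ\tau$ from $\mathcal T$ into $L_1(X)$ is continuous for every $g\in L_1(X)$ (verify on indicators, then extend by density using the $L_1$-isometric property of composition), so $\tau\mapsto\|S_n^\tau f\|_1$ is continuous and $C_N$ is an intersection of closed sets.

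The main step is to prove each $C_N$ has empty interior. Fix $\tau_0\in\mathcal T$ and a weak neighborhood $W$ of $\tau_0$, and pick $\eta>0$ such that every $\tau'$ differing from $\tau_0$ on a set of measure less than $\eta$ lies in $W$. Since $f$ is mean-zero and nonzero, choose $\delta>0$ and a set $A^+\subset X$ of positive measure with $f\ge\delta$ on $A^+$. For $n$ to be chosen large, select pairwise disjoint subsets $B_0,\ldots,B_{n-1}\subset A^+$ each of measure $\eta/(3n)$, and set $U=\bigcup_{j=0}^{n-1}B_j$, so $p(U)=\eta/3$. I construct a measure-preserving $\tau$ that cyclically shifts $B_j\to B_{(j+1)\bmod n}$ on $U$, agreeing with $\tau_0$ off a set of measure at most $2p(U)\le 2\eta/3$: with $V=\tau_0(U)$, the points outside $U$ whose $\tau_0$-image lands in $U$ (a set of measure at most $p(U)$) are rerouted by a measure-preserving bijection to the corresponding ``free'' part $V\setminus U$ of $X\setminus U$, while $\tau=\tau_0$ on the remaining part of $X\setminus U$. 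This yields $\tau\in W$.

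Since $U\subset A^+$ is $\tau$-invariant by construction, every $x\in U$ satisfies $\tau^k x\in U\subset A^+$ and hence $f(\tau^k x)\ge\delta$ for all $k$. Consequently $S_n^\tau f(x)\ge n\delta$ on $U$, and
\[
\|S_n^\tau f\|_1\;\ge\; n\delta\,p(U)\;=\;\tfrac{1}{3}n\delta\eta,
\]
which exceeds $N$ once $n>3N/(\delta\eta)$. Thus $\tau\in W\setminus C_N$, proving $C_N$ is nowhere dense.

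The principal technical obstacle is the measure-preserving swap producing $\tau$: one must check that the rerouting from $\tau_0^{-1}(U)\setminus U$ onto $V\setminus U$ (both have the same measure by measure preservation of $\tau_0$) can be chosen so that the total modification stays within measure $\eta$ of $\tau_0$. This is a routine Rokhlin-style rearrangement in the same spirit as the perturbation argument used in the proof of the preceding proposition; once it is in hand, the Baire category theorem finishes the argument.
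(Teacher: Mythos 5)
Your proposal is correct, and its overall architecture is the same as the paper's: reduce via Lin--Sine (at exponent $1$, using $L_r\subseteq L_1$) to showing that $\bigcup_N C_N$ with $C_N=\bigcap_n\{\tau:\|S_n^\tau f\|_1\le N\}$ is first category, check that each $C_N$ is weakly closed, and then show each $C_N$ is nowhere dense by perturbing an arbitrary $\tau_0$ inside a given weak neighborhood so that a positive-measure subset of $\{f\ge\delta\}$ becomes invariant, forcing $S_n^\tau f\ge n\delta$ there. Where you differ is in the mechanics of that perturbation: the paper starts from a rank-one map $\tau_0$, builds a Rokhlin tower, and reroutes the top of a sub-stack to make $H_0\subset\{f\ge L\}$ invariant, whereas you work with an arbitrary $\tau_0$, cyclically permute $n$ disjoint equal-measure pieces $B_0,\dots,B_{n-1}$ of $A^+$, and patch the complement by a single measure-preserving rerouting from $\tau_0^{-1}(U)\setminus U$ onto $\tau_0(U)\setminus U$ (these do have equal measure, since both equal $p(U)-p(U\cap\tau_0(U))$). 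Your version is cleaner: it avoids the rank-one/Rokhlin machinery entirely and makes the ``differs from $\tau_0$ on measure at most $2p(U)$'' bound transparent, which is exactly what controls membership in a basic weak neighborhood. One trivial adjustment: you set $p(U)=\eta/3$, but $A^+$ may have measure smaller than $\eta/3$; take instead $p(U)=\tfrac13\min(\eta,p(A^+))$, which still gives $\|S_n^\tau f\|_1\ge n\delta\,p(U)\to\infty$ since $p(U)$ is bounded below independently of $n$. With that, the argument is complete.
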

\begin{proof}  Consider the set
$\bigcup\limits_{K=1}^\infty \bigcap\limits_{n=1}^\infty \{\tau: \|S_n^\tau f\|_1 \le K\}$.  By Lin and Sine~\cite{LS}, this is the set of maps for which $f$
is a coboundary.   It is easy to see that  $\{\tau: \|S_n^\tau f\|_1 \le K\}$ is closed in the weak topology.  Indeed, for $\tau_s$ converging
weakly to $\tau$, and
any $f$, $\|S_n^{\tau_s} f\|_1 \to \|S_n^\sigma f\|_1$ as $s \to \infty$.  Hence, clearly  $\{\tau: \|S_n^\tau f\|_1 \le K\}$  and
$\bigcap\limits_{n=1}^\infty \{\tau: \|S_n^\tau f\|_r \le K\}$
are closed in the weak topology.

Now let us show that $\bigcap\limits_{n=1}^\infty \{\tau: \|S_n^\tau f\|_1 \le K\}$ has no interior.  Suppose otherwise.  Then there is a rank one map $\tau_0$
and a non-trivial weak neighborhood of $\tau_0$ contained in $\bigcap\limits_{n=1}^\infty \{\tau: \|S_n^\tau f\|_r \le K\}$.  So there exists $A_1,\dots,A_m \in \beta$
and $\epsilon > 0$ such that the weak neighborhood contains an open set $O$ of the form
$\{\tau: p(\tau A_i\Delta \tau_0 A_i) < \epsilon, i=1,\dots,m \}$.

We will be choosing $\delta_i > 0,i=1,2$ below
to give us certain estimates.
First, there is a set $H, p(H) > 0$ and $L > 0$ such that $f \ge L$ on $H$.  Let $H_0 \subset H$ with $p(H_0) = \delta_1 > 0$.
Now for any $N$ and $\delta_2 > 0$ , we can construct
a Rokhlin tower $\mathcal T = \{B_1,\dots,B_N\}$ for $\tau_0$ such that $p(\bigcup\limits_{i=1}^N B_i)  \ge 1 - \delta_2$.

Let $\sigma$ be the usual corresponding map, but take $\sigma$ to map the top level of the levels at the bottom giving $H_0$ into the first level so that $\sigma(H_0) = H_0$.  Also, take $\sigma$ to map the
top level of $\mathcal B_0$ into the level above the stack at the bottom giving $H_0$.
If $\delta_1$ and $\delta_3$ are sufficiently small (i.e. $N$ is large enough, and $\delta_i, i =1,2$ are small enough), then the resulting $\sigma$ is in $\mathcal O$ and hence in $\bigcap\limits_{n=1}^\infty \{\tau: \|S_n^\tau f\|_1 \le K\}$.

We have
$\|S_n^\sigma f\|_1 \ge \int_{H_0} |\sum\limits_{k=1}^n f\circ \sigma^k|\, dp
\ge nL\delta_1$.
But then $\|S_n^\sigma f\|_1$ can be made larger than  $2K$ if $n$ is large enough.
This contradicts $\sigma \in \bigcap\limits_{n=1}^\infty \{\tau: \|S_n^\tau f\|_1 \le K\}$.\
\end{proof}

\begin{rem}
This result gives information about the behavior of the ergodic averages $A_n^\tau f = \frac 1n\sum\limits_{k=1}^n f\circ\tau^k$.  We have $\|A_n^\tau f\|_r = O(1/n)$ exactly when
$f$ is a $\tau$-coboundary.  This gives a good rate of convergence in the mean for the ergodic averages.  Is there a (dense) class of functions which
satisfies this estimate for all $\tau$?  The answer is negative because Proposition~\ref{notany} shows that there is not even one non-zero, mean-zero function
$f \in L_r(X)$ which satisfies such an estimate (except possibly for a set first category set of maps).
\end{rem}

\begin{rem}\label{Schmidt} We should also be able to prove a generic result like Proposition~\ref{notany} where the transfer function is allowed to be just measurable.
The statement would be: Given $f \in L_1(X)$, for the generic map $\tau$, there is no measurable $h$ such that $f = h - h\circ \tau$.
To prove this, we would  use a result of K. Schmidt: $f$ is a $\tau$-coboundary if and only if for every $\epsilon>0$ , there
exists a positive real number $A$ such that for each $n\in \mathbb Z$, $|S_n^\tau f(x)|\le A$ for all $x$ in a set $E_n$
of measure at least $ 1-\epsilon$ .  See Schmidt~\cite{Schmidt}.  So the maps with this property form a subset of $\bigcup\limits_{A=1}^\infty \bigcap\limits_{n=1}^\infty \{\sigma: p\{|A_n^\sigma f| \le A\} \ge \gamma\}$
for each $\gamma > 0$.  The anticipation is that this set is first category because the closed sets $\bigcap\limits_{n=1}^\infty \{\sigma: p\{|A_n^\sigma f| \le A\} \ge \gamma\}$ have no interior for any $\gamma > 0$.  However, to date, we have not been able to prove this.
There is a similarity of this problem with the Baggett problem for rotations.  If one tries to write down the class here using a countable avatar for $\gamma$, then the natural description of the class is not a $G_\delta$ set.
\end{rem}

\begin{rem}\label{extendSchmidt} It would be worthwhile to have a result like Schmidt's Theorem, but one that applies to $f\in L_r(X)$ being a $\tau$-coboundary with transfer function in $L_s(X)$ in the cases, for example, where $0 < s < 1$ and
$s \le r$.
\end{rem}

\section {\bf Joint under powers}\label{powers}

Consider a function $f \in L_\infty(\mathbb T)$ of the form $f(\gamma) = \gamma^k$ for some fixed $k$.  Then for any rotation $\tau$ of $\mathbb T$,
say $\tau(\gamma) = \alpha\gamma$, we would have $f\circ\tau = \alpha^k f$.  So $f$ is an eigenfunction.  If $\alpha^k \not= 1$, then
for a suitable constant $c$, we have $cf = f - f\circ \tau$.  Hence, $f$ is a $\tau$-coboundary.  Thus, if $\alpha$ is infinite order and $k \not= 0$,
this shows how a function $f$ can be a common coboundary for all powers $\tau^l$ with $l \in \mathbb Z, l\not= 0$.

We can give examples of this same phenomenon with $\tau$ does not have discrete spectrum.

\begin{prop} There exists $\tau$ strongly mixing and a non-zero mean-zero $f\in L_2(\mathbb T)$ such that $f$ is a $\tau^k$-coboundary for all
$m \in \mathbb Z, m \not= 0$.
\end{prop}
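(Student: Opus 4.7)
The plan is to reduce to a spectral problem and then realize the needed data via a Gaussian automorphism.

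First, I apply the spectral theorem: for a unitary $U$ on a Hilbert space $\mathcal H$ and $f \in \mathcal H$ with spectral measure $\sigma_f$, the condition $f \in (I-U^m)(\mathcal H)$ is equivalent to $\int d\sigma_f(\lambda)/|1-\lambda^m|^2 < \infty$. Applied to the Koopman operator $U_\tau$, this reduces the problem to constructing a strongly mixing $\tau$ together with a non-zero mean-zero $f$ whose spectral measure satisfies
\[
\int_{\T} \frac{d\sigma_f(\lambda)}{|1-\lambda^m|^2} < \infty \quad \text{for every } m \neq 0.
\]

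Next, I construct $\sigma$ by restricting Lebesgue measure to a Diophantine set. Fix $\epsilon > 0$ and, for small $c > 0$, let
\[
E_c = \{\theta \in \T : |\theta - p/q| \geq c/q^{2+\epsilon} \text{ for all } p/q \in \Q\}.
\]
Khintchine's theorem gives $|E_c| > 0$ once $c$ is small enough, and $E_c$ is symmetric under $\theta \mapsto -\theta$. Set $\sigma = \mathbf{1}_{E_c}\, d\theta/|E_c|$. Since $\sigma$ is absolutely continuous, it is Rajchman by Riemann--Lebesgue. For the key integrability estimate, fix $m \neq 0$ and $\theta \in E_c$; let $p$ be the integer nearest $m\theta$ and write $p/m = p'/q'$ in lowest terms, so $q' \mid m$ and hence $q' \leq m$. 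Then
\[
|m\theta - p| = m|\theta - p'/q'| \geq \frac{mc}{(q')^{2+\epsilon}} \geq \frac{c}{m^{1+\epsilon}},
\]
so $|1 - e^{2\pi im\theta}| \geq c'/m^{1+\epsilon}$ uniformly on $E_c$, yielding $\int d\sigma(\theta)/|1-e^{2\pi im\theta}|^2 = O(m^{2+2\epsilon}) < \infty$ for every $m \neq 0$.

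Finally, I realize $\sigma$ as the spectral measure of a first-chaos element of a Gaussian automorphism $\tau_\sigma$. It is a standard fact that the Gaussian automorphism built on a symmetric finite measure $\sigma$ is strongly mixing if and only if $\sigma$ is Rajchman, so our $\tau_\sigma$ is strongly mixing. The canonical non-zero mean-zero first-chaos variable $f$ (corresponding to $1 \in L_2(\sigma)$) has spectral measure $\sigma$, and by the spectral reduction and the integrability above, $f$ is a $\tau_\sigma^m$-coboundary for every $m \neq 0$. Transferring the underlying standard Lebesgue space to $(\T,\mathrm{Leb})$ by a measurable isomorphism produces the desired $f \in L_2(\T)$.

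The main obstacle is producing a spectral measure that is simultaneously Rajchman and integrable against $1/|1-\lambda^m|^2$ for all $m \neq 0$: these conditions pull in different directions, since Rajchman prefers spread-out mass while the integrability wants mass to avoid roots of unity of every order. The Diophantine construction resolves this by combining absolute continuity (which gives Rajchman for free) with a uniform lower bound on $|\theta - p/q|$ of the right shape in $q$, which, via the reduction $q' \leq m$, gives the integrability for each $m$ separately.
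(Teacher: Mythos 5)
Your proof is correct and follows essentially the same route as the paper: an absolutely continuous (hence Rajchman) measure supported on a positive-measure set avoiding all roots of unity, realized as the spectral measure of a strongly mixing system via the Gaussian construction, with the transfer functions obtained by spectrally inverting $1-\lambda^m$. The only difference is cosmetic: the paper builds the support set by deleting small open neighborhoods $V_m$ of the $m$-th roots of unity with $\sum p(V_m)\le 1/2$, whereas you use a Diophantine set from Khintchine's theorem to get a quantitative lower bound on $|1-\lambda^m|$; both yield the same integrability.
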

\begin{proof}  For $m\ge 1$, let $\mathcal R_m$ be the $m$-th roots of unity.  Let $p$ be normalized Lebesgue measure on $\mathbb T$.  Let $V_m$ be an open neighborhood of $\mathcal R_m$ such that
$\sum\limits_{m=1}^\infty p(V_m) \le \frac 12$.  Let $K = \mathbb T\backslash \bigcup\limits_{m=1}^\infty V_m$.  Then $K$ is closed and $p(K) > 0$.
So there is a positive Borel probability measure $\nu$ on $K$ which is absolutely continuous with respect to $p$; hence, $\widehat \nu$ is in $C_0(\mathbb Z)$
by the Riemann-Lebesgue Lemma.  Using the GMC, there exists a strongly mixing transformation $\tau$ on $(X,\mathcal B,m)$ and $f_0\in L_2(X)$ such that $\widehat \nu(k) =
\langle f_0,f_0\circ\tau^k\rangle$ for all $k\in \mathbb Z$.  That is, $\nu = \nu_{f_0}^\tau$.

We will use the spectral measure $dE$ associated with the Koopman operator given by $\tau$.
We consider the function $f \in L_2(X)$ given by $f = \int_K dE(\lambda)$. Fix $m\not= 0$.
Because $K$ is disjoint from a neighborhood of $\mathcal R_m$, we can define $F\in L_2(X)$
by $\int_K \frac 1{1-\lambda^m} dE(\lambda)$.  Then $F - F\circ\tau^m = \int_K \frac 1{1-\lambda^m} dE(\lambda) - \int_K \frac 1{1 - \lambda^m}\lambda^m dE(\lambda)
= \int_K dE(\lambda) = f$.  Hence, for all $m \not= 0$, $f = F-F\circ\tau^m$ for some $F\in L_2(X)$.
\end{proof}

\section{Various constructions}\label{various}

We conjecture that we can generally write any mean-zero function as a coboundary 
(possibly with a rank-one map). 
\medskip

\noindent {\bf Conjecture}:  Given a mean-zero $f\in L_r(X), 1 \le r \le \infty$, there exists an ergodic map $\tau$ of $X$ such that $f$
is a $\tau$-coboundary with a transfer function in $L_r(X)$.
\medskip

\noindent In section \ref{cobotherway}, we show that any step function is a coboundary for some ergodic map.  A general construction is given without the need for cutting and stacking.  In the final section \ref{wmcobcutstack}, 
we use cutting and stacking to prove that any bounded measurable function, 
which is not a finite step function, is a coboundary for a weak mixing transformation. 
It is known that any transformation with a 2-step nonzero coboundary is not weak mixing.  It is not known if this is true 
for functions with 3-steps.  Our examples in section \ref{cobotherway} are not weak mixing. 

\subsection{Eigenvalues of two-step functions} Given $f$, there are some inherent restrictions on $\tau$ if $f$ is a $\tau$-coboundary.
Isaac Kornfeld pointed out that sometimes $\tau$ cannot be weakly mixing.  For example,
if $f = 1_E - p(E)$ and $f = h - h\circ \tau$, then $\exp (2\pi i f) = \exp (2\pi i p(E)) = \exp (2\pi ih)\exp (-2\pi i h
\circ \tau)$.  Hence, if $H = \exp(2\pi i h)$, we have $H\circ \tau = \exp(2\pi i p(E) )H$.  So, $\tau$ is not weakly mixing.
Indeed, if $p(E)$ is rational, then $\tau$ is not even totally ergodic.  Also, suppose $f = \sum\limits_{k=1}^K m_k1_{E_k}$
with $m_k \in \mathbb Z, k=1,\dots,K$, and
with $c = \sum\limits_{k=1}^K m_kp(E_k) \in (0,1)$ being a rational number.  Then when $f - \int f\, dp$ is a $\tau$-coboundary, we would
have $\tau$ having a non-trivial eigenvalue $\exp(2\pi i c)$ which is a root of unity, and hence again $\tau$ is not totally ergodic, and certainly not weakly mixing.


\subsection{Oxtoby-Ulam measure space}
\label{cont} Here is another type of restriction.  Consider a compact metric space $X$ and an Oxtoby-Ulam
measure on it.  Then construct an open dense set $U$ of small measure $p(U)$. Let $f = 1_U- p(U)$.  Then $f\in L_\infty(X)$ is mean-zero.
But it is easy to see that for any $p$ preserving
homeomorphism $\tau$, $\|\sum\limits_{k=1}^n f\circ T^k\|_\infty \ge n(1-p(U))$.  So $f$ is not a $\tau$-coboundary for such maps.  This can be extended to the
case where one has a compact metric space $X$ and a non-atomic probability measure $p$ on $X$.  Then one can similarly construct a mean-zero bounded
function which is not a coboundary for any $\tau$ which is a homeomorphism preserving $p$.  This shows that on
the circle $\mathbb T$ there are mean-zero bounded functions that are not coboundaries with respect to any rotation.  The arguments below give a better result via Fourier analysis.

\subsection{Fourier technique}
\label{rot}
There are also Fourier analytic versions of this, which again gives negative results.  For example, it is well-known that despite the Riemann-Lebesgue Lemma,
given any $\rho_n > 0$, with $\lim\limits_{n\to \infty} \rho_n = 0$,
there exists a mean-zero $f \in L_1(\mathbb T)$  such that $|\widehat f(n)| \ge \rho_n$ for all $n$.  Suppose we take $\rho_n = \log n/n$. Take any ergodic rotation
$\tau_\alpha$ of $\mathbb T$.   We claim that $f$ is not a $\tau_\alpha$-coboundary with a transfer function in $L_1(\mathbb T)$.  Indeed, if this were so, then
$|\widehat f(n)|/(1 -\exp(2\pi i n\alpha))$ would be bounded.  But we can choose a rational number $q = m/n$ such that $|q-\alpha|\le 1/q^2$.  Hence, $|n\alpha - m|
\le 1/n$.  But then for these values of $n$, we have  $|\widehat f(n)|/(1 -\exp(2\pi i n\alpha)) \ge \log n$, which is not bounded.

What about with continuous functions?  See de Leuuw, Katznelson, Kahane~\cite{LKK}.
They show that  given $(a_n) \in \ell_2$,
there exists a continuous function $h$ whose Fourier transform goes to zero slower than $|a_n|$.   Suppose we take $a_n = \log n/n$. Take any ergodic rotation
$\tau_\alpha$ of $\mathbb T$.   We claim that $h$ is not a $\tau_\alpha$-coboundary with a transfer function in $L_1(\mathbb T)$.  Indeed, if this were so, then
$|\widehat h(n)|/(1 -\exp(2\phi n\alpha))$ would be bounded.  then proceed as above using diophantine approximation by rational numbers.

There is an open problem in this context.  See Baggett, Medina, and Merrill and ~\cite{BMM} and Baggett~\cite{Baggett}.  The question is: given $f\in C(\mathbb T)$ which is
not a trigonometric polynomial, is the set of rotations of $\mathbb T$ for which $f$ is a coboundary with some measurable transfer function necessarily of first category?
This remains unsolved still.  However, if the transfer function is supposed to be continuous too, then this holds.  Indeed, more generally they show that $f\in L_1(\mathbb T)$
is a coboundary with transfer function in $L_1(\mathbb T)$ for at most a first category set of rotations of $\mathbb T$.
Clearly, this result bears a resemblance to the result in Proposition~\ref{notany}.

The discussion above suggests the interesting question: given a compact Hausdorff space $X$ with a non-atomic probability $p$ on $X$, is there always a mean-zero continuous
function $h$ on $X$ which is not a $\tau$-coboundary with respect to any uniquely ergodic mapping $\tau$ of $X$?

\subsection{Non-measurable solutions}
On the other hand, take any function $f$ on the integers.
Then  define another function $h$ as follows:  $h(0) = 0$, and  for
all $k\ge 1$, $h(k) = h(k-1) - f(k-1)$, and $h(-k) = h(-k+1) + f(-k)$.
Then $h$ is well defined and $f(k) = h(k) - h(k+1)$ for all $k \in \mathbb Z$.

Now take a function $F$ on a probability space $(X,\mathcal B,p)$ and an invertible map $\tau$
of $X$.  For each fixed $x$, let $f(k) = F(\tau^kx)$.  Take the associated $h$ above
and let $H(\tau^kx) = h(k)$ for all $k$.  Then we have
$F(\tau^kx) = H(\tau^kx) -H(\tau^{k+1}x)$ for all $k$.  That is, for any $y$
in the orbit $\{\tau^kx: k \in \mathbb Z\}$, we have $F(y) = H(y) - H\circ \tau(y)$.

We can repeat this construction on each orbit.  In this way, we get a function
$H$ such that $F = H - H\circ \tau$ on all of $X$.

However, to write $X$ as a disjoint union of orbits, we generally need to use the Axiom
of Choice. The resulting equation gives $H$ but $H$ might not be measurable.
For example,  if we write $1 = H - H\circ \tau$, then inherently this implies  the existence of a choice
set $E$ with one point from each orbit. Any such $E$ is not measurable if $\tau$ is measure
preserving.  It also follow that $H$ cannot be measurable.  See Anosov~\cite{Anosov} for
a general version of this phenomenon: if $f$ is integrable and a $\tau$-coboundary with respect
to a measurable transfer function, then $f$ must be mean-zero.

\section{Constructions that are not coboundaries}

It is well-known that if we fix $\tau$, then the coboundaries are a set of first category.  Here is a general version of this when the transfer function is just measurable.  See Rozhdestvenskii~\cite{Rozhdestvenskii}.

\begin{prop}\label{mostfcnsnotcob} Assume $\tau$ is ergodic.
The generic function $f \in L_r(X), 1 \le r \le \infty$ is not a $\tau$-coboundary with
a measurable transfer function.
\end{prop}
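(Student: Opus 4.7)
My plan is to apply the Baire category theorem together with the easy direction of Schmidt's criterion recorded in Remark~\ref{Schmidt}: if $f = h - h\circ\tau$ with $h$ measurable, then $S_n^\tau f = h - h\circ\tau^n$, and since $h$ and $h\circ\tau^n$ have the same distribution (because $p$ is $\tau$-invariant) and every a.e.\ finite measurable function is bounded in probability, for each $\epsilon > 0$ there exists $A$ with $p(|S_n^\tau f| > A) \le \epsilon$ for all $n \ge 1$. Specializing to $\epsilon = 1/2$, the set of $\tau$-coboundaries with measurable transfer function is contained in the countable union $\bigcup_{A=1}^\infty F(A)$, where
\[
F(A) := \bigl\{ f \in L_r(X) : p(|S_n^\tau f| \le A) \ge 1/2 \text{ for all } n \ge 1 \bigr\}.
\]
It therefore suffices to show that each $F(A)$ is closed and nowhere dense in $L_r(X)$.

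For closedness, suppose $f_m \in F(A)$ and $f_m \to f$ in $L_r(X)$. For each fixed $n$, $\|S_n^\tau f_m - S_n^\tau f\|_r \le n\|f_m - f\|_r \to 0$, so $S_n^\tau f_m \to S_n^\tau f$ in measure. For any $\eta > 0$, the inclusion $\{|S_n^\tau f_m| \le A\} \cap \{|S_n^\tau f_m - S_n^\tau f| < \eta\} \subseteq \{|S_n^\tau f| \le A + \eta\}$ together with $p(B \cap C) \ge p(B) + p(C) - 1$ yields $p(|S_n^\tau f| \le A + \eta) \ge 1/2$; letting $\eta \downarrow 0$ and using continuity of $p$ from above gives $f \in F(A)$.

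The empty interior claim follows from a constant perturbation. First, any $f_0 \in F(A)$ must satisfy $\int f_0 \, dp = 0$: by ergodicity of $\tau$ and the pointwise ergodic theorem (applicable since $L_r(X) \subseteq L_1(X)$ on a probability space), $S_n^\tau f_0/n \to \int f_0\, dp$ a.e., and if this integral were nonzero, $p(|S_n^\tau f_0| \le A) \to 0$, contradicting membership in $F(A)$. Now given any $\delta > 0$, set $g = f_0 + \delta$; then $\|g - f_0\|_r = \delta$ is arbitrarily small, but $\int g\, dp = \delta \neq 0$, so the ergodic theorem gives $S_n^\tau g \to +\infty$ a.e., hence $p(|S_n^\tau g| > A) \to 1$, and for $n$ large $g \notin F(A)$. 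Thus no open ball around $f_0$ is contained in $F(A)$, so $F(A)$ is nowhere dense, and $\bigcup_A F(A)$ is a countable union of closed nowhere dense sets, hence meager. The only substantive input is the forward direction of Schmidt's criterion (which is trivial); the remainder is a standard ergodic theorem plus Baire category argument, and I do not anticipate any serious obstacle.
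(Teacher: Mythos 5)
Your proposal is correct as a proof of the literal statement, and it shares the paper's skeleton: both contain the measurable-transfer coboundaries in $\bigcup_{A}\bigcap_{n}\{f: p(|S_n^\tau f|\le A)\ge 1/2\}$ (you derive the easy direction of Schmidt's criterion directly from the telescoping identity $S_n^\tau f = h-h\circ\tau^n$ and measure preservation, which is cleaner than just citing it), and both then run a Baire category argument; your verification that each $F(A)$ is $L_r$-closed is more careful than the paper's, which merely asserts it.

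Where you genuinely diverge is the empty-interior step, and here your shortcut buys less than the paper's argument. You perturb by a constant: since every $f_0\in F(A)$ is mean-zero (by the ergodic theorem), $f_0+\delta\notin F(A)$. But this only exploits the containment $F(A)\subseteq L_r^0(X)$, so your whole proof collapses to the observation that coboundaries lie in the closed, nowhere dense hyperplane of mean-zero functions. That does prove the proposition as worded, but it trivializes it and misses the content the paper needs elsewhere (e.g.\ the contrast with the density of $\mathcal D^\tau$ in $L_r^0(X)$, and the use in Proposition~\ref{notjoint} that coboundaries are first category \emph{among mean-zero functions}). The paper instead perturbs $f_0$ by $\delta f$ where $f$ is a mean-zero, $\pm 1$-valued, nearly $\tau$-invariant function built from the Rokhlin Lemma, so that $|S_n^\tau f|=n$ on a set of measure close to $1$; comparing $|S_n^\tau(f_0+\delta f)|\le A$ and $|S_n^\tau f_0|\le A$ on overlapping sets of positive measure forces $\delta n\le 2A$, a contradiction for large $n$. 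Because that perturbation stays inside $L_r^0(X)$, the paper's argument shows $F(A)$ has empty interior relative to the mean-zero subspace, i.e.\ the generic \emph{mean-zero} function is not a coboundary. You should replace the constant perturbation by such a mean-zero perturbation to recover the stronger, intended statement; the rest of your argument goes through unchanged.
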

\begin{proof} We use Schmidt~\cite{Schmidt} again.
We see that the $\tau$-coboundaries are a subset of $\bigcup\limits_{A=1}^\infty\bigcap\limits_{n=1}^\infty
\{ f\in L_r(X): p\{|S_n^\tau f|\le A\} \ge 1/2\}$.  This is a countable union of $L_r$-topology closed sets
because each $\mathcal B_n(A) = \{ f\in L_r(X): p\{|S_n^\tau f| \le A\} \ge 3/4\}$ is closed in the $L_r$-topology.  We claim that
$\mathcal B(A) = \bigcap\limits_{n=1}^\infty \mathcal B_n(A)$ has no interior.  If this intersection has interior, then
there exists $f_0 \in \mathcal B(A)$ and $\delta > 0$, such that (at least) for all mean-zero $f \in L_1(X)$ with $|f| \le 1$, we
would have $f_0+\delta f \in \mathcal B(A)$.  Use the Rokhlin Lemma to construct a mean-zero function $f = \pm 1$ on a set of
measure at least $1 - \epsilon$,
such that $f\circ \tau = f$ also on a set of measure at least $1 -2\epsilon$.  This is how we can guarantee that $|S_n^\tau f| = n$
a set of measure at least $1 - 4n\epsilon$.   But
$\delta|S_n^\tau f| \le |S_n^\tau(f_0 +\delta f)| + |S_n^\tau(f_0)| \le 2A$ on a set of measure at least $1/2$.  So take first $n$ such that $n > 2A/\delta$ and then take $\epsilon$ so that $1 - 4n\epsilon > 3/4$.  Then on a set of positive measure, we have
$2A < \delta n = \delta |S_n^\tau f| \le 2A$, which is not possible.
\end{proof}

\begin{rem}\label{diffclasses} a) There are various coboundary subspaces one can consider here.  For example, there is the usual $\mathcal C_{(r,r)} = \{h-h\circ \tau: h \in L_r(X)\}$. Since $L_r(X)$ is first category in $L_s(X)$ for any $r > s \ge 0$, it is clear that these subspaces are all different.  We could also
consider $\mathcal C_{(r,s)} = \{f\in L_r(X): f = h - h\circ \tau, h \in L_s(X)\}$.  These spaces are probably all distinct.
\medskip

\noindent b) For example, if $\tau$ is ergodic, to show $C_{(1,0)}$ is different than $C_{(1,1)}$, we would need to construct $f \in L_1(X)$ such that $f$ is a $\tau$-coboundary with a measurable transfer function $h$ that is not integrable.  Here is one such construction.  Let $\mathcal T$ be a Rokhlin tower $\{E_n,\dots,\tau^{n-1}E_n\}$ for $\tau$ of height $n$ with base $E_n$,
and such that $p(E_n) = (1/2)/n$.  Take a portion $D_n \subset E_n$ with $p(D_n) = (1/2)/n^3$.  Let $h_n = \sum\limits_{k=0}^{n-1} 1_{\tau^kD_n}$.  Let
$H = \sum\limits_{n=1}^\infty n^{3/2} h_n$.  Then $H$ is a measurable function with
$\|H\|_1 = \sum\limits_{n=1}^\infty n^{3/2} (n(1/2)/n^3) = \infty$.  But also if $H_N = \sum\limits_{n=1}^N n^{3/2} h_n$, then $H_N \to H$ a.e.
because each $h_n$ is supported on $\bigcup\limits_{k=0}^{n-1} \tau^kD_n$, and $\sum\limits_{n=0}^\infty p(\bigcup\limits_{k=0}^{n-1} \tau^kD_n) = \sum\limits_{n=0}^\infty n(1/2)/n^3 < \infty$.  Also, this shows that $H$ is finite a.e.  Thus,
$f = H - H\circ \tau^{-1}$ is the limit a.e. of the coboundaries
$H_N - H_N\circ \tau^{-1} = \sum\limits_{n=1}^N n^{3/2} (1_{D_n} - 1_{\tau^nD_n})$.  We have  for all $N$,
\[|H_N - H_N\circ \tau^{-1}|
\le \sum\limits_{n=1}^N n^{3/2}(1_{D_n} + 1_{\tau^nD_n}).\]
Hence,
\[\|\sup\limits_{N \ge 1} |H_N - H_N\circ \tau^{-1}|\|_1 \le
\|\sum\limits_{n=1}^\infty n^{3/2}(1_{D_n} + 1_{\tau^nD_n})\|_1 \le \sum\limits_{n=1}^\infty 2n^{3/2}(1/2)/n^3 < \infty.\]
So, the Lebesgue Dominated Convergence Theorem
shows that  $f\in L_1(X)$.  Let $h = -H\circ\tau^{-1}$; then
 $f = h - h \circ \tau$.  Here $h$ is measurable, finite a.e., and not integrable. So $f \in C_{(1,0)}\backslash C_{(1,1)}$.
 \end{rem}

The category statement in Proposition~\ref{mostfcnsnotcob}, shows only indirectly, and not concretely, how to construct functions that are not coboundaries.  For this reason, it is worthwhile to have a better understanding of how to construct directly functions that are not coboundaries.  Here is a simple, basic example.

Suppose we construct a set $E$ such that $p(\bigcup_{k=1}^n \tau^{-k} E) < 1$ for all $n$.  Let $F = X\backslash E$.  Then consider $f = 1_F - p(F)$.  This $f$ is a bounded, mean-zero function.  For any $n$, $S_n^\tau f = n - np(F)$ on $\bigcap\limits_{k=1}^n \tau^{-k} F$.  Since
$p(\bigcup_{k=1}^n \tau^{-k} E) < 1$, we have $p(\bigcap\limits_{k=1}^n \tau^{-k} F) > 0$ for all $n$. Hence,
$\|S_n^\tau f\|_\infty \ge np(E)$ and so $f$ is not a $\tau$-coboundary with transfer function in $L_\infty(X)$.

There are many ways to construct such a set $E$.  Some of these are not explicit, which would spoil the intent of this discussion. For example,

\begin{prop}\label{setgensmall}  The generic set $E$ has $p(\bigcup_{k=1}^n \tau^{-k} E) < 1$ for all $n$
\end{prop}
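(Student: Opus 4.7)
The plan is to apply the Baire category theorem in the measure algebra $(\mathcal{B}, d)$ equipped with the symmetric-difference metric $d(E_1, E_2) = p(E_1 \Delta E_2)$, which is a complete metric space. For each $n \ge 1$, set
\[ \mathcal{G}_n := \bigl\{E \in \mathcal{B} : p\bigl(\textstyle\bigcup_{k=1}^n \tau^{-k} E\bigr) < 1\bigr\}, \]
so that the generic condition is membership in $\bigcap_{n=1}^\infty \mathcal{G}_n$. The plan breaks into two steps: first, show each $\mathcal{G}_n$ is open; second, show each $\mathcal{G}_n$ is dense. Openness follows from the estimate
\[ \Bigl| p\bigl(\textstyle\bigcup_{k=1}^n \tau^{-k} E_1\bigr) - p\bigl(\textstyle\bigcup_{k=1}^n \tau^{-k} E_2\bigr) \Bigr| \le n\, d(E_1, E_2), \]
which holds by measure-preservation of $\tau$ and subadditivity of the symmetric difference under finite unions; thus $E \mapsto p(\bigcup_{k=1}^n \tau^{-k} E)$ is continuous and $\mathcal{G}_n$ is the preimage of $[0,1)$.

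For density, I would use the De Morgan reformulation
\[ X \setminus \textstyle\bigcup_{k=1}^n \tau^{-k} E \;=\; \bigcap_{k=1}^n \tau^{-k}(X \setminus E), \]
so that $E \in \mathcal{G}_n$ is equivalent to the set of points whose first $n$ iterates all lie outside $E$ having positive measure. Given $E_0 \in \mathcal{B}$ and $\epsilon > 0$, pick any measurable $B$ with $0 < p(B) < \epsilon/n$, available because $p$ is non-atomic. Put $A := \bigcup_{k=1}^n (\tau^k B \cap E_0)$ and $E := E_0 \setminus A$. Then $d(E, E_0) = p(A) \le n\,p(B) < \epsilon$. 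For each $x \in B$ and each $k \in \{1, \ldots, n\}$ one of two things happens: either $\tau^k x \notin E_0$, in which case $\tau^k x \notin E$; or $\tau^k x \in \tau^k B \cap E_0 \subseteq A$, in which case $\tau^k x \notin E_0 \setminus A = E$. Either way $B \subseteq \bigcap_{k=1}^n \tau^{-k}(X \setminus E)$, so $p(\bigcup_{k=1}^n \tau^{-k} E) \le 1 - p(B) < 1$, and $E \in \mathcal{G}_n$.

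Applying the Baire category theorem to the sequence of dense open sets $\mathcal{G}_n$ yields $\bigcap_{n=1}^\infty \mathcal{G}_n$ as a dense $G_\delta$ subset of $(\mathcal{B}, d)$, which is exactly the assertion that the generic $E$ satisfies $p(\bigcup_{k=1}^n \tau^{-k} E) < 1$ for every $n$. I do not anticipate a serious obstacle: the only content lies in the density step, which is handled by the simple device of removing from $E_0$ the intersections with the images $\tau^k B$ of a small probe set $B$; in particular, no Rokhlin-tower machinery and no ergodicity or aperiodicity assumption on $\tau$ enters the argument at this stage.
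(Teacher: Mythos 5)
Your proof is correct and follows essentially the same route as the paper: a Baire category argument in the measure algebra with the symmetric-difference metric, where density is obtained by deleting the forward images $\tau^k B$, $1 \le k \le n$, of a small probe set $B$ from the given set, so that $B$ lands in $\bigcap_{k=1}^n \tau^{-k}(X\setminus E)$. The paper's only differences are cosmetic: it phrases the complement as a countable union of closed sets with empty interior and performs the deletion for all $n$ simultaneously using a sequence of probe sets with $\sum_n n\,p(E_n)\le\epsilon$, whereas you treat each $n$ separately and make the openness step explicit with a Lipschitz estimate.
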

\begin{proof} We consider the measurable sets $\mathcal B$ in the usual symmetry pseudo-metric.  Then $\mathcal B$ is a complete-metric space up to sets of measure zero.  Consider the class $\mathcal A$ of such sets that do not have the property above.  Let $\mathcal A_n = \{A\in\mathcal B: p(\bigcup_{k=1}^n \tau^{-k} A) = 1\}$.  Then $\mathcal A = \bigcup\limits_{n=1}^\infty \mathcal A_n$.  But clearly $\mathcal A_n$ is closed in the topology on $\mathcal B$.  It also does not have interior.  Indeed, given $A \in \mathcal A_n$ and $\epsilon > 0$, choose $E_n, p(E_n) > 0$ with $\sum\limits_{n=1}^\infty np(E_n) \le \epsilon$.
Then let $A_0 = A\backslash \left (\bigcup\limits_{n=1}^\infty \bigcup\limits_{k=1}^n \tau^kE_n\right )$.  We have $A_0 \subset A$, and
$p(A_0) \ge p(A) - \epsilon$.  But also, $\bigcup_{k=1}^n \tau^{-k} A_0 \subset X\backslash E_n$ for all $n$.  So $A_0 \notin \mathcal A_n$.
\end{proof}

The proof of Proposition~\ref{setgensmall} shows how to construct a particular set $E\notin \mathcal A$.  We just take $E = X\backslash\left (\bigcup\limits_{n=1}^\infty \bigcup\limits_{k=1}^n \tau^kE_n\right )$.  So we even can arrange that $p(E) \ge 1 -\epsilon$.  If we replace $X$ here by $A$,
then because $\epsilon$ is arbitrary, this is simple process is showing that the class of sets $\mathbb B\backslash \mathcal A$ is dense in $\mathbb B$, which of course also follows from proposition.

\begin{rem}\label{JK}  If we cite the Jewett-Krieger Theorem~\cite{Jewett,Krieger}, we can use the construction in Remark~\ref{cont} to get our explicit function that is not a coboundary.  Indeed, this theorem says that $(X,\mathcal B,p,\tau)$ is isomorphic to $(C,\mathcal B,\lambda,T)$ where $C$ is a Cantor set, $\lambda$
is Lebesgue measure, and $\tau$ is a $\lambda$-preserving, uniquely ergodic homeomorphism of $C$.  This shows that up to the isomorphism, the construction in Remark~\ref{cont} gives us bounded, mean-zero functions that are not $\tau$-coboundaries.
\end{rem}

The constructions above raise the question of how slowly we can arrange $p(\bigcup_{k=1}^n \tau^{-k} A)$ to grow.  In fact, we can get this to grow as slowly as we like.   To show this we will use this consequence of the Rokhlin Lemma.  This lemma was also an important feature in some of the arguments in del Junco and Rosenblatt~\cite{delJR}; see the corresponding lemma in this paper.

\begin{lem}\label{AI}  Suppose $\epsilon > 0$, $0 < \delta < 1$, and $n \ge 1$.  Then there is a set $A\in \mathcal B$ such that $p(A) = \delta$,
and $p\left (\bigcap\limits_{k=1}^n \tau^{-k} A \cap A\right ) \ge (1 - \epsilon)p(A)$.
\end{lem}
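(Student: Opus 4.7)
The plan is to apply Rokhlin's Lemma with a very tall tower and take $A$ to be (essentially) the union of the bottom levels of that tower. The heuristic is that if the tower has height $N$ much larger than $n$, then only points in the top $n$ levels of $A$ get shifted out of $A$ under $\tau^k$ for $1 \le k \le n$, and the top $n$ levels constitute a negligible proportion of $A$. The sole complication is that the Rokhlin base does not have measure exactly $1/N$, so we cannot hit $p(A)=\delta$ on the nose by taking a union of full levels; a small correction at the top will absorb the discrepancy.

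Concretely, I would choose $N$ large (say $N \ge (n+1)/(\delta\epsilon)$) and an auxiliary $\eta \in (0,1-\delta)$, then apply Rokhlin's Lemma to obtain $B \in \mathcal B$ with $B, \tau B, \dots, \tau^{N-1}B$ pairwise disjoint and $p\bigl(\bigcup_{j=0}^{N-1} \tau^j B\bigr) \ge 1 - \eta$. Writing $c = p(B)$, we have $c \le 1/N$ and $Nc \ge 1 - \eta > \delta$. Set $M = \lfloor \delta/c \rfloor$, so that $Mc \le \delta < (M+1)c$ and $0 \le M \le N-1$. Select any $A_1 \subset \tau^M B$ with $p(A_1) = \delta - Mc$ (possible since $\delta - Mc < c = p(\tau^M B)$), and define
\[ A \;=\; A_1 \,\cup\, \bigcup_{j=0}^{M-1} \tau^j B. \]
By construction $p(A) = \delta$.

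For the intersection estimate, note that if $x \in \tau^j B$ with $0 \le j \le M - 1 - n$, then $\tau^k x \in \tau^{j+k} B \subset A$ for every $k = 1,\dots,n$. Therefore
\[ A \cap \bigcap_{k=1}^n \tau^{-k} A \;\supseteq\; \bigcup_{j=0}^{M-1-n} \tau^j B, \]
which has measure $(M-n)c$. From $c \le 1/N$ we get $M + 1 \ge \delta/c \ge \delta N$, while $\delta < (M+1)c$ gives $c > \delta/(M+1)$, so
\[ (M-n)c \;>\; \Bigl(1 - \tfrac{n+1}{M+1}\Bigr)\,\delta \;\ge\; \Bigl(1 - \tfrac{n+1}{\delta N}\Bigr)\,\delta \;\ge\; (1-\epsilon)\,\delta, \]
by the choice of $N$.

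The main obstacle — though it is really bookkeeping rather than genuine difficulty — is reconciling the demand for exact measure $\delta$ with the fact that the Rokhlin base has measure only close to $1/N$. The correction piece $A_1 \subset \tau^M B$ handles this: because $A_1$ sits at level $M$, above every level of $A_0 = \bigcup_{j=0}^{M-1}\tau^j B$, its forward shifts $\tau^k A_1$ land in $\tau^{M+k}B$ and hence outside $A$, so $A_1$ contributes nothing to the intersection — but that is harmless, since the full levels below already supply enough measure to beat $(1-\epsilon)\delta$.
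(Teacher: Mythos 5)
Your proof is correct, and it is exactly the standard Rokhlin-tower argument the paper has in mind: the authors omit the proof of Lemma~\ref{AI} entirely, citing the corresponding lemma in del Junco and Rosenblatt~\cite{delJR}, and your construction (bottom levels of a tall tower plus a small top correction to hit measure exactly $\delta$) is the expected one. The bookkeeping — $M+1 > \delta/c \ge \delta N$, hence $(M-n)c > \bigl(1-\tfrac{n+1}{\delta N}\bigr)\delta \ge (1-\epsilon)\delta$ — all checks out.
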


\begin{prop} \label{gottheslowrate} Suppose $0 <  \epsilon_n < 1/2$ and $\epsilon_n \to 0$ as $n \to \infty$.  Then there exists $E$ such that $p(\bigcup\limits_{k=1}^n \tau^{-k} E) \le 1-\epsilon_n$ for all $n$.
\end{prop}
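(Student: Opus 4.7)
The plan is to take $E=X\setminus F$, where $F=\bigcup_{j\ge 1} B_j$ is a union of almost-invariant sets at increasing scales $N_j$, each produced by Lemma~\ref{AI}. The underlying heuristic is that if $B_j$ satisfies $p\bigl(\bigcap_{k=0}^{N_j}\tau^{-k}B_j\bigr)\ge (1-\eta_j)p(B_j)$ and $n\le N_j$, then because $B_j\subseteq F$ and because adding the $k=0$ term and extending the range to $N_j$ can only shrink the intersection,
\[
p\!\left(\bigcap_{k=1}^n\tau^{-k}F\right)\ \ge\ p\!\left(\bigcap_{k=0}^{N_j}\tau^{-k}B_j\right)\ \ge\ (1-\eta_j)\,p(B_j).
\]
So on each block of values $n\in(N_{j-1},N_j]$, it is enough to choose $p(B_j)$ comparable to $\epsilon_{N_{j-1}+1}$. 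Complementing yields the desired $p\bigl(\bigcup_{k=1}^n\tau^{-k}E\bigr)\le 1-\epsilon_n$.

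To execute this, first replace $\epsilon_n$ by $\epsilon'_n:=\sup_{m\ge n}\epsilon_m$, which is nonincreasing, still strictly less than $1/2$, and still tends to $0$; since $\epsilon'_n\ge\epsilon_n$, proving the bound for $\epsilon'_n$ suffices, so I may assume $(\epsilon_n)$ is nonincreasing. Set $\gamma:=1-2\epsilon_1>0$, let $N_0:=0$, and inductively choose $N_j>N_{j-1}$ so that $\epsilon_{N_j+1}\le \gamma\,2^{-j-2}$ (possible since $\epsilon_n\to 0$). For each $j\ge 1$ I would apply Lemma~\ref{AI} with parameters $\epsilon=1/2$, $\delta=\delta_j:=2\epsilon_{N_{j-1}+1}$, and $n=N_j$ to obtain a set $B_j$ with $p(B_j)=\delta_j$ and $p\bigl(\bigcap_{k=0}^{N_j}\tau^{-k}B_j\bigr)\ge \delta_j/2=\epsilon_{N_{j-1}+1}$. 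Then set $F:=\bigcup_{j\ge 1}B_j$ and $E:=X\setminus F$. For any $n\ge 1$, pick the unique $j$ with $N_{j-1}<n\le N_j$; the containment above together with monotonicity of $\epsilon$ gives $p\bigl(\bigcap_{k=1}^n\tau^{-k}F\bigr)\ge \epsilon_{N_{j-1}+1}\ge \epsilon_n$, as required.

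The one point that demands care is keeping the two constraints on $\delta_j$ compatible: it must be large enough that $\delta_j/2\ge \epsilon_n$ throughout the block $(N_{j-1},N_j]$, yet the total $\sum_{j\ge 1}\delta_j$ must stay strictly below $1$ so that $p(E)>0$ (otherwise $E$ is essentially empty and the statement is vacuous). The strict hypothesis $\epsilon_n<1/2$ is precisely what provides the slack: one has $\delta_1=2\epsilon_1=1-\gamma<1$, and the freedom to let $N_j$ grow arbitrarily quickly forces $\delta_j\le \gamma\,2^{-j}$ for $j\ge 2$, so that $\sum_{j\ge 2}\delta_j\le \gamma/2$ and hence $p(F)\le 1-\gamma/2<1$. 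This is essentially the only place where the hypothesis $\epsilon_n<1/2$ enters the argument.
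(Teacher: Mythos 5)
Your proposal is correct and follows essentially the same route as the paper's proof: complement a union of almost-invariant sets $B_j$ from Lemma~\ref{AI} at scales $N_j$, with $p(B_j)$ comparable to $2\sup_{n>N_{j-1}}\epsilon_n$, and bound $p\bigl(\bigcap_{k=1}^{n}\tau^{-k}F\bigr)$ from below blockwise. Your explicit monotonization of $(\epsilon_n)$ and the bookkeeping ensuring $\sum_j\delta_j<1$ are just a cleaner write-up of the same construction.
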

\begin{proof} We will construct  $(N_m)$ increasing and $\delta_m > 0$ with certain properties.  First, choose $\delta_1 \ge 2\epsilon_n$ for all $n$ and so that $\gamma = 1 - 2\delta_1 >  0$.  Let $N_1 = 1$.
Choose $N_2 > 1$ sufficiently large so that if $\delta_2 = \max\limits_{n \ge N_2} 2\epsilon_n \le \gamma/4$.  Continue inductively choosing $N_{m+1} > N_m$ so that $\max\limits_{n \ge N_{m+1}} 2\epsilon_n \le \gamma/2^{2(m+1)}$ for $m \ge 2$.  Now choose $A_m$ such that $p(A_m) = \delta_m$ and
$p\left (\bigcap\limits_{k=1}^{N_{m+1}} \tau^{-k} A_m \cap A_m\right ) \ge (1/2)p(A_m)$.
Let $E = X\backslash\left (\bigcup\limits_{m=1}^\infty A_m\right )$.
We have $p(E) \ge \gamma/2$. Choose $M\ge 1$.  Then there is a unique $m\ge 1$ so that $N_m \le M < N_{m+1}$.  Also,
\[1 - p(\bigcup\limits_{k=1}^M \tau^{-k}E)\ge \bigcap\limits_{k=1}^{N_{m+1}} \tau^{-k}A_M \ge (1/2) p(A_m) \ge (1/2)\delta_m \ge \epsilon_M.\]
\end{proof}

\begin{prop}\label{estbelow}  Let $\tau$ be ergodic and let $\rho_n/n \to 0$, there exists a mean-zero $f \in L_\infty(X)$ such that $\|S_n^\tau f\|_1 \ge \rho_n$ for large enough $n$.
\end{prop}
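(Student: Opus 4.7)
The plan is to reduce the statement to Proposition~\ref{gottheslowrate} and then apply the centered-indicator construction sketched just before Proposition~\ref{setgensmall}: namely, if $E$ has the property that $p(\bigcup_{k=1}^n \tau^{-k}E)$ grows slowly, then $f = 1_{X\setminus E} - p(X\setminus E)$ has large ergodic sums on the intersection of the iterated complements.

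First I would massage the hypothesis $\rho_n/n \to 0$ into admissible input for Proposition~\ref{gottheslowrate}. Concretely, pick a sequence $(\epsilon_n)$ with $\epsilon_n \to 0$, $0 < \epsilon_n < 1/8$, and $\epsilon_n \ge 4\rho_n/n$ for all sufficiently large $n$; the finitely many early terms may be chosen freely since the target inequality need only hold eventually. Feeding $(\epsilon_n)$ into Proposition~\ref{gottheslowrate} produces a set $E$ with $p(\bigcup_{k=1}^n \tau^{-k}E) \le 1-\epsilon_n$ for every $n$. Inspection of the proof of that proposition shows that $p(E) \ge (1-2\delta_1)/2$, and the constraint $\delta_1 \ge 2\sup_n \epsilon_n$ is compatible with $\delta_1 \le 1/4$ precisely because we arranged $\sup_n \epsilon_n \le 1/8$. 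Thus we may additionally require $p(E) \ge 1/4$.

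With $E$ in hand, set $F = X\setminus E$ and $f = 1_F - p(F)$, a bounded mean-zero function. On the set $B_n := \bigcap_{k=1}^n \tau^{-k}F$ every iterate $\tau^k x$ lies in $F$ for $1 \le k \le n$, so $S_n^\tau f$ is identically equal to $n\,p(E)$ on $B_n$. Taking complements, $p(B_n) = 1 - p(\bigcup_{k=1}^n \tau^{-k}E) \ge \epsilon_n$, and therefore
\[\|S_n^\tau f\|_1 \ge \int_{B_n} |S_n^\tau f|\,dp \ge n\, p(E)\, \epsilon_n \ge n \cdot \frac{1}{4} \cdot \frac{4\rho_n}{n} = \rho_n\]
for all sufficiently large $n$, as required.

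The main difficulty is not conceptual but rather the simultaneous constant bookkeeping: one has to guarantee a universal lower bound for $p(E)$ at the same time that $\epsilon_n$ dominates a fixed multiple of $\rho_n/n$, and both demands feed back into a cap on $\sup_n \epsilon_n$. This is harmless because $\rho_n/n \to 0$ provides arbitrarily much room after discarding finitely many indices, which is exactly what the phrase ``for large enough $n$'' in the conclusion permits. The substantive inputs are Proposition~\ref{gottheslowrate}, which supplies sets whose preimage unions grow arbitrarily slowly, and the elementary observation that centered indicators have explicit constant ergodic sums on the nested intersections.
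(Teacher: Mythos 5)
Your proof is correct and follows essentially the same route as the paper: feed $\epsilon_n \ge 4\rho_n/n$ into Proposition~\ref{gottheslowrate}, set $f = 1_{X\setminus E} - p(X\setminus E)$, and integrate $S_n^\tau f = n\,p(E)$ over $\bigcap_{k=1}^n \tau^{-k}(X\setminus E)$, whose measure is at least $\epsilon_n$. In fact your bookkeeping is cleaner than the paper's, which needs the lower bound $p(E)\ge 1/4$ (not $p(F)\ge 1/4$) and the measure of the intersection $\bigcap_{k=1}^n\tau^{-k}F$ (not the union) in its final display; your extraction of $p(E)\ge (1-2\delta_1)/2$ from the construction supplies exactly the bound the statement of Proposition~\ref{gottheslowrate} alone does not.
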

\begin{proof}
Take $\epsilon_n\to 0$ with $\epsilon_n \ge 4\rho_n/n$ for large enough $n$, and such that $\epsilon_1 \le 3/4$.  Take $E$ as in Proposition~\ref{gottheslowrate}.  Note that $p(E) \le 3/4$.   Consider $F = X\backslash E$, and let $f = 1_F - p(F)$.  We have $p(F) \ge 1/4$.  Then $\|S_n^\tau f\|_1 \ge np(F)
p \left (\bigcup\limits_{k=1}^n \tau^{-k}(X\backslash E)\right ) \ge n\epsilon_n/4 \ge  \rho_n/$ for large enough $n$.
\end{proof}

\begin{rem}\label{estbelowconseq}  Let $\tau$ be ergodic.  Then the argument above gives an explicit, non-category construction, of a bounded mean-zero function which is not a $\tau$-coboundary with an integrable transfer function.  For example, just take $\rho_n = \sqrt n$.
\end{rem}

\begin{rem}\label{hardermethod}
Here is a more complicated argument that turns out to be more general too.
Using Lemma~\ref{AI}, we can construct $A_n$ with $p(A_n) = 1/2$ such that
\[p\left (\bigcap\limits_{k=1}^{n} \tau^{-k} A_n \cap A_n\right )
\ge \frac 12 p(A_n).\]
Consider the function $f_n= 1_{A_n} -1/2$.  We have $f_n \in L_\infty(X)$,
$f_n$ is mean-zero, and $\|f_n\|_1 = \|f_n\|_\infty= 1/2$ because $|f_n| =1/2$ on $X$.  Also,  $\|S_n^\tau f\|_1 \ge (n/2)p\left (\bigcap\limits_{k=1}^{n} \tau^{-k} A_n \right ) \ge
(n/4)p(A_n) = n/8$.  This type of unboundedness is characteristic of what is needed to construct
functions with unbounded norms for the associated cocycles.

First, we take $\epsilon_m > 0$ with $\sum\limits_{m=1}^\infty \epsilon_m \le 1$ and let $f = \sum\limits_{m=1}^\infty
\epsilon_mf_{n_m}$ for suitable $(n_m)$.  This gives $f\in L_\infty(X)$ which is mean-zero and has $\|f\|_\infty \le 1/2$.
Now, we can estimate $\|S_{n_m}^\tau f\|_1$ and arrange that this be unbounded.  As a result, $f$ is a bounded, mean-zero function which is not
a $\tau$-coboundary in any $L_r(X), 1\le r\le \infty$.  The estimate we use is
\[\|S_{n_m}^\tau f\|_1 \ge \|S_{n_m}^\tau (\epsilon_mf_m)\|_1 - \sum\limits_{k=1}^{m-1} \|S_{n_m}^\tau(\epsilon_k f_{n_k})\|_1
-\sum\limits_{k=m+1}^\infty \|S_{n_m}^\tau (\epsilon_k f_{n_k})\|_1\]
\[\ge (\epsilon_m/8)n_m -  \sum\limits_{k=1}^{m-1} \epsilon_k \|S_{n_m}^\tau f_{n_k}\|_1
-\sum\limits_{k=m+1}^\infty (\epsilon_k/2)n_m.\]
We can arrange that $\epsilon_k \to 0$ fast enough for $\sum\limits_{k=m+1}^\infty (\epsilon_k/2) \le \epsilon_m/32$.   A
quickly enough growing geometric series will work for this estimate.  But then with that sequence fixed,
we can also arrange that $n_m$ is chosen sufficiently large that
\[\sum\limits_{k=1}^{m-1} \epsilon_k \|S_{n_m}^\tau f_{n_k}\|_1
\le (\epsilon_m/32)n_m\]
by using the Mean Ergodic Theorem.  This step is not as explicit as one might like.  But now we have
$\|S_{n_m}^\tau f\|_1 \ge (\epsilon_m/16)n_m$ for all $m\ge 1$.  So for sufficiently quickly growing $(n_m)$, e.g. $n_m$ larger than
$1/\epsilon_m^2$ in addition to the other constraints on $(n_m)$, we would have $\|S_{n_m}^\tau f\|_1$ unbounded and therefore $f$ is not a $\tau$-coboundary.

 With a little more attention to the estimates, the method in Remark~\ref{hardermethod} can give a mean-zero, bounded function which is not even a $\tau$-coboundary if we
allow the use of measurable transfer functions.  The series argument used in this construction is the usual one used when one does not want to use a
 Baire category argument, like the one in Proposition~\ref{mostfcnsnotcob}, to construct examples.
\end{rem}

\section{First Category}
In this section, we prove that given any mean zero measurable function $f$ 
that is not identically zero, the set of ergodic measure preserving transformations 
$\tau$ such that $f$ is a $\tau$-coboundary with a measurable transfer function 
is of first category.  We show the collection of such $\tau$ is contained in the complement 
of a dense $G_{\delta}$ set in the weak topology. 

It is sufficient to prove this for the case in which $X = [0,1)$ is the unit interval
equipped with its Borel subsets ${\mathcal B}$, and $\mu$ is Lebesgue measure.  
Let $\Phi$ be the set of invertible measure preserving transformations 
of $([0,1), {\mathcal B}, \mu)$ endowed with the weak topology.  
Let $E_1, E_2, \ldots $ be a countable sequence of measurable sets generating
${\mathcal B}$.  Define the distance between transformations $\phi, \psi \in \Phi$ by 
\[
d(\phi,\psi) \ = \ 
\sum_{i=1}^{\infty}\frac{1}{2^i}
\left[ \, \mu ( \phi E_i \, \triangle \, \psi E_i ) 
\, + \,  \mu ( \phi^{-1} E_i \, \triangle \, \psi^{-1} E_i ) \, \right].
\]
It follows from standard results \cite{Hal56} that $(\Phi,d)$ is a complete metric space,
and that the topology generated by $d(.,.)$ coincides with the weak topology on $\Phi$. 

For each $K\in \natural$ and $\eta > 0$, define the set 
\[
G_K(f, \eta) = \{ \tau \in \Phi : \exists n\in \natural \mbox{ such that } \mu \{ x: | S_n^{\tau} f(x) | > K \} > \eta \} . 
\]

\begin{lem}
\label{lem1}
Given a nonconstant measurable function $f$, there exists $\eta > 0$ such that 
$G_K(f, \eta)$ is a dense subset of $\Phi$ for each $K\in \natural$. 
\end{lem}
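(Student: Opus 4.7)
The plan is to fix $\eta > 0$ depending only on $f$ and then, given $K \in \natural$, $\tau_0 \in \Phi$, and a basic weak neighborhood $W = \{\tau : \mu(\tau A_i \, \triangle \, \tau_0 A_i) < \varepsilon,\ i = 1,\dots,m\}$ of $\tau_0$, to construct a transformation $\tau \in W$ possessing a long $\tau$-orbit segment lying inside a set $H$ where $|f|$ is bounded below. Since $f$ is nonconstant, I can choose $L > 0$ and a measurable set $H$ with $\mu(H) > 0$ such that either $f \ge L$ on $H$ or $f \le -L$ on $H$; without loss of generality I take the former case and set $\eta = \mu(H)/4$.

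First I would invoke weak density of aperiodic transformations in $\Phi$ to replace $\tau_0$ by an aperiodic $\tilde\tau_0 \in W$ (shrinking the neighborhood as needed). Applying Rokhlin's lemma yields a tower $\{B,\tilde\tau_0 B,\dots,\tilde\tau_0^{N-1} B\}$ of large height $N$ with $\mu\bigl(\bigcup_{k=0}^{N-1} \tilde\tau_0^k B\bigr) > 1 - \delta$ for small $\delta$; by a further partition refinement I may assume that each $A_i$ and the set $H$ itself are well approximated by unions of Rokhlin levels. The Birkhoff ergodic theorem applied to $\mathbf{1}_H$ then yields, after subdivision of $B$ according to the pattern of $H$-visits in the column, a sub-base $B' \subseteq B$ of positive measure and an index set $I \subseteq \{0,\dots,N-1\}$ with $|I| \ge \mu(H)N/2$ such that $\tilde\tau_0^k(B') \subseteq H$ for every $k \in I$.

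Writing $I = \{k_1 < k_2 < \dots < k_{|I|}\}$, I would then define $\tau$ by modifying $\tilde\tau_0$ on the column above $B'$: I send $\tilde\tau_0^{k_j}(B') \to \tilde\tau_0^{k_{j+1}}(B')$ for $j = 1,\dots,|I|-1$ and close the cycle with $\tilde\tau_0^{k_{|I|}}(B') \to \tilde\tau_0^{k_1}(B')$, creating a cyclic sub-tower of height $n_* = |I|$ whose levels all lie in $H$. The non-$H$-levels of this column are rerouted into a complementary cycle so that $\tau$ remains a measure-preserving bijection; outside the column above $B'$ the map $\tau$ agrees with $\tilde\tau_0$. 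For every $x$ in the compressed sub-tower the $\tau$-orbit stays in $H$, so $S_{n_*}^\tau f(x) \ge n_* L$; choosing $N$ large makes $n_* > K/L$, whence $|S_{n_*}^\tau f| > K$ on a set of measure $n_* \mu(B')$.

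The main obstacle is the simultaneous control of two competing quantities: the weak distance $d(\tau,\tilde\tau_0)$, which is bounded above by the number of permuted Rokhlin levels times the level measure, and the measure of the good set where $|S_{n_*}^\tau f| > K$, which must exceed $\eta$ independently of $\varepsilon$ and $K$. A single sub-column balances these two quantities only at the same order, so the tension cannot be resolved with one sub-column alone. The remedy is to run the construction in many parallel sub-columns above disjoint sub-bases of $B$, amortizing the modifications so that each contributes only a tiny shift at the level of the permutation $\sigma$ of $\{0,\dots,N-1\}$ induced by $\tau$, while the $H$-contributions accumulate to total measure at least $\eta$. Executing this combinatorial bookkeeping, and verifying that the resulting $\sigma$ differs from the cyclic shift $k \mapsto k+1$ on sufficiently few indices so that $\mu(\tau A_i \, \triangle \, \tilde\tau_0 A_i) < \varepsilon$ for each $i$, is the most delicate step of the argument.
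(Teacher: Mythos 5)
Your mechanism for making $|S_n^{\tau}f|$ large --- closing up the $H$-levels of the column into $\tau$-invariant cycles contained in $H=\{f\ge L\}$ --- is incompatible with the weak approximation you need, and this is a genuine gap, not just a delicate step. By construction $\tau$ has an invariant set $G\subseteq H$ (the union of all compressed cycles) with $\mu(G)\ge\eta=\mu(H)/4$, so $\tau$ maps a subset of $H$ of measure $\ge\eta$ back into $H$. If $\tau_0$ is, say, mixing, then $\mu(H\cap\tau_0 H)\approx\mu(H)^2$, and hence
\[
\mu(\tau H\,\triangle\,\tau_0 H)\;\ge\;\mu(G\setminus\tau_0 H)\;\ge\;\eta-\mu(H\cap\tau_0 H)\;\ge\;\eta/2
\]
for $\mu(H)$ small. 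Since the $E_i$ generate $\mathcal B$ and $\mu(\tau(E\cap F)\triangle\tau_0(E\cap F))\le\mu(\tau E\triangle\tau_0 E)+\mu(\tau F\triangle\tau_0 F)$, a sufficiently small $d$-ball around $\tau_0$ forces $\mu(\tau H\triangle\tau_0 H)$ to be small; your $\tau$ can therefore never enter such neighborhoods. Your own estimate of the weak distance (number of permuted levels times level measure) already signals this: that quantity is of order $N\mu(B')\cdot(\text{number of sub-columns})\gtrsim\eta$, not of order $\varepsilon$, and no amount of amortizing over parallel sub-columns can reduce it, because the total mass of points whose orbits you must reroute into $H$ is forced to be $\ge\eta$ by the very conclusion you want. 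The difficulty is not bookkeeping; the construction itself produces maps bounded away from $\tau_0$ in the weak topology.

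The paper's proof avoids this by keeping the support of the perturbation small while letting its effect on Birkhoff sums be felt by a large set of points. It uses that $f$ takes values $>\alpha$ and $<-\alpha$ each on sets of measure $>\beta$ (your ``without loss of generality $f\ge L$ on $H$'' discards exactly the structure needed), selects small subsets $F_3\subseteq\{f>\alpha\}$ and $F_4\subseteq\{f<-\alpha\}$ of measure about $\beta/L$ with $L$ large, and redefines the map only by exchanging orbit positions between $F_3$ and $F_4$ inside a tall Rokhlin tower. Then $\mu\{\sigma\ne\tau_0\}=O(\beta/L)$ is as small as desired, so $\sigma$ lies in the prescribed weak neighborhood, while every orbit segment of length $H=N/\alpha$ passes through $F_3$ and $F_4$ many times and the systematic exchange drives $S_H^{\sigma}f$ up on a set of measure bounded below by a fixed $\eta$ depending only on $\beta$. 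If you want to repair your argument, you must replace the ``trap the orbit in $H$'' idea with a perturbation supported on a set whose measure tends to $0$ as the neighborhood shrinks, exploiting both signs of $f$ to create the drift.
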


\begin{proof} 
Let $\tau$ be any e.m.p.t. on $X$.  
Choose $\alpha, \beta > 0$ such that 
$\mu ( \{ x \in X : f(x) > \alpha \} ) > \beta$ and $\mu ( \{ x \in X : f(x) < - \alpha \} ) > \beta$.  
Let $B_1 = \{ x \in X : f(x) > \alpha \}$ and $B_2 = \{ x : f(x) < - \alpha \}$.  
Let $\delta > 0$.  Choose $k_1 \in \natural$ and sets $F_1, F_2 \in \Join_{i=1}^{k_1} E_i$ 
such that $\mu (B_j \triangle F_j) < \delta$ for $j = 1,2$.  Given $L > 0$, choose 
$k_1 \in \natural$ such that there exist 
$F_3, F_4 \in \Join_{i=1}^{k_2} E_i$ such that 
$\mu ( B_1 \cap F_3 ) > (1 - \delta ) \mu (F_3)$ and 
$\mu ( B_2 \cap F_4 ) > (1 - \delta ) \mu (F_4)$ and 
$\mu (F_3) > {\beta} / {(L + 1)}$ and $\mu (F_4) > {\beta} / {(L + 1)}$ and 
$\mu (F_3) < {\beta} / {L}$ and $\mu (F_4) < {\beta} / {L}$.  
Choose $N \in \natural$ such that for $n > N$ 
\[ 
\mu ( \{ x : | \frac{1}{n} \sum_{i=0}^{n-1} I_{F_3} (\tau^i x) - \mu (F_3) | > \delta \mu (F_3) \} ) < \delta 
\]
and 
\[ 
\mu ( \{ x : | \frac{1}{n} \sum_{i=0}^{n-1} I_{F_4} (\tau^i x) - \mu (F_4) | > \delta \mu (F_4) \} ) < \delta . 
\]
Let $h = {2N} / {\alpha}$.  Choose a Rohklin tower for $\tau$ of height $h$.  
Consider orbits of length $H = {N} / {\alpha}$ that begin in the bottom half of the Rohklin tower.  
We will map these points in the orbit of $\tau$ that fall in $F_3$ to points in the orbit that fall 
in $F_4$.  This produces a new e.m.p.t. $\sigma$.  
Since the sets $F_3$ and $F_4$ have small measure, $\sigma$ will be close to $\tau$ 
in the weak topology. 
Also, since $h$ is sufficiently large, enough points fall in $F_3$ and $F_4$ such that 
the resulting 
$ || S_{H}^{\sigma} f ||_1 $ will be large.  Thus, with the correct choice of the parameters, 
then $\sigma \in G_K ( f, \eta )$.  Hence, $G_K (f, \eta)$ is dense in $\Phi$. 
\end{proof}

\begin{lem}
\label{lem2}
For any nonconstant measurable function $f$, for $\eta > 0$ and $K\in \natural$, the set 
$G_K(f, \eta)$ is open in $\Phi$. 
\end{lem}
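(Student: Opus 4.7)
The plan is to fix $\tau \in G_K(f,\eta)$ together with a witness $n \in \mathbb{N}$ for which $\mu\{|S_n^\tau f| > K\} > \eta$, and then exhibit a weak neighborhood of $\tau$ contained in $G_K(f,\eta)$. The idea is to verify that the map $\sigma \mapsto S_n^\sigma f$ is continuous in measure at $\tau$; because the conditions defining $G_K(f,\eta)$ are strict inequalities, there is enough slack to absorb a small $L^0$-perturbation.

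First I would gain that slack explicitly. Since $\{|S_n^\tau f| > K\} = \bigcup_{j \ge 1} \{|S_n^\tau f| \ge K + 1/j\}$ is an increasing union, continuity of measure produces $K' > K$ and $\eta'' > \eta$ with $\mu\{|S_n^\tau f| > K'\} > \eta''$. Fix $\delta > 0$ small enough that $2\delta \le K' - K$ and $3\delta < \eta'' - \eta$. Because $f$ is measurable and $(E_i)$ generates $\mathcal{B}$, I can approximate $f$ in measure by a finite step function $g = \sum_{i=1}^m c_i 1_{F_i}$ with each $F_i$ lying in the finite algebra generated by $E_1, \ldots, E_L$ for some $L$, satisfying $\mu\{|f - g| > \delta/n\} < \delta/n$. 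Since $\tau$ is measure-preserving, this propagates to $\mu\{|S_n^\tau f - S_n^\tau g| > \delta\} < \delta$, and the identical estimate holds with any measure-preserving $\sigma$ in place of $\tau$.

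Next I would compare $S_n^\tau g$ with $S_n^\sigma g$ for $\sigma$ close to $\tau$. Since $\sigma \mapsto \sigma^{-k}$ is weakly continuous on $\Phi$, for each fixed $k$ and $i$ the function $\sigma \mapsto \mu(\sigma^{-k} F_i \triangle \tau^{-k} F_i)$ is continuous at $\tau$ and vanishes there. Hence for any $\rho > 0$ the set
\[
U = \{\sigma \in \Phi : \mu(\sigma^{-k} F_i \triangle \tau^{-k} F_i) < \rho \text{ for all } 0 \le k \le n-1 \text{ and } 1 \le i \le m\}
\]
is a weak neighborhood of $\tau$. Choosing $\rho < \delta/(mn)$ forces $S_n^\sigma g = S_n^\tau g$ off a set of measure less than $\delta$, and combined with the preceding step this gives $|S_n^\sigma f - S_n^\tau f| \le 2\delta$ off a set of measure at most $3\delta$.

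Putting the pieces together, for every $\sigma \in U$,
\[
\mu\{|S_n^\sigma f| > K\} \ge \mu\{|S_n^\tau f| > K'\} - \mu\{|S_n^\sigma f - S_n^\tau f| > K' - K\} > \eta'' - 3\delta > \eta,
\]
so $\sigma \in G_K(f,\eta)$, proving openness. The only delicate point is the reduction to the step function $g$: because $f$ is merely measurable and need not lie in any $L^r$, one cannot approximate in norm and must instead work with convergence in measure. Once that is done, everything else amounts to routine estimates together with the standard continuity of $\sigma \mapsto \sigma^{\pm k}$ in the weak topology.
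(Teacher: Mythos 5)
Your argument is correct and complete. Note that the paper states Lemma~\ref{lem2} without proof, so there is nothing to compare against directly; the closest analogue in the text is the unelaborated claim in the proof of Proposition~\ref{notany} that $\|S_n^{\tau_s}f\|_1\to\|S_n^\tau f\|_1$ when $\tau_s\to\tau$ weakly, and your proof supplies exactly the missing substance: continuity in measure of $\sigma\mapsto S_n^\sigma f$ at $\tau$, obtained by (i) creating slack in both the level $K$ and the measure bound $\eta$ via continuity of measure, (ii) reducing to a finite step function $g$ by approximation in measure (correctly exploiting measure-preservation to propagate $\mu\{|f-g|>\delta/n\}<\delta/n$ to $\mu\{|S_n^\sigma f-S_n^\sigma g|>\delta\}<\delta$ uniformly in $\sigma$), and (iii) using that $\Phi$ is a topological group in the weak topology so that $\sigma\mapsto\mu(\sigma^{-k}F_i\,\triangle\,\tau^{-k}F_i)$ is continuous and vanishes at $\tau$. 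The final union bound and triangle inequality are routine, and the same witness $n$ serves for every $\sigma$ in the neighborhood, which is what openness requires. One could streamline slightly by observing that the step (iii) continuity holds for arbitrary measurable sets, not just sets from the algebra generated by $E_1,\dots,E_L$, since the metric $d$ generates the full weak topology; but as written the argument is sound.
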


\begin{thm}
Given a mean zero, non-identically zero function $f$, 
there exists a dense $G_{\delta}$ subset $G$ of $\Phi$ such that 
$f$ is not a coboundary for each $\tau \in G$ with measurable transfer function. 
\end{thm}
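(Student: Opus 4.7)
The plan is to assemble the preceding two lemmas using the Baire category theorem in the complete metric space $(\Phi,d)$, together with Schmidt's characterization of measurable coboundaries recalled in Remark~\ref{Schmidt}. Since $f$ is mean zero and not identically zero, $f$ is in particular nonconstant, so Lemma~\ref{lem1} furnishes an $\eta > 0$ for which each $G_K(f,\eta)$ is dense in $\Phi$. By Lemma~\ref{lem2} each $G_K(f,\eta)$ is also open. I would therefore set
\[ G \ = \ \bigcap_{K=1}^\infty G_K(f,\eta), \]
and invoke the Baire category theorem on the complete metric space $(\Phi,d)$ to conclude that $G$ is a dense $G_\delta$ subset of $\Phi$.

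It remains to verify that for every $\tau \in G$, $f$ fails to be a $\tau$-coboundary with a measurable transfer function. Schmidt's theorem states that if $f = h - h\circ\tau$ with $h$ measurable, then the cocycle sums $\{S_n^\tau f\}_{n\ge 1}$ are bounded in probability: for every $\varepsilon > 0$ there exists $A > 0$ such that $\mu\{x : |S_n^\tau f(x)| > A\} \le \varepsilon$ for every $n \in \mathbb{N}$. Applying this with $\varepsilon = \eta$ and choosing any integer $K \ge A$, one obtains $\mu\{x : |S_n^\tau f(x)| > K\} \le \eta$ for all $n$, which means $\tau \notin G_K(f,\eta)$ and hence $\tau \notin G$. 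Contrapositively, $\tau \in G$ forces $f$ not to be a measurable $\tau$-coboundary, which is the desired conclusion.

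The main obstacle has already been handled in Lemma~\ref{lem1}: the uniform density of $G_K(f,\eta)$ in a single $\eta$ is where the Rokhlin-tower perturbation argument does the real work, and the openness in Lemma~\ref{lem2} is the other essential input. Given those two lemmas, the theorem itself is a bookkeeping combination of Baire category with Schmidt's criterion, and one should note that the same argument simultaneously yields the stronger statement that the collection of $\tau$ for which $f$ is a measurable coboundary is contained in $\Phi \setminus G$, i.e.\ in a set of first category, matching the framing set out at the start of the section.
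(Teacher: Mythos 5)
Your proof is correct and follows essentially the same route as the paper: Lemma~\ref{lem1} gives the $\eta$ for which each $G_K(f,\eta)$ is dense, Lemma~\ref{lem2} gives openness, and the Baire category theorem yields the dense $G_\delta$ intersection. The one place you go beyond the paper is in spelling out, via Schmidt's criterion, why $\tau\in G$ rules out a measurable transfer function — a step the paper's proof leaves implicit — and that addition is accurate.
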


\begin{proof}
Let $f$ be a nonconstant measurable function on $X$.  
By Lemma \ref{lem1}, there exists $\eta > 0$ such that $G_K(f, \eta)$ is dense for $K\in \natural$. 
By Lemma \ref{lem2}, $G_K(f, \eta)$ is open in $\Phi$.  
Thus, 
\[
G = \bigcap_{k=1}^{\infty} G_k (f, \eta) 
\]
is a dense $G_{\delta}$ subset of $\Phi$ and $f$ is not a coboundary for each $\tau \in G$ 
with a measurable transfer function. 
\end{proof}

\section{Finite Step Function Coboundaries}
\label{cobotherway}

In the following sections, we show that any bounded measurable mean-zero function
defined on $(X,\mathcal B,p)$ may be realized as a coboundary
for an ergodic map with bounded measurable
transfer function. It is well known that mean-zero step functions with 2 steps
are coboundaries for either an irrational rotation, or for a transformation
with discrete spectrum. If the base of one of the steps has measure $\lambda$,
then any transformation with this coboundary must have $\lambda$ as an eigenvalue,
and cannot be weak mixing.  See Section~\ref{various}

In this section, we extend this to show any finite step function may be realized
as a coboundary to either an ergodic translation on a torus, a discrete spectrum transformation,
or a finite extension of an ergodic translation. Also, this result may be extended
to bounded countable step functions.

In the final section, we show any bounded measurable function that
is not a countable step function may be realized as a coboundary
for a weak mixing transformation with bounded transfer function.
The weak mixing transformation is constructed iteratively 
using cutting and stacking. Also, the transfer function is constructed directly.

\begin{thm}
\label{finite_case1}
Let $(X, \B , p )$ be a Lebesgue probability space and $m$ a positive integer. Suppose $f$ is a measurable, mean-zero function with $m$ steps. In other words, the function $f$ is of the form: $f=\sum\limits_{i=1}^m a_iI_{A_i}$. If the real numbers $p (A_i)$, $i=1,2,\ldots ,m-1$ are rationally independent, then $f$ is a coboundary for an ergodic transformation isomorphic to a translation on the $(m-1)$-dimensional torus.
\end{thm}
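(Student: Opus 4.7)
The strategy is to realize $f$ as a bounded coboundary for an ergodic torus translation, then transfer the construction back to $X$ via a measure isomorphism. Set $\alpha_i = p(A_i)$ and let $T(\vec x)=\vec x+(\alpha_1,\ldots,\alpha_{m-1})$ on $\T^{m-1}$. Rational independence of $\alpha_1,\ldots,\alpha_{m-1}$, combined with the identity $\sum_{i=1}^m \alpha_i = 1$ and $\alpha_m>0$, forces $1,\alpha_1,\ldots,\alpha_{m-1}$ to be rationally independent as well, so $T$ is ergodic on $\T^{m-1}$. Using the mean-zero hypothesis $\sum_i a_i\alpha_i=0$, I would rewrite
\[
f \;=\; \sum_{i=1}^{m-1}(a_i-a_m)\bigl(I_{A_i}-\alpha_i\bigr),
\]
reducing the task to realizing each mean-zero two-step function $I_{A_i}-\alpha_i$ as a coboundary for a single transformation on $X$ isomorphic to $T$.

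The one-dimensional building block is the classical sawtooth identity: on $\T$, the bounded function $\psi(x)=\{x-\alpha\}-\tfrac12$ satisfies $\psi(x)-\psi(x+\alpha)=I_{[0,\alpha)}(x)-\alpha$. Applied in the $i$-th coordinate, $\psi_i(\vec x)=\{x_i-\alpha_i\}-\tfrac12$ exhibits the coordinate strip $C_i=\{\vec x:x_i\in[0,\alpha_i)\}\subset \T^{m-1}$ as $T$-cohomologous to the constant $\alpha_i$, with bounded transfer function. The key step is to construct a measure-preserving isomorphism $\phi:X\to\T^{m-1}$ sending the partition $\{A_i\}$ to a partition $\{B_i\}$ of $\T^{m-1}$ with $p(B_i)=\alpha_i$, arranged so that each $I_{B_i}-\alpha_i$ remains a bounded $T$-coboundary. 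Because the strips $C_i$ overlap, they cannot directly serve as the partition; instead I would build the $B_i$ iteratively as nested rectangular sub-blocks: take $B_1=C_1$, and at stage $i$ choose $B_i$ inside $\bigcap_{j<i}\{\vec x:x_j\ge\alpha_j\}$ as a product of coordinate intervals in the unused directions with total measure $\alpha_i$. The indicator $I_{B_i}-\alpha_i$ is then written as a bounded coboundary by pairing a sawtooth in the active coordinate with indicators of the fixed complementary half-strips, which $T$ permutes in a controlled, telescoping fashion.

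The main obstacle will be the combinatorial bookkeeping in this iterative construction: verifying that the combined transfer functions remain bounded and measurable on all of $\T^{m-1}$, and that summing the elementary pieces $(a_i-a_m)(I_{B_i}-\alpha_i)$ reproduces $f\circ\phi^{-1}$ on the nose. Once this is accomplished, the resulting bounded measurable $h$ on $\T^{m-1}$ satisfies $h-h\circ T=f\circ\phi^{-1}$, and hence $f=(h\circ\phi)-(h\circ\phi)\circ\sigma$ on $X$, where $\sigma=\phi^{-1}\circ T\circ\phi$ is an ergodic transformation on $X$ isomorphic to the translation $T$ on the $(m-1)$-dimensional torus.
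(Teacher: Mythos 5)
Your opening reduction $f=\sum_{i=1}^{m-1}(a_i-a_m)(I_{A_i}-\alpha_i)$ is correct, and the sawtooth identity does handle $B_1=C_1$, because the interval $[0,\alpha_1)$ has length in $\alpha_1\Z+\Z$. The gap is in the step where you place the remaining cells $B_i$, $i\ge 2$, as products of coordinate intervals and assert that $I_{B_i}-\alpha_i$ is a bounded coboundary for the translation $T$. That assertion is false in general: the ``active'' interval at stage $i$ must have length $\beta_i=\alpha_i/\prod_{j<i}(1-\alpha_j)$, and by Kesten's theorem $I_{[0,\beta)}-\beta$ is a bounded coboundary for the circle rotation by $\alpha_i$ only when $\beta\in\alpha_i\Z+\Z$, which $\beta_i$ has no reason to satisfy; moreover, in dimension $\ge 2$ a genuine box is essentially never a bounded remainder set for an ergodic translation (Liardet), and its Birkhoff sums grow without bound. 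The cross terms you would need to ``telescope,'' of the shape $(u(x_1)-u(x_1+\alpha_1))\,I_{[0,\beta_2)}(x_2)$, are exactly the ones that fail to be coboundaries, so the proposed bookkeeping cannot close. (A secondary error: rational independence of $\alpha_1,\dots,\alpha_{m-1}$ together with $\sum_i\alpha_i=1$ and $\alpha_m>0$ does not force $1,\alpha_1,\dots,\alpha_{m-1}$ to be rationally independent --- take $\alpha_1=\sqrt2/10$, $\alpha_2=1/2-\sqrt2/10$, $\alpha_3=1/2$, where $2(\alpha_1+\alpha_2)=1$ --- so your translation need not be ergodic; the hypothesis has to be read as including $1$ in the independent set.)

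The paper sidesteps the bounded-remainder problem by never using boxes. It realizes the translation as the map $\tau_\alpha(x)=x+\alpha-e_j$ on a bounded fundamental domain $Y$ inside a hyperplane of $\R^m$, where $j$ is the index of the coordinate about to become largest, and takes $B_j$ to be the set of points at which the $j$-th coordinate wraps. The number $p_j(n)$ of visits to $B_j$ in $n$ steps then satisfies $|n\alpha_j-p_j(n)|<2m$ simply because the $j$-th coordinate of every point of $Y$ lies in a window of width less than $2m$, and the mean-zero hypothesis gives $|\sum_j a_jp_j(n)|=|\sum_j a_j(p_j(n)-n\alpha_j)|\le 2m\sum_j|a_j|$, i.e.\ uniformly bounded Birkhoff sums; Lin--Sine then produces a bounded transfer function. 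In other words, the partition into cells of the prescribed measures is manufactured so that every cell is a bounded remainder set by construction (these cells are not boxes in the torus coordinates). To rescue your approach you would have to replace your rectangular $B_i$ by such dynamically defined cells, at which point you have essentially reconstructed the paper's argument.
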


\noindent
{\bf Proof:}
For $i=1,2,\ldots ,m$, let $\alpha_i = p (A_i)$
and $a_i = f(x)$ for $x\in A_i$.
Define $\alpha=(\alpha_1,\alpha_2,\ldots ,\alpha_m)$. Given a point
$x=(x_1,x_2,\ldots ,x_m)\in \real^m$, let
$j$ be the minimum index such that
$x_j+\alpha_j \geq x_i+\alpha_i$ for $i\neq j$,
and define $\tau_{\alpha}(x)=(y_1,y_2,\ldots ,y_m)$ where
$y_j=x_j + \alpha_j - 1$ and
$y_i = x_i + \alpha_i$ for $i\neq j$.
Let $Y$ be the closure of the orbit of the origin under $\tau_{\alpha}$.
Thus, $Y=\overline{\{\tau_{\alpha}^i(0):i\in \integers \}}$.
The set $Y$ is a closed subset of the hyperplane passing through
the origin: $\{x_1+x_2+\ldots +x_m = 1: (x_1,x_2,\ldots ,x_m)\in \real^m \}$.
The transformation $\tau_{\alpha}$ restricted to $Y$ is isomorphic to rotation
by $(\alpha_1,\alpha_2,\ldots ,\alpha_{m-1})$ on the $(m-1)$-dimensional
torus. Note, this is also isomorphic to rotation by
$(\alpha_1,\alpha_2,\ldots ,\alpha_{i-1},\alpha_{i+1},\ldots ,\alpha_m)$
for $i=1,2,\ldots ,m$.
Let $\nu$ be normalized $(m-1)$-dimensional Lebesgue measure on $Y$.

For each $j$, $1\leq j\leq m$, let
$$B_j = \{(x_i)\in Y: y_j=x_j+\alpha_j-1, (y_i)=\tau_{\alpha}(x_i) \}.$$
It is clear that $p (B_j) = \alpha_j$ for $1\leq j\leq m$.
Let $\phi:X\to Y$ be an invertible map
such that $\nu (\phi (A_i) \triangle B_i)=0$ for $1\leq i\leq m$.
The following step function $f_Y$ is a coboundary
for the ergodic transformation $\tau_{\alpha}$:
$$f_Y(x) = f(\phi^{-1}x).$$
To see this, first observe: for $(x_i) \in Y$, $x_i > -1$, and hence
$x_i < m-1$ for $1\leq i\leq m$.
Fix $x=(x_i)\in Y$ and $n\in \natural$.  Define
$$p_j = \sum\limits_{i=1}^{n} I_{B_j}(\tau_{\alpha}^i(x)).$$
The number $p_j$ corresponds to the number of times we subtract 1
when iterating with $\tau_{\alpha}$.
Thus,
$$|x_j + n\alpha_j - p_j| < m - 1$$
and hence
$$|n\alpha_j - p_j| < 2m .$$
Therefore,
\begin{eqnarray}
|\sum\limits_{i=1}^{n} f_Y(\tau_{\alpha}^ix)| &=&
|\sum\limits_{i=1}^{n} \sum\limits_{j=0}^m a_j I_{B_j}(\tau_{\alpha}^ix)| \\
&=& |\sum\limits_{j=0}^m a_j p_j| = |n\sum\limits_{j=1}^m a_j \frac{p_j}{n}| \\
&\leq& n|\sum\limits_{j=1}^m a_j (\frac{p_j}{n} - \alpha_j)| +
n|\sum\limits_{j=1}^m a_j \alpha_j | \\
&<& n\sum\limits_{j=1}^m |a_j| \frac{2m}{n} = 2m\sum\limits_{j=1}^m |a_j| .
\end{eqnarray}
By \cite{LS}, $f_Y$ is a coboundary for $\tau_{\alpha}$
with an $L_\infty(X)$ transfer function $h$.
Therefore, $h\circ \phi$ is the transfer function for coboundary $f$
and ergodic transformation $\phi^{-1}\circ \tau_{\alpha}\circ \phi$.
$\Box$

\begin{thm}
\label{finite_case2}
Suppose $f$ is a measurable finite step mean-zero function. Then $f$ is a coboundary
for an ergodic map $\tau$ in one of the following categories:
\begin{enumerate}
\item $\tau$ is a transformation with discrete spectrum;
\item $\tau$ is a product of rotations;
\item $\tau$ is a finite extension of a product of rotations.
\end{enumerate}
\end{thm}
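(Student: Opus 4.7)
My approach is to classify by the $\Q$-linear structure of the measures $\alpha_i = p(A_i)$ and reduce each case either to Theorem~\ref{finite_case1} or to a variant of its construction. Let $V$ denote the $\Q$-span of $\alpha_1,\dots,\alpha_{m-1}$ in $\R$, and let $d = \dim_\Q(V/(V\cap\Q))$, which counts how many of the measures are rationally independent modulo $\Q$.

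If $d=m-1$, the measures are rationally independent and Theorem~\ref{finite_case1} directly produces an ergodic translation on the $(m-1)$-torus, placing $\tau$ in category (2). If $d=0$, every $\alpha_i$ is rational; choosing a common denominator $q$, I would model $X$ on $\Z/q$ with the uniform measure and $\tau$ the cyclic shift $k\mapsto k+1$, identifying each $A_i$ with any union of $q\alpha_i$ singletons. The coboundary equation $f=h-h\circ\tau$ then reduces to a finite linear recursion on $\Z/q$ whose only compatibility condition is $\sum f = 0$, which is exactly the mean-zero hypothesis; a bounded $h$ is then obtained by telescoping, and $\tau$ is a finite rotation with purely discrete spectrum (category (1)).

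For the remaining mixed case $0<d<m-1$, I would write $\alpha_i = \sum_{j=1}^{d} c_{ij}\beta_j + r_i$ with $\beta_1,\dots,\beta_d$ rationally independent irrationals and $c_{ij},r_i\in\Q$ sharing a common denominator $q$. The target model is $\T^d\times\Z/q$ with $\tau(y,k)=(y+\beta,\,k+1)$ for $\beta=(\beta_1,\dots,\beta_d)$; ergodicity follows from the rational independence of $\beta_1,\dots,\beta_d,1$, and $\tau$ is visibly a finite extension of a product of rotations (category (3)). I would build the sets $A_i$ as finite unions of products of torus rectangles (whose side-lengths are the rational combinations of the $\beta_j$) with singletons in $\Z/q$ (indexed by the rational parts $r_i$), adjusted so that $p(A_i)=\alpha_i$ and so that a $\tau$-step moves points according to the hyperplane-embedding pattern of Theorem~\ref{finite_case1} on the torus factor while cycling through the $\Z/q$ fiber to absorb the rational dependencies.

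The hard part is verifying that $\|S_n^\tau f\|_\infty$ remains uniformly bounded in the mixed case, so that Lin--Sine~\cite{LS} delivers a bounded transfer function. The argument should parallel the cocycle estimate in the proof of Theorem~\ref{finite_case1}: on the $\T^d$ factor, each coordinate's fractional part can overshoot the expected count $n\alpha_i$ by only $O(1)$ in $n$ iterates, exactly as in the hyperplane bound $|n\alpha_j - p_j|<2m$ used there; on the $\Z/q$ factor the orbit is $q$-periodic, so the contribution from the rational parts is trivially bounded by a multiple of $q\max_i|a_i|$. Combining these uniform bounds with Lin--Sine gives $h\in L_\infty$, and pulling back through the measure isomorphism between the model and $(X,\B,p)$ produces the claimed bounded coboundary representation of $f$.
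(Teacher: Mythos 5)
Your case division (all measures rational, all rationally independent, mixed) is the same as the paper's, and your fully independent case coincides with the paper's Case~(2) via Theorem~\ref{finite_case1}. The other two cases have gaps. In the rational case you take $\tau$ to be the cyclic shift on $\Z/q$. But $(X,\B,p)$ is a nonatomic Lebesgue space, and a transformation all of whose orbits are $q$-periodic is not ergodic there, so a ``finite rotation'' is not an admissible $\tau$. The paper's Case~(1) addresses exactly this: it stacks the pieces of width $1/q$ into a column $C_0$ of height $q$ and then continues to cut and stack without spacers, producing an ergodic rank-one map with discrete spectrum (an odometer over your $\Z/q$ skeleton); because $f$ is constant on the levels of $C_0$ and sums to zero over one pass through the column, the cocycle sums remain bounded by $q\max_i|a_i|$ no matter how the column is further cut, and the transfer function is taken constant on the levels of $C_0$. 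Your telescoped $h$ survives this extension, but the extension is the necessary step you omit.

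The serious gap is in the mixed case. You claim that for torus rectangles ``whose side-lengths are the rational combinations of the $\beta_j$'' the visit counts deviate from $n\alpha_i$ by only $O(1)$, ``exactly as in the hyperplane bound.'' That bound in Theorem~\ref{finite_case1} is special: the sets $B_j$ there are the wrap-around regions of the $j$-th coordinate, so the visit count is literally the number of times that coordinate crosses $1$, which is within a constant of $n\alpha_j$. For a general interval this fails: by the Hecke--Ostrowski circle of results (\cite{Hec22,Pet73}), $1_{[0,\gamma)}-\gamma$ has bounded sums under rotation by $\beta$ essentially only when $\gamma\in\Z\beta+\Z$. Already $\gamma=\beta/2$, a perfectly good rational combination of the $\beta_j$, fails; and adjoining the $\Z/q$ fiber does not repair this, since on $\T\times\Z/2$ with rotation $(\beta,1)$ the eigenvalue group is $\{e^{2\pi i(n\beta+j/2)}\}$, which does not contain $e^{\pi i\beta}$, so by the eigenvalue obstruction recorded in Section~\ref{various} no set of measure $\beta/2$ in that system has $1_A-\beta/2$ as a coboundary. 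Hence the term-by-term boundedness you invoke is false, $\|S_n^\tau f\|_\infty$ need not be bounded for your model, and \cite{LS} cannot be applied. This is precisely why the paper does not use a direct product with generic rectangles: in Case~(3) it builds $\tau$ as a tower (skyscraper) extension of $R_\alpha$ over the sublevel set $D=\{x:0\le\sum_i(p_i/q_i)x_i<\beta_m\}$, whose shape encodes the rational dependency $\beta_m=\sum_i(p_i/q_i)\beta_i$, and the key input --- that $1_D-\beta_m$ is a coboundary for $R_\alpha$ --- is a Hecke/Petersen-type tightness statement, not a discrepancy triviality. Your construction needs either that arithmetic input or an explicit arrangement of the $A_i$ satisfying the corresponding Hecke condition; as written, the crucial boundedness step does not hold.
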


\noindent
{\bf Proof:}
Suppose $f(x) = \sum\limits_{i=1}^{m+1} a_i I_{A_i}(x)$ where $m$ is a positive integer,
$a_i\in \real$ for $1\leq i\leq m+1$, $A_i$ are disjoint,
$p(\bigcup_{i=1}^{m+1} A_i) = 1$, and $\sum\limits_{i=1}^{m+1}a_ip(A_i) = 0$.

\noindent
{\bf Case (1):} If $p(A_i)$ is rational
for $1\leq i\leq m+1$, then $f$ is a coboundary for an ergodic map with discrete spectrum.
Let $q$ be a positive integer such that
$q p(A_i) $ is a positive integer for $1\leq i\leq m+1$. Partition each
$A_i$ into sets of width $1/q$ for $1\leq i\leq m+1$. Stack the sets
into a single column $C_0$ of height $q$.
Then cut \& stack this column repeatedly without adding spacers
to produce a rank-one transformation with discrete spectrum.
By properly assigning constant values for the transfer function
$g$ on the levels of $C_0$, we get that $f = g - g\circ \tau$.

\noindent
{\bf Case (2):} If $p(A_i)$ are rationally independent for $1\leq i \leq m$, then
Theorem \ref{finite_case1} proves that $f$ is a coboundary for an ergodic product of rotations.

\noindent
{\bf Case (3):} Suppose the set $\{ p(A_i) : 1\leq i\leq m+1 \}$ contains
a non-trivial rationally independent subset,
but $p(A_i)$ are rationally dependent for $1\leq i \leq m$.
Let $\beta_i = p(A_i)$ for $1\leq i\leq m+1$.
Suppose there exist rationals ${p_i}/{q_i}$ for $1\leq i\leq m-1$ such that
\[
\beta_m = \sum\limits_{i=1}^{m-1} \frac{p_i}{q_i} \beta_i
\]
and $\beta_i$ for $1\leq i\leq m-1$ are rationally independent.
For $1\leq i\leq m+1$, let $\alpha_i = {\beta _i}/{(1-\beta_m)}$.
Let $X_0 = (\bigcup_{i=1}^{m-1} A_i ) \cup A_{m+1}$ and define
the normalized measure $p_0 = {p} / {(1 - \beta_m)}$ on $X_0$.
By Theorem \ref{finite_case1}, the function
\begin{eqnarray}
f_{\alpha} (x) = [\sum\limits_{i=1}^{m-1} (a_i  + a_m p(A_m)) I_{A_i}(x)] + ( a_{m+1} + a_mp(A_m) ) I_{A_{m+1}}(x)
\end{eqnarray}
is a coboundary for an ergodic map on $X_0$ that is isomorphic to the rotation $R_{\alpha}:[0,1]^{m-1} \to [0,1]^{m-1}$
defined by $R_{\alpha}(x_1,\ldots ,x_{m-1}) = (x_1+\alpha_1,\ldots ,x_{m-1} + \alpha_{m-1})$ modulo one.
We will define an ergodic transformation $\tau$ as an extension of $R_{\alpha}$. Let
\[
D = \{ (x_i)\in [0,1]^{m-1} : 0 \leq \sum\limits_{i=1}^{m-1} \frac{p_i}{q_i}x_i < \beta_m \} .
\]
Define $X=\{ (x,0) : x\in [0,1]^{m-1} \} \cup \{ (x,1) : x\in D \}$. Define $\tau:X\to X$ as
\begin{eqnarray*}
\tau(x,i) =
\left\{\begin{array}{ll}
(x,1) & \mbox{if } x\in D\ \ \mbox{and}\ \ i = 0, \\
(R_{\alpha}(x), 0) & \mbox{otherwise} .
\end{array}
\right.
\end{eqnarray*}
The fact that $f$ is a coboundary for a transformation isomorphic to $\tau$ follows from the fact
that $f_{\beta}(x) = I_{D}(x) - \beta_m$ is a coboundary for $R_{\alpha}$.
One way to establish that $f_{\beta}$ is a coboundary, is to apply a generalization
of the argument found in \cite{Hec22, Pet73}. This establishes that $f_{\beta}$ is tight, and hence
a coboundary. Therefore, $f$ is a coboundary for an ergodic map
isomorphic to $\tau$.  The general case of (3) with rational dependencies can be handled
in a similar manner.
$\Box$

\noindent
{\bf Remark:} Theorem \ref{finite_case2} may be extended to bounded countable step functions.  For unbounded step functions $f=\sum_{i=0}^{\infty} a_i I_{A_i}$, the same construction can be extended in a straightforward manner to produce an ergodic measure preserving transformation $T$.  If each set $\{\mu(A_1),\mu(A_2),\ldots ,\mu(A_m)\}$ is rationally independent for each $m\in \natural$, then the method in theorem \ref{finite_case1} generates an ergodic $T$.  However, it is possible that $f$ will not be a coboundary for $T$ with measurable transfer function.  In particular, for any $r<\infty$, there exists $f\in L_r$ such that the ergodic measure preserving transformation $T$ constructed in theorem \ref{finite_case1} does not have a measurable solution $g$ to the equation $f = g - g\circ T$.

\section{Weak Mixing Coboundaries}
\label{wmcobcutstack}

\begin{thm}
\label{wmcob}
 Suppose $f\in L_{\infty}(X)$ is mean-zero, and takes on essentially infinitely many values.  There exist a weak mixing system $(X, \B , p , \tau)$ and an $L_\infty(X)$ function $g$ such that $f(x) = g(x) - g(\tau x)$ for almost every $x\in X$.
\end{thm}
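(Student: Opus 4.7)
The plan is to construct $\tau$ by an inductive cutting-and-stacking procedure, producing a (roughly rank-one) weak mixing transformation while simultaneously assembling the transfer function $g$ as a running-sum function along the orbits. First I would reduce to the model $X=[0,1)$ with Lebesgue measure and fix a sequence of finite-step mean-zero approximations $f_n$ to $f$ with $\|f-f_n\|_\infty \le 2^{-n}$. Because $f$ takes essentially infinitely many values, the level sets of $f_n$ can be refined freely at each stage, and we may split any level set into subpieces of prescribed small measures that can then be paired off between positive and negative values.

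At stage $n$, the construction maintains a Rokhlin-tower $T_n$ of height $h_n$ whose levels refine the partition of $X$ into level sets of $f_n$, so that $f_n$ is a constant $v_j^{(n)}$ on the $j$-th level. The critical combinatorial requirement is that the levels be ordered so that the cumulative sums
\[
s_j^{(n)} \;=\; \sum_{k=0}^{j-1} v_k^{(n)}, \qquad 0 \le j \le h_n,
\]
remain uniformly bounded (say by $\|f\|_\infty+1$) and that the full sum $s_{h_n}^{(n)}$ is zero. This is achievable because $f_n$ is mean-zero and, after arbitrary subdivision, its level-set measures can be arranged as a balanced $\pm$ sequence whose partial sums never wander far from $0$. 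Define $g_n$ to be the constant $s_j^{(n)}$ on the $j$-th level; then $f_n = g_n - g_n\circ\tau_n$ inside $T_n$ and $\|g_n\|_\infty \le \|f\|_\infty+1$.

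To pass from $T_n$ to $T_{n+1}$, I would cut each column into many subcolumns so that the new levels refine the partition into level sets of $f_{n+1}$, and then stack the subcolumns using a Chacon-type non-uniform spacer pattern that is known to force weak mixing in the limit $\tau=\lim \tau_n$. The reassembled subcolumns and spacers must be placed in an order that preserves the bounded-partial-sum property now for $f_{n+1}$; the freedom to put each spacer inside a prescribed level set of $f_{n+1}$ whose value keeps the running sum inside the control band is what makes the two constructions coexist. By matching refinements so that $g_{n+1}$ agrees with $g_n$ on a set of measure $\ge 1-2^{-n}$ and the discrepancy elsewhere is bounded by $O(\|f-f_n\|_\infty)$, the sequence $g_n$ converges almost everywhere (and in $L_\infty$) to a bounded $g$ satisfying $f = g - g\circ\tau$ a.e.

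The main obstacle is the simultaneous fulfillment of two competing requirements at every stage: Chacon-style spacers are needed to kill eigenvalues and achieve weak mixing, but inserting spacers disturbs the balanced $\pm$ ordering that keeps the partial sums bounded. The technical heart of the argument is a combinatorial scheme that absorbs each spacer into a preselected level set of $f$ whose value is exactly what is needed to continue the bounded-partial-sum trajectory. The hypothesis that $f$ takes essentially infinitely many values is precisely what provides the infinite palette of values from which to draw such spacer labels; a finite-step $f$ does not supply enough values and (consistent with the eigenvalue obstructions recalled in Section \ref{various}) generally forces $\tau$ to have point spectrum, as the authors already noted for 2-step functions.
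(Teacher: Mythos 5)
Your outline matches the paper's strategy in broad strokes (cut-and-stack with tower levels refining the level sets of the function, a greedy ordering of levels to keep partial sums bounded, the transfer function as running sums along the tower, and a Chacon-type staircase to force weak mixing), but there is a genuine gap at the step where you pass from the step approximations $f_n$ to $f$ itself. You define $g_n$ as the partial sums $s_j^{(n)}$ of the \emph{constants} of $f_n$, so that $f_n=g_n-g_n\circ\tau_n$ on the tower, and you claim the discrepancy between $g_{n+1}$ and $g_n$ is $O(\|f-f_n\|_\infty)$. It is not: the true transfer function must be a running sum of $f$ itself, and over a tower of height $h_n$ the accumulated error between partial sums of $f$ and of $f_n$ is only bounded by $h_n\|f-f_n\|_\infty$. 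The heights $h_n$ are forced to be enormous by the requirement that all levels have equal measure (the paper needs simultaneous Diophantine approximation with denominator $q$ to make the level-set measures commensurable, and $h_n$ is of order $q$), and they depend on the partition into level sets of $f_n$, so you cannot break the circularity by choosing $\|f-f_n\|_\infty$ small relative to $h_n$ in advance. As written, your $g_n$ need not converge to anything bounded, and the limit identity $f=g-g\circ\tau$ does not follow.

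The missing ingredient is what the paper calls \emph{level refinement}: after the greedy stacking one must further rearrange the points \emph{within} each level (by repeatedly swapping measurable pieces $\tau^i(D_1)$ and $\tau^i(D_2)$ between levels and running an infimum argument) so that the partial sums of $f$ itself, not of a step approximation, satisfy $|\sum_{i=0}^{k}f(\tau^i x)|<\|f\|_\infty+\epsilon$ for all $k<h$ and $|\sum_{i=0}^{h-1}f(\tau^i x)|<\epsilon$ pointwise on the base. Only with these pointwise bounds do the running sums $g_k(x)=-\sum_{i=1}^{j-1}f(\tau^{-i}x)$ have summable increments $|g_{k+1}-g_k|<3(\epsilon_k+\epsilon_{k-1})$ and converge a.e.\ to a bounded $g$. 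Relatedly, your ``absorb each spacer into a preselected level set'' heuristic is the right instinct but needs the concrete mechanism the paper uses: the staircase is built from the function's own level sets (no external spacers), and the hypothesis of infinitely many essential values is used to extract two subsets $D_1,D_2$ of a single level set on which $f$ differs by a definite gap $d>0$; swapping between these is what tamps down the partial-sum discrepancies the staircase creates. Finally, the exact balance $s_{h_n}^{(n)}=0$ and the equal-measure levels require the careful choice of an exceptional set (the paper's PUB conditions), which your sketch asserts but does not supply.
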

The following four lemmas are the main tools for iteratively constructing our transformation and proving the previous theorem.

\subsection{Balanced Partitions}
Let $A$ be a measurable subset of $X$ and $f:A\to \real$ in $L_1(A,p_A)$.
Let $\epsilon > 0$. We say a finite partition $\Pi$ of $A$ is $\epsilon$-balanced and uniform,
if there exists $E\in \Pi$ such that:
\begin{enumerate}
\item $p (E) < \epsilon p(A)$,
\item $\int_{A\setminus E} f dp = \frac{p(A\setminus E)}{p(A)} \int_A f dp$,
\item $| f(x) - f(y) | < \epsilon$ for $x,y\in a$ and $a\in \Pi\setminus \{ E\}$,
\item $p (c) = p (d)$ for $c,d \in \Pi\setminus \{ E\}$.
\end{enumerate}
We refer to this type of partition as a PUB($\epsilon$) partition for $f_{|A}$.
The set $E$ is referred to as the exceptional set of the PUB.
\begin{lem}
Suppose $A\subset X$ is measurable and $f:A\to \real$ is integrable and takes
on essentially infinitely many values. Given $\epsilon >0$, 
there exists a PUB($\epsilon$) partition such that $f$ takes on essentially
infinitely many values on both its exceptional set $E$ and its complement
$A\setminus E$.
\end{lem}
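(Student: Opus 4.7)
The plan is to extract two small disjoint ``reservoirs'' $R, R'\subset A$ on which $f$ is already essentially infinite-valued, put $R$ into the eventual exceptional set $E$ and leave $R'$ in its complement, then to level-set partition the rest of $A$ into narrow-oscillation blocks and slice each block into many equal-measure atoms of a common size $\delta$ plus a small leftover, absorb all leftovers into $E$, and finally tune a continuous parameter so that the mean-preservation identity in the PUB definition holds \emph{exactly} while every remaining cell still has the common measure $\delta$.

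For the reservoirs, I would enumerate distinct essential values $v_1, v_2, \ldots$ of $f$, pick pairwise disjoint open neighborhoods $W_i\subset\real$, and for each $i$ choose disjoint measurable sets $R_i, R_i'\subset f^{-1}(W_i)\cap A$ of positive measure with $\sum_i(p(R_i)+p(R_i'))<\epsilon p(A)/10$ and $p(R_i)+p(R_i')<p(f^{-1}(W_i)\cap A)$. Set $R=\bigcup_i R_i$ and $R'=\bigcup_i R_i'$. Since each $v_i$ is essential on both $R_i$ and $R_i'$, $f$ is essentially infinite-valued on both $R$ and $R'$, so as long as the final partition satisfies $R\subset E$ and $R'\cap E=\emptyset$ the ``infinitely many values'' conclusion on both $E$ and $A\setminus E$ is automatic.

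Next, set $A_0=A\setminus(R\cup R')$, partition $\real$ into half-open intervals $I_k$ of length $\epsilon/2$, and let $B_k=f^{-1}(I_k)\cap A_0$, so each $B_k$ has $f$-oscillation at most $\epsilon/2$. Let $m=p(A)^{-1}\int_A f\,dp$. Because $f$ is non-constant, there are level indices $k_\pm$ with $I_{k_+}$ above $m$ and $I_{k_-}$ below $m$ and $p(B_{k_\pm})>0$. For a small $\delta>0$ I would slice each nonempty $B_k$ into $n_k=\lfloor p(B_k)/\delta\rfloor$ equal $\delta$-pieces plus a leftover $L_k$ of measure less than $\delta$, and set $L=\bigcup_k L_k$, $E_0=R\cup L$.

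The main obstacle is making the mean-preservation identity $\int_{A\setminus E}f\,dp=\frac{p(A\setminus E)}{p(A)}\int_A f\,dp$ hold exactly while every non-$E$ cell has common measure $\delta$. Setting $T=m\,p(E_0)-\int_{E_0}f\,dp$, I would migrate a subfamily $S$ of uniform $\delta$-pieces into $E$ so that $\sigma(S):=\sum_{u\in S}(\int_u f\,dp - m\delta)$ equals $T$. Atoms in $B_{k_+}$ contribute approximately $+\delta(v_{k_+}-m)>0$ and atoms in $B_{k_-}$ approximately $-\delta(v_{k_-}-m)<0$, so subset sums of these sweep out a wide interval around zero with granularity at most $\delta\cdot\max_k|v_k-m|$, and an $S$ can be chosen with $|\sigma(S)-T|=O(\delta)$. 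To close the residual gap \emph{exactly} one needs a genuine continuous degree of freedom: I would parametrize $R_1=R_1(t)$ by $p(R_1(t))=t$ (possible since the space is standard Lebesgue) and observe that on every $t$-interval where the counts $n_k$ are locally constant, both $T(t)$ and the corresponding $\sigma(S(t))$ vary continuously, while at the finitely many breakpoints where some $n_k$ changes each jumps by $O(\delta)$; for $\delta$ sufficiently small, an intermediate-value argument on such a continuity interval selects $t$ and $S$ with $\sigma(S)=T(t)$ exactly. With $E=R(t)\cup L\cup S$, conditions (1)--(4) of the PUB definition are then immediate---each remaining cell lies in some $B_k$, giving oscillation $\leq\epsilon/2<\epsilon$ and measure $\delta$, and $p(E)\leq p(R)+N\delta+|S|\delta<\epsilon p(A)$ for $\delta$ small and $N$ the number of nonempty $B_k$---and since $R\subset E$ and $R'\subset A\setminus E$ the essentially-infinite-values requirement holds on both sides.
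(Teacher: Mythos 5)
Your overall architecture (value-level sets for the oscillation condition, equal-measure slicing with leftovers swept into $E$, reservoirs to secure infinitely many essential values, and a one-parameter continuity argument to hit the mean identity exactly) is close in spirit to the paper's, but two steps do not work as written. First, the reservoir $R'$ is never made part of the partition. You partition only $A_0=A\setminus(R\cup R')$ into $\delta$-cells and set $E=R(t)\cup L\cup S$, so $R'$ is covered by no cell of $\Pi$; and it cannot be: $R'=\bigcup_i R_i'$ is an infinite union with $\sum_i p(R_i')<\infty$, so all but finitely many $R_i'$ have measure less than $\delta$ and cannot be cut into cells of the common measure $\delta$ required by condition (4). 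Absorbing them into $E$ destroys exactly the property $R'$ was built to guarantee, and keeping them as extra cells violates (4). The paper avoids this by pigeonholing: with finitely many level sets, some $A_{i_0}$ carries infinitely many essential values, and one splits $A_{i_0}$ so that \emph{both} the part going into $E$ and the part staying out retain infinitely many values; the retained part is then sliced into cells of the common measure along with everything else.

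Second, the intermediate-value step does not actually produce a parameter that moves. On a breakpoint-free $t$-interval, increasing $p(R_1(t))$ by $ds$ removes mass $ds$ from the level set containing $f^{-1}(W_1)$, so (with $n_k$ locally constant) the leftover $L_{k_1}$ shrinks by the same $ds$; hence $p(E_0(t))$ is locally constant and $\int_{E_0(t)}f\,dp$ changes only by the within-level oscillation, giving $|T'(t)|=O(\epsilon)$ over an interval of length at most $\delta$. So $T$ sweeps an interval of length $O(\epsilon\delta)$, whereas the gaps between achievable subset sums $\sigma(S)$ are of order $\delta\,|v_{k_+}-v_{k_-}|$, which in general is not smaller; if moreover $v_1=m$ the sweep is essentially zero. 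The paper's tuning is structurally different and sidesteps this: all cell measures are fixed once and for all (via simultaneous Diophantine approximation, which yields a common cell measure $1/n$ and exceptional pieces $B_i\subset A_i$ of prescribed measure), and the continuous degree of freedom is the \emph{composition} of the single piece $B_{i_0}$, interpolated between a sub-reservoir $E_0$ with average below that of $A_{i_0}$ and a sub-reservoir $E_1$ with average above it. That varies $\int_E f\,dp$ continuously at fixed $p(E)$ through a range that provably contains the target, with no discrete jumps to fight. To repair your proof you would need to replace the $t=p(R_1(t))$ parameter by a composition-type parameter of this kind (which in turn requires a reservoir on which $f$ genuinely varies, i.e., the paper's choice of $E_0,E_1$ inside a single infinite-valued level set).
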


\noindent
{\bf Proof:}
Without loss of generality, it is sufficient to prove the lemma where $0 < || f ||_{\infty} < 1$
and $\epsilon < 1$.  Let $N \in \natural$.
Choose $m\in \natural$ such that
\begin{equation}
\frac{2}{m} < \epsilon .
\end{equation}
For $i = 0,1,2,\ldots , 2m - 1$, let
\begin{equation}
A_i = \{ x\in A: -1 + \frac{i}{m} \leq f(x) < -1 + \frac{i+1}{m} \} .
\end{equation}

Let $\alpha = \min{ \{ p (A_i) : p (A_i) > 0 \} }$.
There exists $i_0$ such that $f$ takes on infinitely many values
on $A_{i_0}$. Let $E_0$ and $E_1$ be disjoint subsets of $A_{i_0}$
with equal measure and such that
\begin{eqnarray}
\frac{1}{p (E_0)} \int_{E_0} f dp < \frac{1}{p (A_{i_0})} \int_{A_{i_0}} f dp ,\\
\frac{1}{p (E_1)} \int_{E_1} f dp > \frac{1}{p (A_{i_0})} \int_{A_{i_0}} f dp ,
\end{eqnarray}
and $f$ takes on infinitely many values on the set
$A_{i_0} \setminus (E_0 \cup E_1)$ and on the set
$E_0 \cup E_1$.
Let
$$d = \min{\{ | \frac{1}{p (E_i)} \int_{E_i} f dp - \frac{1}{p (A_{i_0})} \int_{A_{i_0}} f dp | : i = 0,1 \} } .$$

By simultaneous Diophantine approximation \cite{Cas57},
there exist $q \in \natural$ and $p_i \in \natural$ such that
\begin{eqnarray}
q & > & \max{\{ \frac{2N}{(1 - \epsilon)p (A)}, \frac{2p (A)}{dp (E_1)} \}},
\end{eqnarray}
and for $i=0, 1, \ldots , 2m - 1$,
\begin{eqnarray}
| q p (A_i) - p_i | & < & q^{\frac{-1}{2m}}, \\
2m q^{{-1}/{2m}} & < & \epsilon ,\\
2mq^{{-1}/{2m}} & < & d ( \frac{2\alpha}{3} - q^{\frac{-1}{2m}} ) .
\end{eqnarray}

Let $n = q + 1$. Thus,
\begin{eqnarray}
|p (A_i) - (\frac{p_i}{n} + \frac{p (A_i)}{n})| < n^{-1}q^{{-1}/{2m}}.
\end{eqnarray}
Let $h = \sum\limits_{i=0}^{2m-1} p_i$.
For $i=0, 1, \ldots , 2m - 1$, we can choose subsets
$B_i \subset A_i$ such that
\begin{eqnarray}
p (B_i) & = & p (A_i) - \frac{p_i}{n} ,\\
\frac{1}{p (B_i)} \int_{B_i} f dp & = & \frac{1}{p (A_i)} \int_{A_i} f dp .
\end{eqnarray}
Thus,
\begin{eqnarray}
| \sum\limits_{i=0}^{2m-1} \int_{B_i} f dp | & = & | \sum\limits_{i=0}^{2m-1} \frac{p (B_i)}{p (A_i)} \int_{A_i} f dp |
= | \sum\limits_{i=0}^{2m-1} (\frac{p (B_i)}{p (A_i)} - \frac{1}{n}) \int_{A_i} f dp | \\
& \leq & \sum\limits_{i=0}^{2m-1} | p (B_i) - \frac{p (A_i)}{n} |
= \sum\limits_{i=0}^{2m-1} | p (A_i) - \frac{p_i + p (A_i)}{n} | \\
& < & 2m n^{-1} q^{{-1} / {2m}} < \frac{d}{n}
(\frac{2\alpha}{3} - q^{\frac{-1}{2m}} ) .
\end{eqnarray}
This implies we can choose $B_{i_0}$ such that
\begin{eqnarray}
\sum\limits_{i=0}^{2m-1} \int_{B_i} f dp = 0 .
\end{eqnarray}
Let $E = \bigcup_{i=0}^{2m-1} B_i$ and partition each
set $A_i\setminus B_i$ into $p_i$ subsets of measure ${1}/{n}$
to form $\Pi$.
Therefore, $p(E) < \epsilon$ and our lemma is proven. $\Box$

\subsection{Balanced Uniform Towers}
Let $A$ be a measurable subset of $X$ and $f:A\to \real$ a bounded, mean-zero function.
Given $h\in \natural$ and $\epsilon > 0$, an $\epsilon$-balanced and uniform tower for $f$ is a set
of disjoint intervals $I_i\subset A$ for $i=1,2,\ldots ,h$ and an invertible measure preserving
map $\tau: I_i \to I_{i+1}$ for $i=1,2,\ldots ,h-1$, such that:
\begin{eqnarray}
p (\bigcup_{i=1}^h I_i) & > & (1-\epsilon )p (A); \\
|\sum\limits_{i=0}^{k} f(\tau^i x) | & < & || f ||_{\infty} + \epsilon \ \ \mbox{ for } x\in I_1, k < h ,\label{nbig1}\\
\sum\limits_{i=1}^{h} \int_{I_i}f dp & = & \int_{A} f dp ,\ \mbox{and} \\
| \sum\limits_{i=0}^{h-1} f(\tau^i x) | & < & \epsilon \ \ \mbox{ for } x\in I_1 \label{nbig2}.
\end{eqnarray}
We refer to this type of tower as a TUB($\epsilon, h$) tower for $f_{|A}$.
If $f_{|A}$ has a PUB($\epsilon$), then $f_{|A}$ has a TUB($\epsilon, h$),
if there exist disjoint intervals $I_i\subset A$ for $i=1,2,\ldots ,h$
and an invertible measure preserving map
$\tau: I_i \to I_{i+1}$ for $i=1,2,\ldots ,h-1$, such that
for $i=1,2,\ldots ,h$, $I_i\in \Pi\setminus \{E\}$
and (\ref{nbig1}), (\ref{nbig2}) hold.

\begin{lem}
\label{ucl}
Let $( X, \B , p )$ be a Lebesgue probability space and $A$ a measurable subset of $X$. Suppose $f: A \to \real$ is bounded, mean-zero, and takes on essentially infinitely many values on $A$. Given $N \in \natural$ and $\epsilon > 0$, there exist $h > N$ and a TUB($\epsilon, h$) tower for $f$ such that $f$ takes on infinitely many values on both
$(\bigcup_{i=1}^{h}I_i)$ and $(A\setminus \bigcup_{i=1}^{h}I_i)$.
\end{lem}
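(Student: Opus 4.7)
My plan is to apply the preceding PUB lemma to $f|_A$ at a sufficiently small precision parameter $\epsilon'$, order its non-exceptional cells into the levels of a tower using a one-dimensional Steinitz-type rearrangement of their $f$-averages, and glue consecutive levels by arbitrary measure-preserving bijections.

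I would first apply the PUB lemma to obtain a partition $\Pi = \{E, C_1,\ldots,C_M\}$ of $A$ with $p(E) < \epsilon' p(A)$, with oscillation of $f$ less than $\epsilon'$ on each $C_j$, with equal cell measures, with $M > N$, and with $f$ taking essentially infinitely many values on each of $E$ and $A\setminus E$. Because $f$ is mean-zero on $A$, condition~(2) of PUB gives $\int_{A\setminus E} f\, dp = 0$ and hence $\int_E f\, dp = 0$. Setting $v_j := \frac{1}{p(C_j)}\int_{C_j} f\, dp$ yields cell averages with $|v_j| \le \|f\|_\infty$ and, since the cells are equimeasurable, $\sum_{j=1}^M v_j = 0$.

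Next I would carry out a one-dimensional greedy rearrangement: order the indices as $\sigma(1),\ldots,\sigma(M)$ by choosing at each step any remaining index whose $v$-value has sign opposite to the current partial sum $S_k = \sum_{j\le k} v_{\sigma(j)}$. Such a choice always exists (if $S_k \ge 0$ and every remaining $v$ were positive, the remaining sum $-S_k$ would be $\le 0$, a contradiction), and a straightforward induction yields $|S_k| \le \max_j |v_j| \le \|f\|_\infty$ for every $k$, together with $S_M = 0$. Set $h := M$, $I_j := C_{\sigma(j)}$, and let $\tau: I_j \to I_{j+1}$ be any measure-preserving bijection (possible because all $C_j$ have equal measure). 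The four TUB conditions then follow: (1) from $\bigcup_j I_j = A\setminus E$ together with $p(E) < \epsilon' p(A) < \epsilon p(A)$; (3) from $\sum_i \int_{I_i} f\, dp = \int_{A\setminus E} f\, dp = 0 = \int_A f\, dp$; and (\ref{nbig1}), (\ref{nbig2}) by combining the greedy bound on $\sum_{j \le k} v_{\sigma(j)}$ with the oscillation estimate $|\sum_{i=0}^k f(\tau^i x) - \sum_{j=1}^{k+1} v_{\sigma(j)}| \le (k+1)\epsilon' \le M\epsilon'$. The infinitely-many-values property is inherited directly from PUB, since $\bigcup_j I_j = A\setminus E$ and $E \subseteq A\setminus \bigcup_j I_j$.

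The main technical obstacle is the simultaneous choice of $\epsilon'$ and $M$ so that $M > N$, $M\epsilon' < \epsilon/2$, and $\epsilon' < \epsilon/2$. Because the PUB proof forces its diophantine denominator $q \approx M$ to be large relative to $1/\epsilon'$, this parameter juggle is delicate: either one exploits the fact that infinitely many $q$ satisfy the diophantine approximation requirement, selecting one in the window $(N, \epsilon/(2\epsilon'))$, or one sharpens the oscillation bookkeeping --- for example, by splitting each $C_j$ into the sign subsets $\{f > v_j\}$ and $\{f \le v_j\}$ and choosing $\tau$ to alternate these across consecutive levels, so that the per-step errors telescope rather than accumulate linearly in $k$.
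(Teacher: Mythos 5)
Your greedy rearrangement of the cell averages is exactly the paper's ``greedy stacking'' step, and it correctly delivers the integral-level conditions: $\sum_{i=1}^h\int_{I_i}f\,dp=0$ and the bound $|S_k|\le\|f\|_\infty$ on partial sums of averages. The gap is in how you pass from these average-level bounds to the pointwise conditions (\ref{nbig1}) and (\ref{nbig2}). Gluing consecutive levels by \emph{arbitrary} measure-preserving bijections does not work: an adversarial bijection can pair the upper $\epsilon'$-portion of each cell with the upper portion of the next, so that some positive-measure set of $x\in I_1$ accumulates error of order $k\epsilon'$ along its orbit, and this worst case is exactly what your bound $(k+1)\epsilon'\le M\epsilon'$ reflects. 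You recognize that you need $M\epsilon'<\epsilon$, but your first proposed fix --- selecting the Diophantine denominator $q\approx M$ in the window $(N,\epsilon/(2\epsilon'))$ --- is infeasible: the PUB construction requires $2mq^{-1/2m}<\epsilon'$, i.e.\ $q>(2m/\epsilon')^{2m}$, which is vastly larger than $\epsilon/(2\epsilon')$, so the tower height is \emph{forced} to be large relative to $1/\epsilon'$ and the linear error accumulation cannot be tamed by parameter juggling.

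The missing ingredient is the paper's ``level refinement'' step, which is a genuine additional argument rather than bookkeeping: one considers the family $\Phi$ of all measure-preserving maps sending $I_i$ onto $I_{i+1}$, defines a discrepancy $d(\tau)$ measuring the extent to which orbit sums $\sum_{i=0}^{h-1}f(\tau^ix)$ differ by more than $\epsilon$ across subsets of the base, and shows by an exchange argument (swapping $\tau^i(D_1)$ with $\tau^i(D_2)$ at a level where $f(\tau^i x)\le f(\tau^i\psi x)-\epsilon/h$, and iterating) that $\inf_{\tau\in\Phi}d(\tau)=0$; since all orbit sums have the same average, equalizing them pointwise yields (\ref{nbig2}), and (\ref{nbig1}) follows similarly. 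Your second suggestion --- splitting each cell into $\{f>v_j\}$ and $\{f\le v_j\}$ and alternating across levels --- gestures in this direction, but a single alternation does not make the errors telescope (the signed deviations within a cell need not match in distribution from level to level), and you would still have to prove that some choice of bijections equalizes the $h$-step sums to within $\epsilon$. As written, the proposal establishes a tower satisfying conditions (1) and (3) of the TUB definition but not the pointwise conditions, so the proof is incomplete.
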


\noindent
{\bf Proof:}
From the construction of PUB(${\epsilon} / {2}$) in the previous lemma,
partition $A_i \setminus B_i$ into a disjoint union of sets $A_i(j)$
for $j=1,2,\ldots ,p_i$, such that
\begin{eqnarray}
p (A_i(j)) & = & \frac{1}{n} .
\end{eqnarray}

\subsubsection{Greedy Stacking}
Now we give an inductive procedure for stacking the sets $A_i(j)$.
Choose arbitrary $A_i(j)$ and label the set $I_1$.
Given $I_1, I_2, \ldots , I_{k-1}$, let
\begin{equation}
\sigma_{k-1} = \sum\limits_{i=1}^{k-1} \int_{I_i} f dp .
\end{equation}
If $k = h$, then we are done.
If $\sigma_{k-1} \leq 0$,
choose
$$
I_k=A_i(j) \not\subset \bigcup_{i=1}^{k-1} \{ I_i \}
$$
such that
$\int_{I_k} f dp \geq 0$.
This is possible,
since $k < h $ and $\sigma_h = \sum\limits_{i=0}^{2m-1}\int_{A_i\setminus B_i} f dp = 0$.
Otherwise, if $\sigma_k > 0$,
then by the construction of $A_i(j)$, there
exists $I_k \not\subset \bigcup_{i=1}^{k-1} \{ I_i \}$ such that
$\int_{I_k} f dp < 0$.
This procedure produces a sequence of sets $I_i$ for $i=1, 2, \ldots , h$
with the property:
\begin{eqnarray}
\sum\limits_{i=1}^{h} \int_{I_i} f dp & = & \sum\limits_{i=0}^{2m-1} \int_{A_i\setminus B_i} f dp \\
& = & \sum\limits_{i=0}^{2m-1} \int_{B_i} f dp = 0 .
\end{eqnarray}

\subsubsection{Level Refinement}
Our transformation $\tau$ will map $I_i$ onto $I_{i+1}$ for $i = 1, 2, \ldots , h-1$.
Let $\Phi$ be the set of measure preserving maps $\tau$ such that
$I_{i+1} = \tau(I_i)$ for $i=1,2,\ldots ,h-1$.
Given $\tau\in \Phi$, disjoint subsets $D_1, D_2$ contained in $I_1$ with equal measure,
and an invertible measure preserving mapping $\psi : D_1 \to D_2$, let
\[
d(\tau, D_1,D_2, \psi) = \inf_{x\in D_1} ( \sum\limits_{i=0}^{h-1} f(\tau^ix) - \sum\limits_{i=0}^{h-1} f(\tau^i(\psi (x))) ) .
\]
Define
\[
d(D_1,D_2) = \sup_{\psi} d(\tau, D_1,D_2,\psi ) .
\]
and
\[
d(\tau) = \sup_{D_1} \{ p (D_1) | \exists D_2 \mbox{ such that } d(D_1,D_2) > \epsilon \} .
\]
Finally, let
\[
d = \inf_{\tau\in \Phi} d(\tau) .
\]
We claim that $d=0$. If $d > 0$, then there exists $\tau\in \Phi$ such that
$| d(\tau) - d | < {d} / {h}$. This produces $D_1, D_2$ and $\psi$
such that $d(\tau,D_1,D_2,\psi ) \geq \epsilon$ and $| p (D_1) - d | < {d} / {h}$.
Then there exists $0\leq i < h$ such that for $x\in D_1$,
\[
f(\tau^ix) \leq f(\tau^i(\psi x)) - \frac{\epsilon}{h} .
\]
Modify the map $\tau$, by switching $\tau^i(D_1)$ and $\tau^i(D_2)$. Thus, there exists
$\tau_1 \in \Phi$ such that $\tau_1 (\tau^{i-1}D_1) = \tau^i(D_2)$ and
$\tau_1 (\tau^{i-1}D_2) = \tau^i(D_1)$.
If $d(\tau_1, D_1,D_2,\psi) \geq \epsilon$, modify $\tau_1$ in a similar manner
to produce $\tau_2$. After a finite number of steps, we may produce
$\tau_k$ such that $d(\tau_k,D_1,D_2,\psi ) < \epsilon$.
By passing to a subset of $D_1$ if necessary, we obtain
$\tau^{\prime} \in \Phi$ such that $d(\tau^{\prime}) < d$ which proves that $d=0$ by contradiction.
Therefore, this proves (\ref{nbig2}) of our lemma. Claim (\ref{nbig1})
follows in a similar manner. $\Box$

\subsection{Weakly Balanced Uniform Towers}
Let $A$ be a measurable subset of $X$ and $f:A\to \real$ a bounded, mean-zero function.
Given $\epsilon > 0$ and $M\in \natural$, an $\epsilon$-weakly balanced and uniform tower for $f$ is a set
of disjoint intervals $I_{i,j}\subset A$ for $i=1,2,\ldots ,h_j$, $j=1,2,\ldots ,w$,
and an invertible measure preserving map $\tau: I_{i,j} \to I_{i+1,j}$ for $i=1,2,\ldots ,h_j-1$,
such that:
\begin{eqnarray}
p (\bigcup_{j=1}^{w} \bigcup_{i=1}^h I_{i,j}) & > & (1-\epsilon )p (A); \\
|\sum\limits_{i=0}^{k} f(\tau^i x) | & < & M || f ||_{\infty} \ \ \mbox{ for } x\in I_{1,j}, k < h_j ,\label{nbig3}\\
\sum\limits_{j=1}^{w} \sum\limits_{i=1}^{h_j} \int_{I_{i,j}}f dp & = & \int_{A} f dp ,\ \mbox{and} \\
| \sum\limits_{i=0}^{h_j-1} f(\tau^i x) | & < & \epsilon \ \ \mbox{ for } x\in I_{1,j} \label{nbig4}.
\end{eqnarray}
We refer to this type of tower as a W-TUB($\epsilon, M$) tower for $f_{|A}$.
If $p (I_{1,i}) = p (I_{1,j})$ for $1 \leq i,j \leq w$,
and $h_{i+1}=h_i + 1$ for $1\leq i < w$, then we say $f_{|A}$ has
a W-TUB($\epsilon, M, h_1, w$) tower.
A W-TUB($\epsilon, M, h_1, w$) tower may be derived from a PUB($\epsilon$)
in a similar manner to the way a TUB($\epsilon, h$) is obtained from a PUB($\epsilon$).

\begin{lem}[weak mixing style]
\label{wucl}
Take a measurable set $A \in \mathcal B$. Suppose $f: A \to \real$ is bounded, mean-zero, and takes on essentially infinitely many values on $A$. Given $N \in \natural$ and $\epsilon > 0$, there exist $h > N$ and a W-TUB($\epsilon, 3, h, 3$) tower for $f$ such that $f$ takes on infinitely many values on both
$(\bigcup_{j=1}^{3}\bigcup_{i=1}^{h_j}I_{i,j})$ and $(A\setminus \bigcup_{j=1}^{3} \bigcup_{i=1}^{h_j}I_{i,j})$.
\end{lem}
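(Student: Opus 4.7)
The plan is to adapt the proof of Lemma \ref{ucl} by distributing the pieces of a fine PUB partition among three columns of consecutive heights $h$, $h+1$, $h+2$ instead of stacking them into a single column. First I would invoke the PUB construction from the preceding two lemmas with $m$ chosen large enough that $\|f\|_\infty / m < \epsilon$, and with $q$ chosen so that $n = q+1$ is large and the total number of non-exceptional pieces equals $3(h+1)$ for some $h > N$. This gives a partition of $A$ (minus an exceptional set of measure less than $\epsilon\, p(A)$) into pieces of equal measure $1/n$, with $f$ nearly constant on each piece and the total piece-integral equal to zero.

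Next I would form the three columns sequentially using the greedy selection of Lemma \ref{ucl}. For column 1 of height $h$, I pick pieces $I_{1,1}, \ldots , I_{h,1}$ one at a time: at step $k$, select a remaining piece whose integral has sign opposite to the running sum $\sigma^{(1)}_{k-1}$, so that $|\sigma^{(1)}_k|$ stays bounded by the maximum piece-integral $\|f\|_\infty / n$. At the final step, the fine granularity of the PUB (piece values spaced by roughly $\|f\|_\infty /(mn)$) lets me choose a last piece making $|\sigma^{(1)}_h| < \epsilon / n$. I then repeat the greedy selection on the remaining $2h+3$ pieces to form column 2 of height $h+1$, ending with $|\sigma^{(2)}_{h+1}| < \epsilon / n$, and the remaining $h+2$ pieces form column 3, whose integral is automatically within $2\epsilon / n$ of zero since the global piece-integral is zero.

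After assigning pieces to columns, I would apply the level-refinement argument of Lemma \ref{ucl} inside each column separately to define a measure preserving $\tau\colon I_{i,j} \to I_{i+1,j}$. After refinement, the partial sum $\sum_{i=0}^{k} f(\tau^i x)$ for $x \in I_{1,j}$ is approximately $n\, \sigma^{(j)}_{k+1}$, which is bounded by $\|f\|_\infty + \epsilon$, comfortably below $3\|f\|_\infty$ as required for $M = 3$. The top-of-column sum is approximately $n\, \sigma^{(j)}_{h_j}$, bounded by $2\epsilon$, which can be tightened to $\epsilon$ by replacing the initial tolerance $\epsilon$ by $\epsilon/2$ throughout. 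Equal base measures $p(I_{1,j}) = 1/n$ and consecutive heights $h$, $h+1$, $h+2$ hold by construction, and splitting the exceptional set as in the PUB lemma ensures that $f$ takes infinitely many values on both the tower union and its complement.

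The main obstacle I expect is ensuring that the greedy constructions for columns 1 and 2 do not exhaust all pieces of one sign, leaving column 3 (which balances automatically by arithmetic) starved of the required positive or negative integrals. This is handled by taking $n$ (and hence $h$) much larger than $N$ so that each sign class retains abundant pieces after the first $2h+1$ selections; the needed quantitative margin follows from the positivity and near-uniform distribution of the $p_i$ guaranteed by the PUB construction together with the mean-zero hypothesis on $f$. Once this margin is in place, the rest of the argument is a direct triple application of the machinery of Lemma \ref{ucl}.
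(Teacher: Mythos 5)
There is a genuine gap, and it sits exactly where the new difficulty of this lemma (relative to Lemma \ref{ucl}) lies. Your plan builds three \emph{independent} columns by running the greedy selection three times, and for condition (\ref{nbig4}) you need each column's total integral $\sigma^{(j)}_{h_j}$ to be smaller than $\epsilon/n$ (level refinement only equalizes the pointwise column sums to $n\sigma^{(j)}_{h_j}$; it cannot shrink them). You justify this by saying the ``fine granularity of the PUB'' lets you pick a last piece bringing the running sum below $\epsilon/n$. The PUB gives no such thing: it controls the oscillation of $f$ \emph{within} each piece, not the spacing of the piece-integrals across pieces. If the essential range of $f$ clusters near $\pm\|f\|_\infty$ (say $f$ equals $+1$ on half of $A$ and takes infinitely many values very near $-1$ on the other half), then every non-exceptional piece has integral within a tiny amount of $\pm 1/n$, and a column of odd height built from such pieces has $n|\sigma^{(j)}_{h_j}|$ close to an odd integer, hence at least roughly $1$, not less than $\epsilon$. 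Since the heights $h$, $h+1$, $h+2$ cannot all be even, at least one of your three columns violates (\ref{nbig4}), and no choice of ``last piece'' repairs it. In Lemma \ref{ucl} this problem does not arise because the single column uses \emph{all} the non-exceptional pieces and their total is arranged to be exactly zero; splitting into three columns destroys that exact cancellation, and that is the whole problem this lemma has to solve.

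The paper's proof takes a different route precisely to avoid this. It builds one balanced column as in Lemma \ref{ucl}, then cuts the base level $I_1$ into three pieces and stacks the leftmost third on top of the rightmost third (a Chacon-type move), producing the three staggered subcolumns of consecutive heights as linked pieces of a single column. The price is that one subcolumn traverses an extra level, so its partial sums are thrown off by as much as $\|f\|_\infty$ --- this is why the bound in (\ref{nbig3}) is $3\|f\|_\infty$ rather than your $\|f\|_\infty+\epsilon$. The essential new ingredient is the correction of this offset: using the hypothesis that $f$ takes essentially infinitely many values, one selects $D_1,D_2\subset A_{i_0}\setminus B_{i_0}$ with $f(x_1)<f(x_2)-d$ for $x_1\in D_1$, $x_2\in D_2$, manufactures $\lfloor 4\|f\|_\infty/d\rfloor$ pieces that are half in $D_1$ and half in $D_2$, and uses level-refinement swaps on these paired pieces to ``tamp down'' the discrepancy in increments of at least $d$ until (\ref{nbig4}) is restored. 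Your proposal never uses the infinitely-many-values hypothesis for the tower itself and never needs $M=3$, which is a symptom of the missing mechanism; moreover, the later construction (the staggered unions $I_i^k=\bigcup_j I^k_{i+j,j+1}$ and Chacon's eigenvalue argument) requires the three columns to be the linked subcolumns of one cut-and-stacked column, which three unrelated towers would not provide.
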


\noindent
{\bf Proof:}
The proof is similar to Lemma \ref{ucl}.  Since $f$ takes on essentially infinitely many values
on $A_{i_0} \setminus B_{i_0}$, there exist $d > 0$ and disjoint sets $D_1,D_2 \subset A_{i_0} \setminus B_{i_0}$
with equal positive measure such that for $x_1\in D_1$ and $x_2\in D_2$,
\[
f(x_1) < f(x_2) - d .
\]
Choose $r\in \natural$ such that
\[
r(q+1) > \frac{4 ||f||_{\infty} }{d p(D_1)} .
\]
Let $n_1 = r(q+1)$. For $i\neq i_0$, partition
$A_i \setminus B_i$ into $rp_i$ sets of measure ${1} / {n_1}$.
Choose $\lfloor \frac{4 ||f||_{\infty}}{d} \rfloor$ subsets in $D_1$
of measure ${1} / {2n_1}$, and
$\lfloor \frac{4 ||f||_{\infty}}{d} \rfloor$ subsets in $D_2$
of measure ${1} / {2n_1}$. Pair each subset from $D_1$
with a distinct subset from $D_2$. This forms
$\lfloor \frac{4 ||f||_{\infty}}{d} \rfloor$ subsets of measure
${1} / {n_1}$.  Label these subsets $A_{i_0}(j)$
for $1 \leq j \leq \lfloor \frac{4 ||f||_{\infty}}{d} \rfloor$, and
for $\lfloor \frac{4 ||f||_{\infty}}{d} \rfloor < j \leq r p_{i_0}$,
choose disjoint sets $A_{i_0}(j)$ arbitrarily of equal measure
from the remainder of $A_{i_0} \setminus B_{i_0}$.

Choose $I_1 =A_i(j) \notin A_{i_0}$. Cut $I_1$ into 3 subsets
of equal width and stack the left most third on top of the right most
third. Apply the same greedy stacking algorithm as in Lemma \ref{ucl}.
Level refinement on the sets $A_{i_0}(j)$
for $1 \leq j \leq \lfloor \frac{4 ||f||_{\infty}}{d} \rfloor$, will be able
to "tamp down" the differing values produced from the initial
cut \& stack on $I_1$.
Note $h_1 = r \sum\limits_{i=0}^{2m-1} p_i - 1$.
We leave the rest of the technical details to the reader. $\Box$

\subsection{Big Tower, Little Tower}

An extension of Lemma \ref{wucl} is needed
to construct the final transformation and transfer function.
It can be proved in the same manner as Lemma \ref{wucl}.
Given a TUB, its width is $p (I_1)$ and
given a W-TUB, its width is $\sum\limits_{j=1}^{w} p (I_{1,j})$.

\begin{lem}
\label{ucle}
Let $( X, \B , p )$ be a Lebesgue probability space and $A_1,A_2$ be disjoint measurable sets.
Suppose $f_1: A_1 \to \real$ and $f_2: A_2 \to \real$ are each bounded, measurable, mean-zero
and assume essentially infinitely many values.
Given $N \in \natural$ and $\epsilon > 0$, there exist $n > N$,
and $h_1,h_2 \in \natural$
such that each $f_j$ admit a TUB($\epsilon, h_j$) of width ${1} / {n}$,
and such that $f_j$ takes on infinitely many values on its TUB and
on the complement of its TUB.
\end{lem}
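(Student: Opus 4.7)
\medskip
\noindent\textbf{Proof proposal for Lemma \ref{ucle}.}
The plan is to run the construction of Lemma \ref{ucl} in parallel for $f_1$ and $f_2$, coordinating the two so that they share a common denominator $n$ which simultaneously controls the widths of both towers. Thus the main task is to reprove Lemma \ref{ucl} ``with one extra parameter,'' rather than to invoke it as a black box.

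First I would choose a common smallness parameter $\epsilon' \ll \epsilon/(\|f_1\|_\infty+\|f_2\|_\infty+1)$ and apply the PUB step from the proof of Lemma \ref{ucl} to $f_1$ on $A_1$ and to $f_2$ on $A_2$ separately, producing for each $j=1,2$ a level decomposition $\{A_i^{(j)}\}_{i=0}^{2m_j-1}$ coming from slicing the range of $f_j$ into $2m_j$ thin intervals of width less than $\epsilon'$. At this stage I would also select, for each $j$, a distinguished index $i_0(j)$ and two subsets $E_0^{(j)},E_1^{(j)}\subset A_{i_0(j)}^{(j)}$ carrying infinitely many values of $f_j$ and straddling its average, exactly as in Lemma \ref{ucl}. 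This guarantees at the end that both the tower and its complement in $A_j$ retain infinitely many values of $f_j$.

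The main step, and the one I expect to require the most care, is the simultaneous Diophantine approximation. Instead of applying \cite{Cas57} to a single vector $(p(A_i))_i$ as in Lemma \ref{ucl}, I would apply it to the concatenated vector
\[
\bigl(p(A_0^{(1)}),\dots,p(A_{2m_1-1}^{(1)}),\,p(A_0^{(2)}),\dots,p(A_{2m_2-1}^{(2)})\bigr).
\]
This produces a single $q\in\mathbb{N}$, with $q>N$ and as large as needed to satisfy the analogues of the inequalities in Lemma \ref{ucl}, and integers $p_i^{(j)}$ with $|q\,p(A_i^{(j)})-p_i^{(j)}|<q^{-1/(2m_1+2m_2)}$ for all admissible $(i,j)$. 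Setting $n=q+1$, every level $A_i^{(j)}$ can be split into $p_i^{(j)}$ pieces of width exactly $1/n$ plus a small exceptional remainder $B_i^{(j)}$ whose integral against $f_j$ is tuned, using the infinitely-many-values content at $i_0(j)$, so that $\sum_i \int_{B_i^{(j)}} f_j\,dp=0$; this is the same balancing trick used in Lemma \ref{ucl}, done independently on each $A_j$.

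With the common width $1/n$ in hand, I would then carry out, separately for $j=1,2$, the greedy stacking argument and the level refinement argument from the proof of Lemma \ref{ucl}: the greedy stacking assembles the $A_i^{(j)}\setminus B_i^{(j)}$-pieces into a tower of height $h_j=\sum_i p_i^{(j)}$, yielding bounded partial sums of $f_j$ along the tower and a total integral of $0$ (giving \eqref{nbig1}--\eqref{nbig2} for each $j$), and the level-refinement argument produces a measure preserving map $\tau_j:I_k^{(j)}\to I_{k+1}^{(j)}$ for which the top sum is $\epsilon$-small pointwise. Finally, since the infinitely-many-values content was split between $E_0^{(j)}\cup E_1^{(j)}$ (which lands in the exceptional remainder, hence in the complement of the tower) and $A_{i_0(j)}^{(j)}\setminus(E_0^{(j)}\cup E_1^{(j)})$ (which lands in the tower itself), the last conclusion of the lemma is automatic. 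The tower heights $h_1,h_2$ need not agree, but the common width $1/n$ is exactly what is asserted.
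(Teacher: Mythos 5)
Your proposal is correct and follows exactly the route the paper intends: the paper offers no detailed proof of Lemma \ref{ucle}, saying only that it ``can be proved in the same manner'' as the earlier tower lemmas and that ``the generality of the Diophantine approximation allows the simultaneous construction of towers with the same width,'' which is precisely your idea of applying simultaneous Diophantine approximation to the concatenated vector of level measures to obtain a common $n$. Your write-up actually supplies more detail than the paper does, and the details (the exponent $-1/(2m_1+2m_2)$, independent balancing of the exceptional sets, separate greedy stacking and level refinement for each $f_j$) are all consistent with the machinery of Lemma \ref{ucl}.
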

Lemma \ref{ucle} says that we can build two towers of the same width for two
different functions with disjoint supports.
The generality of the Diophantine approximation
allows the simultaneous construction of towers with the same width.
We can generalize the construction of W-TUB in the same manner.


The previous lemmas will be invoked iteratively to produce a final transformation $\tau$.
Lemma \ref{ucl} is used at the initial step. Lemma \ref{ucle} is used in the following
steps to define the induced transformation $\tau_A$ on most of the top portion of the tower
where $\tau$ has not been defined yet. Here, we point out that this may be done
in such a way that the levels of the towers approximate sets from a generating,
refining sequence of partitions. To see how to do this, in lemma \ref{ucl},
first choose sets from a fine partition that approximate
the sets $A_i$ (in lemma \ref{ucl}). Then modify these sets on a set of small measure
to produce a uniform partition of each set $A_i$. This procedure can be carried out
for both lemmas \ref{ucl} and \ref{ucle}.
This is used in the construction below to guarantee ergodicity
and subsequently weak mixing.

\subsection{Weak Mixing Construction}
For convenience, given a W-TUB($\epsilon, 3, h, 3$) with subintervals $I_{i,j}^k$, define
$I_i^k = \bigcup_{j=0}^{w-1} I_{i+j,j+1}^k$.
The following iterative technique is used to produce a weak mixing map $\tau$
and an $L_\infty$ function $g$ such that $f = g\circ \tau - g$.
Initially, Lemma \ref{wucl} is used to produce disjoint sets $X_1$ and $Y_1$ such that
$X = X_1 \cup Y_1$.
The set $X_1$ corresponds to the W-TUB produced from Lemma \ref{wucl}
and $Y_1$ its complement.
Lemma \ref{ucle} is used to decompose the part of $X_1$ where $\tau$ is not defined
into $X_1^1$ and $X_1^2$, and $Y_1$ is decomposed into $Y_1 = Y_1^1 \cup Y_1^2$.
Then $Y_1^1$ is stacked  on top of newly stacked $X_1^1$ to produce $X_2$.
Also, $Y_2 = X_1^2 \cup Y_1^2$. Once again, Lemma \ref{wucl}
is applied to decompose $X_2$ into
$X_2^1 \cup X_2^2$.  Then set $X_3 = X_2^1$ and $Y_3 = Y_2 \cup X_2^2$.

Choose sequences $\epsilon_i > 0$ and $N_i \in \natural$ for $i=0,1,\ldots$
such that
\begin{eqnarray}
\label{param1}
\sum\limits_{i=0}^{\infty} (\epsilon_i + \frac{1}{N_i}) < \infty .
\end{eqnarray}
By Lemma \ref{wucl}, there exist $n_0 > N_0$, $H_0\in \natural$,
and a W-TUB($\epsilon_0,3,H_0,3$) tower for $f$.

Let $X_1 = \bigcup_{j=1}^{3} \bigcup_{i=1}^{H_0+j-1} I_{i,j}^0$ be its W-TUB decomposition,
and $Y_1 = X\setminus X_1$.
Define $f_1^1$ on $I_{H_0 + j, j+1}^0$ for $j=0,1,2$ by
\[
f_1^1(x) = \sum\limits_{i=0}^{H_0 + j -1} f(\tau_0^{-i}x),
\]
and $f_2^1 : Y_1 \to \real$ such that $f_2^1(x) = f(x)$.
By Lemma \ref{ucle}, if we let $A_1 = \bigcup_{j=0}^{2}I_{H_0 + j, j+1}^0$
and $A_2 = Y_1$, then we generate $n_1 > N_1$ and TUBs
of width $\frac{1}{n_1}$.
There exist the following maps:
$$R_1 : I_i^1 \to I_{i+1}^1, i=1,2,\ldots ,H_1-1, I_i^1 \subset I_{H_0}^0$$
and
$$S_1 : J_i^1 \to J_{i+1}^1, i=1,2,\ldots ,h_1, J_i^1 \subset Y_1.$$
Let $\phi_1 : I_{H_1}^1 \to J_1^1$ be a measure preserving map.
Define
\begin{eqnarray*}
\tau_1(x)=
\left\{\begin{array}{ll}
\tau_0(x) & \mbox{if } x\in X_1 \setminus I_{H_0 + j, j}^0 \\
\tau_0^{-H_0+1-j} (R_1(x)) & \mbox{if } \tau^{-H_0 + 2 - j}x\in I_{i,j}^1, i<H_1 \\
S_1(x) & \mbox{if }x\in Y_1\setminus J_{h_1}^1 \\
\phi_1(x) & \mbox{if }x \in I_{H_1}^1 .
\end{array}
\right.
\end{eqnarray*}
The map $\phi_1$ may be obtained by stacking the $J_i^1$'s on top of the $I_i^1$'s.
It maps the top of the $A_1$ TUB to the bottom of the $A_2$ TUB.
The map $\tau_1$ is not defined on $J_{h_1}^1$ and on
$$Y_2 = ((\bigcup_{j=0}^{2}\bigcup_{i=1}^{H_0 + j} I_{i,j+1}^0) \cup (\bigcup_{i=1}^{H_1} I_i^1)
\cup (\bigcup_{i=1}^{h_1} J_i^1))^c .$$
Lemma \ref{wucl} is invoked with $A=J_{h_1}^1$.
We continue the construction in this manner to get the final transformation,
$$\tau(x) = \lim_{i\to \infty} \tau_i(x).$$
For convenience, let $I_{H_k+i}^k = J_i^k$ for $i=1,2,\ldots h_k$ for each $k\in \natural$.
Thus, $\tau$ may be represented by a Rokhlin tower,
$\{ I_i^k : 1\leq i \leq H_k+h_k \}$, with width ${1}/{n_k}$, and such that
$Y_{k+1} = (\bigcup_{i=0}^{H_k+h_k} I_i^k)^c$.

For each $k\in \natural$, $1\leq j\leq H_k$, and $x\in I_j^k$, define
$$g_k(x) = -\sum\limits_{i=1}^{j-1} f(\tau^{-i}x) .$$
Below we show that $\lim_{k\to \infty} g_k(x)$ exists almost everywhere.
This will imply that $g(x)=\lim_{k\to \infty} g_k(x)$ is the transfer function for $f$.

Since the accumulative measure of $J_i^k$ over $i$ for fixed $k$,
goes to zero, we can ignore this part of the space.
Suppose $x\in I_j^k$. There exists $j^{\prime}$ such that
$$g_{k+1}(x) = -\sum\limits_{i=1}^{j^{\prime}} f_1^k(R_1^{-i}\tau^{H_k+h_k-j}x) - \sum\limits_{i=1}^{j-1} f(\tau^{-i}x) .$$
By Lemma \ref{ucle},
$$|\sum\limits_{i=1}^{j^{\prime}} f_1^k(R_1^{-i}\tau^{H_k+h_k-j}x)| < ||f_1^k||_{\infty} + 3\epsilon_k .$$
Also,
$$||f_1^k||_{\infty} < 3\epsilon_{k-1} .$$
Hence,
\begin{eqnarray}
\label{cauchy}
|g_{k+1}(x) - g_k(x)| < 3(\epsilon_k + \epsilon_{k-1}) .
\end{eqnarray}
By (\ref{param1}),
$$g(x) = \lim_{k\to \infty} g_k(x)$$
exists almost everywhere.  Also, $g$ is in $L_\infty$, since $f\in L_\infty$,
and due to conditions (\ref{cauchy}) and (\ref{param1}). $\Box$

Since we are able to construct the W-TUB towers such that the levels
generate the sigma algebra, then we can apply the standard argument
of Chacon (\cite{Chacon}) to show that $\tau$ has
only 1 as an eigenvalue. Since $\tau$ is ergodic, then $\tau$ is weak mixing.
This completes the proof of Theorem \ref{wmcob}. $\Box$

\section{Unresolved Issues}

There are a number of questions that we have not been resolve.  We list them here with references to related sections in this article.

\begin{enumerate}

\item  One general question (a vague one) is this: given $f \in L_r(X), 0 \le r \le \infty$, how often is it a $\tau$-coboundary as $\tau$ varies?
\item If $f \in L_1(X)$ is mean-zero, is the set of $\tau$ such that it is a $\tau$-coboundary with a measurable transfer function necessarily of first category?  Is this also the case if $f$ is just measurable?  See Proposition~\ref{notany} and the remarks following it.
\item Is every mean-zero $f \in L_r(X), 1 \le r \le \infty$, a $\tau$-coboundary for some $\tau$ with transfer function in $L_r(X)$?
What if we allow measurable transfer functions?  Also, even in the first case, are the $\tau$ that work actually dense in $\mathcal T$ in the weak topology?  Or could it be there is only one possible map $\tau$ in some cases?  See Section~\ref{cobotherway} and Section~\ref{wmcobcutstack}.
\item Is every measurable function a $\tau$-coboundary with a measurable transfer function, for some $\tau$?
\item Given $\sigma$, is the set of $\tau$ for which there is no common coboundaries in $L_r(X)$ always of first category?  This was proved only if $\sigma$ is ergodic and rank one.  How are the results on this affected by allowing the transfer functions be just measurable?  See Section~\ref{nojoint}
\item  What is the answer to Baggett's problem?  Also, how do we characterize the case where the rotations for which the function is a coboundary are infinite in number, or even dense?  See Remark~\ref{rot}
\item What are results that distinguish classes of functions $f \in L_r(X)$ that are $\tau$-coboundaries with transfer function $h \in L_s(X)$ with $s\le r$.  Here $\tau$  could be fixed or be  allowed to vary among all of the maps.  For example, given $\tau$ which is ergodic, is there a bounded mean-zero function such that $f$ is a $\tau$-coboundary with a measurable transfer function, but not with an integrable transfer function?  See Remark~\ref{diffclasses}.
\end{enumerate}

\vskip .2in

{\bf Acknowledgements}:  We would like to thank Anush Tserunyan, Mariusz Lemanczyk, and
Isaac Kornfeld for their very helpful input while working on this article.

\medskip

{\small
\parbox[t]{5in}
{T. Adams\\
E-mail: terry@ganita.org\\}
\bigskip

{\small
\parbox[t]{5in}
{J. Rosenblatt (communicating author)\\
Department of Mathematical Sciences\\
Indiana University-Purdue University Indianapolis\\
Indianapolis, IN 46202, USA\\
E-mail: joserose@iupui.edu\\}
\bigskip

\end{document}